\documentclass[11pt, twoside]{article}
\usepackage{amsmath,amsthm,amssymb}
\usepackage{times}
\usepackage{enumerate}
\usepackage{amssymb}
\usepackage{graphicx}
\usepackage[noadjust]{cite}
\usepackage{color}

\pagestyle{myheadings}
\def\titlerunning#1{\gdef\titrun{#1}}
\makeatletter
\def\author#1{\gdef\autrun{\def\and{\unskip, }#1}\gdef\@author{#1}}

\makeatother

\def\keywords#1{\par\medskip
\noindent\textbf{Keywords.} #1}
\def\subjclass#1{\par\smallskip
\noindent\textbf{MSC (2010):} #1}

\newtheorem{thm}{Theorem}[section]

\newtheorem{lem}[thm]{Lemma}

\newtheorem{prop}[thm]{Proposition}


\theoremstyle{definition}

\newtheorem{rem}[thm]{Remark}



\numberwithin{equation}{section}


\textwidth=15cm
\textheight=23cm
\parindent=16pt
\oddsidemargin=0.5cm
\evensidemargin=0.5cm
\topmargin=-1.35cm


\newtheorem*{notations}{Notations}

\DeclareMathOperator*{\esssup}{ess\,sup}
\DeclareMathOperator*{\essinf}{ess\,inf}

\setlength\arraycolsep{2pt}

\makeatletter
\let\@fnsymbol\@alph
\makeatother


\begin{document}

\baselineskip=17pt

\titlerunning{Compactness for $p$-Laplacian}

\title{Compactness results for the $p$-Laplace equation}

\author{Marino Badiale\thanks{Dipartimento di Matematica ``Giuseppe Peano'', Universit\`{a} degli Studi di
Torino, Via Carlo Alberto 10, 10123 Torino, Italy. 
e-mails: \texttt{marino.badiale@unito.it}, \texttt{michela.guida@unito.it}}
\textsuperscript{,}\thanks{Partially supported by the PRIN2012 grant ``Aspetti variazionali e
perturbativi nei problemi di.renziali nonlineari''.}
\ -\ Michela Guida\textsuperscript{a,}\thanks{Member of the Gruppo Nazionale di Alta Matematica (INdAM).}
\ -\ Sergio Rolando\thanks{Dipartimento di Matematica e Applicazioni, Universit\`{a} di Milano-Bicocca,
Via Roberto Cozzi 53, 20125 Milano, Italy. e-mail: \texttt{sergio.rolando@unito.it}}%
\ \textsuperscript{,\,c}
}

\date{
}
\maketitle

\begin{abstract}
Given $1<p<N$ and two measurable functions $V\left(r \right)\geq 0$ and $K\left(r\right)> 0$, $r>0$,
we define the weighted spaces 
\[
W=\left\{ u\in D^{1,p}(\mathbb{R}^{N}):\int_{\mathbb{R}^{N}}V\left( \left|
x\right| \right) \left| u\right|^{p}dx<\infty \right\} ,\quad L_{K}^{q}=L^{q}(\mathbb{R}%
^{N},K\left( \left| x\right| \right) dx)
\]
and study the compact embeddings of the radial subspace of $W$ into $%
L_{K}^{q_{1}}+L_{K}^{q_{2}}$, and thus into $L_{K}^{q}$ ($%
=L_{K}^{q}+L_{K}^{q}$) as a particular case. 
Both exponents $q_{1},q_{2},q$ greater and lower than $p$ are considered.
Our results do not require any compatibility between how the potentials $V$ and $K$ behave at the origin and at infinity, and 
essentially rely on power type estimates of their relative growth, not of the potentials separately.

\keywords{Weighted Sobolev spaces, compact embeddings, unbounded or decaying potentials}
\subjclass{Primary 46E35; Secondary 46E30, 35J92, 35J20}
\end{abstract}

\section{Introduction}

In this paper we pursue the work we made in papers \cite{BGR_I,BGR_II,GR-nls}, 
where we studied embedding and compactness results for weighted Sobolev spaces. These results then made possible to get existence and multiplicity results, by variational methods, for semilinear elliptic equations in $\mathbb{R}^N$. 

In the present paper we face nonlinear elliptic $p$-Laplace equations, that is,
\begin{equation}
-\triangle_p u+V\left( \left| x\right| \right) |u|^{p-1}u=K\left( \left| x\right|
\right) f\left( u\right) \quad \text{in }\mathbb{R}^{N}.  \label{EQ}
\end{equation}
Here $1<p<N$, $f:\mathbb{R}\rightarrow \mathbb{R}$ is a continuous nonlinearity
satisfying $f\left( 0\right) =0$ and $V\geq 0,K>0$ are given potentials.

To study this problem we introduce the space
\[
W:=\left\{ u\in D^{1,p}(\mathbb{R}^{N}):\int_{\mathbb{R}^{N}}V\left( \left|
x\right| \right) |u|^{p}dx<\infty \right\}
\]
equipped with the standard norm
$$\left\| u\right\|^p:=\int_{\mathbb{R}^{N}} \left(\left| \nabla u\right| ^{p}
+V\left( \left| x\right| \right) |u|^{p}\right)dx,$$
%
%
%
and say that $u\in W$ is a \textit{weak solution}\emph{\ }to (\ref{EQ}) if 
\begin{equation}
\int_{\mathbb{R}^{N}}|\nabla u|^{p-2}\nabla u\cdot \nabla h\,dx+\int_{\mathbb{R}^{N}}V\left(
\left| x\right| \right) |u|^{p-2}uh\,dx=\int_{\mathbb{R}^{N}}K\left( \left| x\right|
\right) f\left( u\right) h\,dx\quad \textrm{for all }h\in W.
\label{weak solution}
\end{equation}
The natural approach in studying weak solutions to equation (\ref{EQ}) is variational, since
these solutions are (at least formally) critical points of the Euler
functional 
\begin{equation}
I\left( u\right) :=\frac{1}{p}\left\| u\right\| ^{p}-\int_{\mathbb{R}%
^{N}}K\left( \left| x\right| \right) F\left( u\right) dx,  \label{I:=}
\end{equation}
where $F\left( t\right) :=\int_{0}^{t}f\left( s\right) ds$. Then the problem
of existence is easily solved if $V$ does not vanish at infinity and $K$ is
bounded, because standard embeddings theorems of $W$ and its radial subspace into the weighted Lebesgue space
\[
L_{K}^{q}:=L_{K}^{q}(\mathbb{R}^{N}):=L^{q}(\mathbb{R}^{N},K\left( \left| x\right| \right) dx)
\]
are available (for suitable $q$'s). As we let $V$ and $K$ to vanish, 
or to go to infinity, as $|x| \rightarrow 0$ or $|x|\rightarrow +\infty$, the usual embeddings theorems for Sobolev spaces are not 
available anymore, and new embedding theorems need to be proved. This has been done in several papers: see e.g. the references in 
\cite{BGR_I,BGR_II,GR-nls} for a bibliography concerning the usual Laplace equation, 
and \cite{Anoop,Su12,Cai-Su-Sun,SuTian12,Su-Wang-Will-p,Yang-Zhang,Zhang13,BPR}
for equations involving the $p$-laplacian.

The main novelty of our approach (in \cite{BGR_I,BGR_II} and in the present paper) is two-fold. First, we look for embeddings of $W_r$ (the radial subspace of $W$) not into a single Lebesgue space $L_{K}^{q}$ 
but into a sum of Lebesgue spaces $L_{K}^{q_1}+L_{K}^{q_2}$. This allows to study separately the behaviour of the potentials $V,K$ at $0$ and $\infty$, and to assume different set of hypotheses about these behaviours. Second, we assume hypotheses not on $V$ and $K$ separately but on their ratio, so allowing asymptotic behaviours of general kind for the two potentials.

Thanks to these novelties, our embedding results yield existence of solutions for (\ref{EQ}) in cases 
which are not covered by the previous literature. 
Moreover, one can check that our embeddings are also new in some of the cases already treated in previous papers, thus giving existence results which improve some well-known theorems in the literature.
%
%

In the present paper we limit ourselves to the proof of the compact embeddings, which is the hardest part of the arguments. In a forthcoming paper \cite{BGR_p} we will apply these results to get existence and multiplicity results for $p$-laplacian equations like   (\ref{EQ}).

This paper is organized as follows. In Section \ref{SEC:MAIN} we state our
main results: a general result concerning the embedding properties of $W_r$  into $L_{K}^{q_{1}}+L_{K}^{q_{2}}$ (Theorem \ref{THM(cpt)})
and some explicit conditions ensuring that the embedding is compact
(Theorems \ref{THM0}, \ref{THM1}, \ref{THM2} and \ref{THM3}). The general
result is proved in Section \ref{SEC:1}, the explicit conditions in Section 
\ref{SEC:2}. 
The Appendix is devoted to some detailed computations, displaced from Section \ref{SEC:2} for sake of
clarity.

\begin{notations}
We end this introductory section by collecting
some notations used in the paper.

\noindent $\bullet $ For every $R>0$, we set $B_{R}:=\left\{ x\in \mathbb{R}%
^{N}:\left| x\right| <r\right\} $.

\noindent $\bullet $ For any subset $A\subseteq \mathbb{R}^{N}$, we denote $%
A^{c}:=\mathbb{R}^{N}\setminus A$. If $A$ is Lebesgue measurable, $\left|
A\right| $ stands for its measure.


\noindent $\bullet $ By $\rightarrow $ and $\rightharpoonup $ we
respectively mean \emph{strong} and \emph{weak }convergence.

\noindent $\bullet $ $\hookrightarrow $ denotes \emph{continuous} embeddings.

\noindent $\bullet $ $C_{\mathrm{c}}^{\infty }(\Omega )$ is the space of the
infinitely differentiable real functions with compact support in the open
set $\Omega \subseteq \mathbb{R}^{d}$.

\noindent $\bullet $ If $1\leq p\leq \infty $ then $L^{p}(A)$ and $L_{%
\mathrm{loc}}^{p}(A)$ are the usual real Lebesgue spaces (for any measurable
set $A\subseteq \mathbb{R}^{d}$). If $\rho :A\rightarrow \left( 0,+\infty
\right) $ is a measurable function, then $L^{p}(A,\rho \left( z\right) dz)$
is the real Lebesgue space with respect to the measure $\rho \left( z\right)
dz$ ($dz$ stands for the Lebesgue measure on $\mathbb{R}^{d}$).

\noindent $\bullet $ $p^{\prime }:=p/(p-1)$ is the H\"{o}lder-conjugate
exponent of $p.$

\noindent$\bullet $  For $1<p<N$, $D^{1,p}(\mathbb{R}^{N})=\{u\in L^{p^{*}}(\mathbb{R}%
^{N}):\nabla u\in L^{2}(\mathbb{R}^{N})\}$ 
is the usual Sobolev space, which identifies with the completion of $C_{\mathrm{c}}^{\infty }(%
\mathbb{R}^{N})$ with respect to the norm of the gradient; $D_{\mathrm{rad}%
}^{1,p}(\mathbb{R}^{N})$ is the radial subspace of $D^{1,p}(\mathbb{R}^{N})$; $%
D_{0}^{1,p}\left( B_{R}\right) $ is closure of $C_{\mathrm{c}}^{\infty
}\left( B_{R}\right) $ in $D^{1,p}(\mathbb{R}^{N})$.

\noindent  $\bullet $  For $1<p<N$, $p^{*}:=pN/\left( N-p\right) $ is the critical exponent
for the Sobolev embedding in dimension $N$. 

\end{notations}

\section{Main results \label{SEC:MAIN}}

\par \noindent We consider 
$1<p<N$ and we assume the following hypotheses on $V,K$:

\begin{itemize}
\item[$\left( \mathbf{V}\right) $]  $V:\left( 0,+\infty \right) \rightarrow
\left[ 0,+\infty \right] $ is a measurable function such that $V\in
L^{1}\left( \left( r_{1},r_{2}\right) \right) $ for some $r_{2}>r_{1}>0;$

\item[$\left( \mathbf{K}\right) $]  $K:\left( 0,+\infty \right) \rightarrow
\left( 0,+\infty \right) $ is a measurable function such that $K\in L_{%
\mathrm{loc}}^{s}\left( \left( 0,+\infty \right) \right) $ for some 
$s>1$.

\end{itemize}

\noindent Let us define the following function spaces
\begin{equation}
W:= D^{1,p}(  \mathbb{R}^N ) \cap L^p (\mathbb{R}^N,V(|x|)dx), 
\quad W_r := D^{1,p}_{\mathrm{rad}} (  \mathbb{R}^N ) \cap L^p (\mathbb{R}^N,V(|x|)dx)
\label{spaces}
\end{equation}
and let $||u||$ be the standard norm in $W$ (and $W_r$).
Assumption $\left( \mathbf{V}\right) $ implies that the spaces 
$W$ and $W_r$ are nontrivial, while hypothesis $\left( \mathbf{K}\right) $ 
ensures that $W_r$ is compactly embedded into the weighted Lebesgue space $L_{K}^{q}(B_{R}\setminus B_{r})$
for every $1<q<\infty $ and $R>r>0$ (cf. Lemma \ref{Lem(corone)} below). In
what follows, the summability assumptions in $\left( \mathbf{V}\right) $ and 
$\left( \mathbf{K}\right) $ will not play any other role than this.

Given $V$ and $K$, we define the following functions of $R>0$ and $q>1$: 
\begin{eqnarray}
\mathcal{S}_{0}\left( q,R\right)&:=&
\sup_
{u\in W_r,\,
\left\| u\right\| =1  }
\int_{B_{R}}K\left( \left| x\right| \right)
\left| u\right| ^{q}dx,  \label{S_o :=}
\\
\mathcal{S}_{\infty }\left( q,R\right)&:=&
\sup_
{u\in W_r,\,
\left\| u\right\| =1  }
\int_{\mathbb{R}%
^{N}\setminus B_{R}}K\left( \left| x\right| \right) \left| u\right| ^{q}dx.
\label{S_i :=}
\end{eqnarray}
Clearly $\mathcal{S}_{0}\left( q,\cdot \right) $ is nondecreasing, $\mathcal{%
S}_{\infty }\left( q,\cdot \right) $ is nonincreasing and both of them can
be infinite at some $R$.

Our first result concerns the embedding properties of $W_r$
into $L_{K}^{q_{1}}+L_{K}^{q_{2}}$ and relies on assumptions which are quite
general, sometimes also sharp (see claim (iii)), but not so easy to check.
More handy conditions ensuring these general assumptions will be provided by
the next results. Some reference on the space $L_{K}^{q_{1}}+L_{K}^{q_{2}}$
will be given in Section \ref{SEC:1}.

\begin{thm}
\label{THM(cpt)} Let $1<p<N$, let $V$, $K$ be as in $\left( \mathbf{V}%
\right) $, $\left( \mathbf{K}\right) $ and let $q_{1},q_{2}>1$.

\begin{itemize}
\item[(i)]  If 
\begin{equation}
\mathcal{S}_{0}\left( q_{1},R_{1}\right) <\infty \quad \text{and}\quad 
\mathcal{S}_{\infty }\left( q_{2},R_{2}\right) <\infty \quad \text{for some }%
R_{1},R_{2}>0,  
\tag*{$\left( {\cal S}_{q_{1},q_{2}}^{\prime }\right) $}
\end{equation}
then $W_r$ is continuously embedded into $L_{K}^{q_{1}}(\mathbb{R}^{N})+L_{K}^{q_{2}}(\mathbb{R}^{N})$.

\item[(ii)]  If 
\begin{equation}
\lim_{R\rightarrow 0^{+}}\mathcal{S}_{0}\left( q_{1},R\right)
=\lim_{R\rightarrow +\infty }\mathcal{S}_{\infty }\left( q_{2},R\right) =0, 
\tag*{$\left({\cal S}_{q_{1},q_{2}}^{\prime \prime }\right) $}
\end{equation}
then $W_r$ is compactly embedded into $L_{K}^{q_{1}}(\mathbb{R}^{N})+L_{K}^{q_{2}}(\mathbb{R}^{N})$.

\item[(iii)]  If $K\left( \left| \cdot \right| \right) \in L^{1}(B_{1})$ and 
$q_{1}\leq q_{2}$, then conditions $\left( \mathcal{S}_{q_{1},q_{2}}^{\prime
}\right) $ and $\left( \mathcal{S}_{q_{1},q_{2}}^{\prime \prime }\right) $
are also necessary to the above embeddings.
\end{itemize}
\end{thm}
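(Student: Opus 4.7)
The plan is to handle (i) and (ii) together via a three-piece radial decomposition of $u$, and to address (iii) using an equivalent norm characterization of $L_K^{q_1}+L_K^{q_2}$ combined with the radial Sobolev decay.

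For claim (i), given $u\in W_r$, I would write $u=u\chi_{B_{r_1}}+u\chi_{B_{r_2}\setminus B_{r_1}}+u\chi_{B_{r_2}^c}$ with $r_1\leq r_2$, $r_1\leq R_1$, $r_2\geq R_2$ (or a two-piece split at any $r\in[R_2,R_1]$ when $R_1\geq R_2$). The first and third pieces sit in $L_K^{q_1}$ and $L_K^{q_2}$ with norms controlled by $\mathcal{S}_0(q_1,r_1)^{1/q_1}\|u\|$ and $\mathcal{S}_\infty(q_2,r_2)^{1/q_2}\|u\|$, respectively. The middle piece, supported on the annulus, lies in $L_K^{q_1}$ by Lemma \ref{Lem(corone)} and can be absorbed into the first term, giving $\|u\|_{L_K^{q_1}+L_K^{q_2}}\leq C\|u\|$.

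For claim (ii), given $u_n\rightharpoonup 0$ in $W_r$, I would show $u_n\to 0$ in $L_K^{q_1}+L_K^{q_2}$. Fix $\varepsilon>0$; use $(\mathcal{S}_{q_1,q_2}'')$ to pick $r_1$ small and $r_2$ large so that both $\mathcal{S}_0(q_1,r_1)$ and $\mathcal{S}_\infty(q_2,r_2)$ are less than $\varepsilon$, then apply the same three-piece splitting. The outer pieces have $L_K^{q_i}$ norms bounded by $\varepsilon^{1/q_i}\sup_n\|u_n\|$; the middle piece, on a fixed annulus, converges strongly to $0$ in $L_K^{q_1}$ by Lemma \ref{Lem(corone)}. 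Summing yields $\|u_n\|_{L_K^{q_1}+L_K^{q_2}}\to 0$.

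For claim (iii), the key tool is the equivalent norm (valid for $q_1\leq q_2$)
\[
\|u\|_{L_K^{q_1}+L_K^{q_2}}\asymp\Bigl(\int_{\{|u|>1\}}K(|x|)|u|^{q_1}dx\Bigr)^{1/q_1}+\Bigl(\int_{\{|u|\leq 1\}}K(|x|)|u|^{q_2}dx\Bigr)^{1/q_2}.
\]
To get $\mathcal{S}_0(q_1,R)<\infty$ for $R\leq 1$, split $\int_{B_R}K|u|^{q_1}$ at $|u|=1$: the large-$|u|$ part is bounded by the sum-norm, hence by $\|u\|$, while the small-$|u|$ part is bounded by $\int_{B_R}K(|x|)dx<\infty$ since $K(|\cdot|)\in L^1(B_1)$. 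To get $\mathcal{S}_\infty(q_2,R)<\infty$ for large $R$, invoke the radial Sobolev decay $|u(x)|\leq C|x|^{-(N-p)/p}\|u\|$, so for $\|u\|=1$ and $R$ beyond a universal constant one has $|u|\leq 1$ on $B_R^c$, and $\int_{B_R^c}K|u|^{q_2}$ lies entirely in the small-$|u|$ regime controlled by the sum-norm. For the compact case, argue by contradiction: if $\mathcal{S}_0(q_1,R_n)\geq\delta$ for some $R_n\to 0^+$, take realizers $u_n$ with $\|u_n\|=1$, extract $u_n\rightharpoonup u$ in $W_r$ and $u_n\to u$ in $L_K^{q_1}+L_K^{q_2}$; then the equivalent norm and $K(|\cdot|)\in L^1(B_1)$ force $\int_{B_{R_n}}K|u|^{q_1}\to 0$ and $\int_{B_{R_n}}K|u_n-u|^{q_1}\to 0$, contradicting the choice of $u_n$. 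The statement for $\mathcal{S}_\infty$ is symmetric via radial decay. The main obstacle I expect is in (iii): setting up the equivalent norm rigorously and managing the asymmetry between the two regimes---at the origin, $L^1$-integrability of $K(|\cdot|)$ controls the small-$|u|$ tail; at infinity, radial Sobolev decay replaces integrability to rule out the large-$|u|$ tail.
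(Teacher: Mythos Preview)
Your argument for parts (i) and (ii) is correct and essentially identical to the paper's: the same three-region split controlled by $\mathcal{S}_0$, $\mathcal{S}_\infty$, and Lemma~\ref{Lem(corone)} on the intermediate annulus, with the conclusion packaged via Proposition~\ref{Prop(->0)}.

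For part (iii) the strategies diverge, and your key tool is misstated. The claimed two-sided equivalence
\[
\|u\|_{L_K^{q_1}+L_K^{q_2}}\asymp\Bigl(\int_{\{|u|>1\}}K|u|^{q_1}\Bigr)^{1/q_1}+\Bigl(\int_{\{|u|\leq 1\}}K|u|^{q_2}\Bigr)^{1/q_2}
\]
is \emph{false}: only the inequality $\leq$ holds. For a counterexample take $K\equiv 1$, $q_1<q_2$, and $u=2\chi_E$ with $|E|=M$; the right side is $\sim M^{1/q_1}$, while $\|u\|_{L^{q_1}+L^{q_2}}\leq \|u\|_{L^{q_2}}\sim M^{1/q_2}$, so the ratio blows up as $M\to\infty$. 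Every place where you invoke the reverse inequality (``the large-$|u|$ part is bounded by the sum-norm'', and the analogous step for $\mathcal{S}_\infty$) is therefore unjustified as written.

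Your approach can be repaired. The right-hand side above is not an equivalent norm but the modular of the Orlicz space $L^\Phi_K$ with $\Phi(t)=\min(t^{q_1},t^{q_2})$, which coincides with $L_K^{q_1}+L_K^{q_2}$ as a set; since $\Phi$ is doubling, modular boundedness and modular convergence are equivalent to norm boundedness and norm convergence, which is exactly what your argument actually uses. Alternatively, one can prove directly the inhomogeneous bound
\[
\int_{\{|u|>1\}}K|u|^{q_1}+\int_{\{|u|\leq 1\}}K|u|^{q_2}\leq C\bigl(\|u\|_{L_K^{q_1}+L_K^{q_2}}^{q_1}+\|u\|_{L_K^{q_1}+L_K^{q_2}}^{q_2}\bigr),
\]
which suffices on the unit sphere of $W_r$ and for sequences converging to zero. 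The paper avoids this Orlicz detour entirely and instead quotes two ready-made facts from \cite{BPR} (Proposition~\ref{Prop(L+L)} here): the embedding $L_K^{q_1}+L_K^{q_2}\hookrightarrow L_K^{q_1}(E)$ whenever $\int_E K<\infty$, which handles $\mathcal{S}_0$ in one stroke, and an inhomogeneous $L^\infty$-based estimate (\ref{PropLL:1})--(\ref{PropLL:2}) on sets where $|u|\leq 1$, which handles $\mathcal{S}_\infty$. Both routes lead to the same place, but you should either correct the norm/modular distinction or switch to the paper's propositions.
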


Observe that, of course, $(\mathcal{S}_{q_{1},q_{2}}^{\prime \prime })$
implies $(\mathcal{S}_{q_{1},q_{2}}^{\prime })$. Moreover, these assumptions
can hold with $q_{1}=q_{2}=q$ and therefore Theorem \ref{THM(cpt)} also
concerns the embedding properties of $W_r$ into $L_{K}^{q}$, $1<q<\infty $.
\smallskip

We now look for explicit conditions on $V$ and $K$ implying $(\mathcal{S}%
_{q_{1},q_{2}}^{\prime \prime })$ for some $q_{1}$ and $q_{2}$. More
precisely, we will ensure $(\mathcal{S}_{q_{1},q_{2}}^{\prime \prime })$
through a more stringent condition involving the following functions of $R>0$
and $q>1$: 
\begin{eqnarray}
\mathcal{R}_{0}\left( q,R\right)&:=&
\sup_
{
u\in H_{V,\mathrm{r}}^{1},\,h\in H_{V}^{1},\,
\left\| u\right\| =\left\| h\right\| =1 
}%
\,\int_{B_{R}}K\left( \left| x\right| \right) \left| u\right| ^{q-1}\left|
h\right| dx,  \label{N_o} \\
\mathcal{R}_{\infty }\left( q,R\right)&:= &
\sup_
{
u\in H_{V,\mathrm{r}}^{1},\,h\in H_{V}^{1},\, \left\| u\right\| =\left\| h\right\| =1  
}
\,\int_{\mathbb{R}^{N}\setminus B_{R}}K\left( \left| x\right| \right) \left|
u\right| ^{q-1}\left| h\right| dx.  \label{N_i}
\end{eqnarray}
Note that $\mathcal{R}_{0}\left( q,\cdot \right) $ is nondecreasing, $%
\mathcal{R}_{\infty }\left( q,\cdot \right) $ is nonincreasing and both can
be infinite at some $R$. Moreover, for every $\left( q,R\right) $ one has $%
\mathcal{S}_{0}\left( q,R\right) \leq \mathcal{R}_{0}\left( q,R\right) $ and 
$\mathcal{S}_{\infty }\left( q,R\right) \leq \mathcal{R}_{\infty }\left(
q,R\right) $, so that $(\mathcal{S}_{q_{1},q_{2}}^{\prime \prime })$ is a
consequence of the following, stronger condition: 
\begin{equation}
\lim_{R\rightarrow 0^{+}}\mathcal{R}_{0}\left( q_{1},R\right)
=\lim_{R\rightarrow +\infty }\mathcal{R}_{\infty }\left( q_{2},R\right) =0. 
\tag*{$\left( {\cal R}_{q_{1},q_{2}}^{\prime \prime }\right) $}
\end{equation}
In Theorems \ref{THM0} and \ref{THM3} we will find ranges of exponents $%
q_{1} $ such that $\lim_{R\rightarrow 0^{+}}\mathcal{R}_{0}\left(q_{1},R\right)$ $=0$.
In Theorems \ref{THM1} and \ref{THM2} we will do the same for exponents $q_{2}$ such that
$\lim_{R\rightarrow +\infty}\mathcal{R}_{\infty }\left( q_{2},R\right) =0$.
Condition $(\mathcal{R}_{q_{1},q_{2}}^{\prime \prime })$ 
then follows by joining Theorem \ref{THM0} or \ref{THM3} with Theorem \ref{THM1} or \ref{THM2}.

For $\alpha \in \mathbb{R}$ and $\beta \in \left[ 0,1\right] $, define two
functions $\alpha ^{*}\left( \beta \right) $ and $q^{*}\left( \alpha ,\beta
\right) $ by setting 
\[
\alpha ^{*}\left( \beta \right) :=\max \left\{ p\beta -1-\frac{p-1}{p}N ,-\left(
1-\beta \right) N\right\} =\left\{ 
\begin{array}{ll}
p\beta -1-\frac{p-1}{p}N   \quad \smallskip & \text{if }0\leq \beta \leq \frac{1}{p%
} \\ 
-\left( 1-\beta \right) N & \text{if }\frac{1}{p}\leq \beta \leq 1
\end{array}
\right. 
\]
and 
\[
q^{*}\left( \alpha ,\beta \right) :=p\frac{\alpha -p\beta +N}{N-p}. 
\]
Note that $\alpha ^{*}\left( \beta \right) \leq 0$ and $\alpha ^{*}\left(
\beta \right) =0$ if and only if $\beta =1$.

The first two Theorems \ref{THM0} and \ref{THM1} only rely on a power type
estimate of the relative growth of the potentials and do not require any
other separate assumption on $V$ and $K$ than $\left( \mathbf{V}\right) $
and $\left( \mathbf{K}\right) $, including the case $V\left( r\right) \equiv
0$ (see Remark \ref{RMK: suff12}.\ref{RMK: suff12-V^0}).

\begin{thm}
\label{THM0}
Let $1<p<N$ and let $V$, $K$ be as in $\left( \mathbf{V}\right) $, $\left( \mathbf{K}\right) $.
Assume that there exists $R_{1}>0$ such that $V\left( r\right) <+\infty $ almost everywhere in $(0,R_1)$ and
\begin{equation}
\esssup_{r\in \left( 0,R_{1}\right) }\frac{K\left( r\right) }{%
r^{\alpha _{0}}V\left( r\right) ^{\beta _{0}}}<+\infty \quad \text{for some }%
0\leq \beta _{0}\leq 1\text{~and }\alpha _{0}>\alpha ^{*}\left( \beta
_{0}\right) .  \label{esssup in 0}
\end{equation}
Then $\displaystyle \lim_{R\rightarrow 0^{+}}\mathcal{R}_{0}\left(
q_{1},R\right) =0$ for every $q_{1}\in \mathbb{R}$ such that 
\begin{equation}
\max \left\{ 1,p\beta _{0}\right\} <q_{1}<q^{*}\left( \alpha _{0},\beta
_{0}\right) .  \label{th1}
\end{equation}
\end{thm}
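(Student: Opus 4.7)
My strategy is to estimate the integrand in \eqref{N_o} pointwise via \eqref{esssup in 0}, split the resulting product by a three-factor H\"older inequality in such a way that two of the three factors are absorbed by the whole norms $\|u\|=\|h\|=1$, and then reduce the remaining factor to a one-dimensional integral $\int_{0}^{R}r^{\gamma}\,dr$ by inserting the radial decay estimate
\[
|u(x)|\le C_{N,p}\,|x|^{-(N-p)/p}\,\|\nabla u\|_{L^{p}(\mathbb R^{N})},\qquad u\in D^{1,p}_{\mathrm{rad}}(\mathbb R^{N}),
\]
valid a.e.\ on $\mathbb R^{N}\setminus\{0\}$. The condition $q_{1}<q^{*}(\alpha_{0},\beta_{0})$ will appear exactly as the requirement $\gamma>-1$, and the resulting bound $\int_{0}^{R}r^{\gamma}\,dr=R^{\gamma+1}/(\gamma+1)$ forces $\mathcal R_{0}(q_{1},R)\to 0$ as $R\to 0^{+}$.

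\textbf{Splitting according to $\beta_{0}$.} Since $V$ is not pointwise controlled, the weight $V^{\beta_{0}}$ has to be absorbed into $\int V|u|^{p}\le 1$ or $\int V|h|^{p}\le 1$, and the way it is distributed between $u$ and $h$ forces a case distinction that matches the piecewise definition of $\alpha^{*}(\beta_{0})$. For $\beta_{0}\in[0,1/p]$ I put all of $V$ on $h$ and write
\[
|x|^{\alpha_{0}}V^{\beta_{0}}|u|^{q_{1}-1}|h|\;=\;\bigl(V|h|^{p}\bigr)^{\beta_{0}}\cdot|h|^{1-p\beta_{0}}\cdot|x|^{\alpha_{0}}|u|^{q_{1}-1},
\]
then apply H\"older with exponents $\bigl(1/\beta_{0},\,p^{*}/(1-p\beta_{0}),\,p_{3}\bigr)$: the first factor is at most $\|h\|^{p\beta_{0}}\le1$, the second is bounded via Sobolev $D^{1,p}(\mathbb R^{N})\hookrightarrow L^{p^{*}}(\mathbb R^{N})$ applied to $h$, and the third is reduced to a purely radial integral by plugging $|u|\le C|x|^{-(N-p)/p}$. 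For $\beta_{0}\in[1/p,1]$ the weight $V$ must instead be shared, and I use
\[
|x|^{\alpha_{0}}V^{\beta_{0}}|u|^{q_{1}-1}|h|\;=\;\bigl(V|u|^{p}\bigr)^{(p\beta_{0}-1)/p}\cdot\bigl(V|h|^{p}\bigr)^{1/p}\cdot|x|^{\alpha_{0}}|u|^{q_{1}-p\beta_{0}},
\]
with H\"older exponents $\bigl(p/(p\beta_{0}-1),\,p,\,1/(1-\beta_{0})\bigr)$; here the assumption $q_{1}>p\beta_{0}$ is needed to keep $|u|^{q_{1}-p\beta_{0}}$ a nonnegative power of $u$, so that the radial decay estimate can again be inserted in the third factor. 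The degenerate values $\beta_{0}\in\{0,1/p,1\}$ send one H\"older exponent to $+\infty$ and collapse one factor to a constant, so in those cases a two-factor H\"older, or an $L^{\infty}$-bound on the third factor, is used.

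\textbf{Final estimate and main obstacle.} After inserting the radial decay bound, the third factor is a constant multiple of $\bigl(\int_{0}^{R}r^{\gamma}\,dr\bigr)^{1/p_{3}}$ for an explicit $\gamma=\gamma(q_{1},\alpha_{0},\beta_{0},N,p)$; using $p^{*}=pN/(N-p)$, a direct computation shows that in both cases $\gamma>-1$ is equivalent to $q_{1}<q^{*}(\alpha_{0},\beta_{0})$, so that the whole expression is dominated by a positive power of $R$ and the limit in the statement follows. The main technical point is the bookkeeping: verifying that each H\"older exponent lies in $[1,+\infty]$ throughout the admissible range of $(q_{1},\beta_{0})$, and checking the endpoint algebra. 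This is also where the two branches of $\alpha^{*}(\beta_{0})$ arise naturally: $q^{*}(\alpha_{0},\beta_{0})>1$ is the binding constraint in case $\beta_{0}\le 1/p$, giving $\alpha^{*}(\beta_{0})=p\beta_{0}-1-(p-1)N/p$, whereas $q^{*}(\alpha_{0},\beta_{0})>p\beta_{0}$ is binding in case $\beta_{0}\ge 1/p$, giving $\alpha^{*}(\beta_{0})=-(1-\beta_{0})N$.
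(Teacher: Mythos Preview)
Your proposal is correct and follows essentially the same approach as the paper: bound $K$ by $\Lambda\,|x|^{\alpha_0}V^{\beta_0}$, absorb $V^{\beta_0}$ into the weighted $L^p$ parts of $\|u\|$ and $\|h\|$ via H\"older (with the same split at $\beta_0=1/p$), insert the radial decay $|u(x)|\le C_{N,p}\|u\|\,|x|^{-(N-p)/p}$, and reduce to a radial power integral whose convergence condition is exactly $q_1<q^*(\alpha_0,\beta_0)$. The only cosmetic differences are that the paper writes the H\"older step as nested two-factor inequalities (packaged in a reusable lemma for the other theorems) and spells out the endpoint cases $\beta_0\in\{0,1/p,1\}$ separately, whereas you use an equivalent single three-factor H\"older and treat the endpoints as degenerations.
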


\begin{thm}
\label{THM1}
Let $1<p<N$ and let $V$, $K$ be as in $\left( \mathbf{V}\right) $, $\left( \mathbf{K}\right) $.
Assume that there exists $R_{2}>0$ such that $V\left( r\right) <+\infty $ for almost every $r>R_2$ and
\begin{equation}
\esssup_{r>R_{2}}\frac{K\left( r\right) }{r^{\alpha _{\infty
}}V\left( r\right) ^{\beta _{\infty }}}<+\infty \quad \text{for some }0\leq
\beta _{\infty }\leq 1\text{~and }\alpha _{\infty }\in \mathbb{R}.
\label{esssup all'inf}
\end{equation}
Then $\displaystyle \lim_{R\rightarrow +\infty }\mathcal{R}_{\infty }\left(
q_{2},R\right) =0$ for every $q_{2}\in \mathbb{R}$ such that 
\begin{equation}
q_{2}>\max \left\{ 1,p\beta _{\infty },q^{*}\left( \alpha _{\infty },\beta
_{\infty }\right) \right\} .  \label{th2}
\end{equation}
\end{thm}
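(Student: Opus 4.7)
The goal is to bound the tail integral $\int_{\mathbb{R}^N\setminus B_R}K(|x|)|u|^{q_2-1}|h|\,dx$ uniformly for $u \in W_r$, $h \in W$ with $\|u\|=\|h\|=1$, and show it tends to $0$ as $R \to +\infty$. The growth hypothesis yields a constant $C>0$ such that $K(r) \leq C\,r^{\alpha_\infty}V(r)^{\beta_\infty}$ a.e.\ for $r > R_2$, so for $R \geq R_2$ the task reduces to controlling $\int_{\mathbb{R}^N\setminus B_R}|x|^{\alpha_\infty}V(|x|)^{\beta_\infty}|u|^{q_2-1}|h|\,dx$. My plan is to combine three ingredients: the integral bounds $\int V|u|^p,\int V|h|^p \leq 1$ coming from $\|u\|,\|h\| \leq 1$, the Sobolev embedding $W \hookrightarrow L^{p^*}(\mathbb{R}^N)$, and the pointwise Strauss-type radial decay $|u(x)| \leq C_{N,p}|x|^{-(N-p)/p}$ available because $u$ is radial in $D^{1,p}(\mathbb{R}^N)$.

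The key manipulation is a three-factor Hölder decomposition that distributes the weight $V^{\beta_\infty}$ between $u$ and $h$:
$$V^{\beta_\infty}|u|^{q_2-1}|h| \;=\; (V|u|^p)^a\,(V|h|^p)^b\,|u|^c|h|^d$$
with $a+b=\beta_\infty$, $pa+c=q_2-1$, $pb+d=1$, and all exponents nonnegative. Choosing $b:=\min\{\beta_\infty,1/p\}$ and $a:=\beta_\infty-b$ gives $c=q_2-1-pa\geq 0$ (using $q_2>p\beta_\infty$) and $d=1-pb\geq 0$, so the splitting is admissible. Hölder with exponents $(1/a,1/b,1/(1-\beta_\infty))$ (with the obvious conventions when $a$, $b$, or $1-\beta_\infty$ vanishes), followed by the bounds $\int V|u|^p,\int V|h|^p\leq 1$, reduces the problem to showing
$$\int_{\mathbb{R}^N\setminus B_R}|x|^{\alpha_\infty/(1-\beta_\infty)}|u|^{c/(1-\beta_\infty)}|h|^{d/(1-\beta_\infty)}\,dx \;\longrightarrow\; 0 \quad\text{as }R\to +\infty.$$

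Applying the Strauss radial bound to $|u|^{c/(1-\beta_\infty)}$ and then Hölder with the Sobolev exponent $p^*$ to absorb $|h|^{d/(1-\beta_\infty)}$ (legitimate since a short check shows $d/(1-\beta_\infty) \leq 1 < p^*$ in all admissible ranges), the problem collapses to the finiteness and vanishing of a single purely radial integral $\int_R^{+\infty} r^\mu\,dr$ with an explicit exponent $\mu$. A direct computation using $c+d=q_2-p\beta_\infty$ and $N/p^*=(N-p)/p$ shows that $\mu<-1$ is equivalent to $q_2>q^*(\alpha_\infty,\beta_\infty)$, which is precisely our hypothesis; this makes the tail shrink to $0$, yielding the claim. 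The main technical obstacle is the degenerate case $\beta_\infty = 1$, where the exponent $1/(1-\beta_\infty)$ is infinite and the above scheme breaks down: there I would instead write $V|u|^{q_2-1}|h| = (V|u|^p)^{1/p'}(V|h|^p)^{1/p}|u|^{q_2-p}$ (legitimate because $q_2>p$), apply Hölder with exponents $(p',p)$ and the Strauss bound to $|u|^{q_2-p}$, and conclude via $\esssup_{|x|>R}|x|^{\alpha_\infty-(q_2-p)(N-p)/p}\to 0$, which is once more equivalent to $q_2>q^*(\alpha_\infty,1)$. The substantive content of the proof is therefore the bookkeeping that simultaneously reconciles Hölder admissibility ($q_2>p\beta_\infty$) with radial integrability ($q_2>q^*(\alpha_\infty,\beta_\infty)$), the two conditions appearing on equal footing in hypothesis~(\ref{th2}).
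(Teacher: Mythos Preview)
Your proposal is correct and follows essentially the same approach as the paper: the paper isolates the H\"older decomposition into a separate lemma (Lemma~\ref{Lem(Omega)}), splitting into the cases $\beta\le 1/p$, $1/p<\beta<1$, $\beta=1$, and then applies it with the Strauss exponent $\nu=(N-p)/p$, while you perform the same decomposition inline with the unified choice $b=\min\{\beta_\infty,1/p\}$ (which reproduces exactly the paper's case split) and treat $\beta_\infty=1$ separately just as the paper does. The exponent bookkeeping leading to $\mu<-1\iff q_2>q^*(\alpha_\infty,\beta_\infty)$ is identical to the paper's computation that the resulting power of $R$ is negative.
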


We observe explicitly that for every $\left( \alpha ,\beta \right) \in \mathbb{R%
}\times \left[ 0,1\right] $ one has 
\[
\max \left\{ 1,p\beta ,q^{*}\left( \alpha ,\beta \right) \right\} =\left\{ 
\begin{array}{ll}
q^{*}\left( \alpha ,\beta \right) \quad & \text{if }\alpha \geq \alpha
^{*}\left( \beta \right) \smallskip \\ 
\max \left\{ 1,p\beta \right\} & \text{if }\alpha \leq \alpha ^{*}\left(
\beta \right)
\end{array}
\right. . 
\]

\begin{rem}
\label{RMK: suff12}\quad 

\begin{enumerate}
\item  \label{RMK: suff12-V^0}We mean $V\left( r\right) ^{0}=1$ for every $r$
(even if $V\left( r\right) =0$). In particular, if $V\left( r\right) =0$ for
almost every $r>R_{2}$, then Theorem \ref{THM1} can be applied with $\beta
_{\infty }=0$ and assumption (\ref{esssup all'inf}) means 
\[
\esssup_{r>R_{2}}\frac{K\left( r\right) }{r^{\alpha _{\infty }}}%
<+\infty \quad \text{for some }\alpha _{\infty }\in \mathbb{R}.
\]
Similarly for Theorem \ref{THM0} and assumption (\ref{esssup in 0}), if $%
V\left( r\right) =0$ for almost every $r\in \left( 0,R_{1}\right) $.

\item  \label{RMK: suff12-no hp}The inequality $\max \left\{ 1,p\beta
_{0}\right\} <q^{*}\left( \alpha _{0},\beta _{0}\right) $ is equivalent to $%
\alpha _{0}>\alpha ^{*}\left( \beta _{0}\right) $. Then, in (\ref{th1}),
such inequality is automatically true and does not ask for further
conditions on $\alpha _{0}$ and $\beta _{0}$.

\item  \label{RMK: suff12-Vbdd}The assumptions of Theorems \ref{THM0} and 
\ref{THM1} may hold for different pairs $\left( \alpha _{0},\beta_{0}\right)$,
$\left( \alpha _{\infty },\beta _{\infty }\right) $. In this
case, of course, one chooses them in order to get the ranges for $q_{1},q_{2}
$ as large as possible. For instance, if $V$ is not singular at the origin, 
i.e., $V$ is essentially bounded in a neighbourhood of 0, 
and condition (\ref{esssup in 0}) holds true for a pair $\left( \alpha _{0},\beta _{0}\right) $%
, then (\ref{esssup in 0}) also holds for all pairs $\left( \alpha
_{0}^{\prime },\beta _{0}^{\prime }\right) $ such that $\alpha _{0}^{\prime
}<\alpha _{0}$ and $\beta _{0}^{\prime }<\beta _{0}$. Therefore, since $\max
\left\{ 1,p\beta \right\} $ is nondecreasing in $\beta $ and $q^{*}\left(
\alpha ,\beta \right) $ is increasing in $\alpha $ and decreasing in $\beta $%
, it is convenient to choose $\beta _{0}=0$ and the best interval where one
can take $q_{1}$ is $1<q_{1}<q^{*}\left( \overline{\alpha },0\right) $ with $%
\overline{\alpha }:=\sup \left\{ \alpha _{0}:\esssup_{r\in \left(
0,R_{1}\right) }\frac{K\left( r\right) }{r^{\alpha _{0}}}<+\infty \right\} $
(we mean $q^{*}\left( +\infty ,0\right) =+\infty $).
\end{enumerate}
\end{rem}

For any $\alpha \in \mathbb{R}$, $\beta \leq 1$ and $\gamma \in \mathbb{R}$,
define 
\begin{equation}
q_{*}\left( \alpha ,\beta ,\gamma \right) :=p\frac{\alpha -\gamma \beta +N}{%
N-\gamma }\quad \text{and}\quad q_{**}\left( \alpha ,\beta ,\gamma \right)
:=
p\frac{p\alpha +\left( 1-p\beta \right) \gamma +p\left( N-1\right) }{%
p\left( N-1\right) -\gamma (p-1)}.  \label{q** :=}
\end{equation}
Of course $q_{*}$ and $q_{**}$ are undefined if $\gamma =N$ and $\gamma
= \frac{p}{p-1}\left( N-1\right) $, respectively.

The next Theorems \ref{THM2} and \ref{THM3} improve the results of Theorems 
\ref{THM0} and \ref{THM1} by exploiting further informations on the growth
of $V$ (see Remarks \ref{RMK: Hardy 1}.\ref{RMK: Hardy 1-improve} and \ref
{RMK: Hardy 2}.\ref{RMK: Hardy 2-improve}).

\begin{thm}
\label{THM2}
Let $1<p<N$ and let $V$, $K$ be as in $\left( \mathbf{V}\right) $, $\left( \mathbf{K}\right) $.
Assume that there exists $R_{2}>0$ such that $V\left( r\right) <+\infty $ for almost every $r>R_2$ and
\begin{equation}
\esssup_{r>R_{2}}\frac{K\left( r\right) }{r^{\alpha _{\infty
}}V\left( r\right) ^{\beta _{\infty }}}<+\infty \quad \text{for some }0\leq
\beta _{\infty }\leq 1\text{~and }\alpha _{\infty }\in \mathbb{R}
\label{hp all'inf}
\end{equation}
and 
\begin{equation}
\essinf_{r>R_{2}}r^{\gamma _{\infty }}V\left( r\right) >0\quad 
\text{for some }\gamma _{\infty } \leq p.  \label{stima all'inf}
\end{equation}
Then $\displaystyle \lim_{R\rightarrow +\infty }\mathcal{R}_{\infty }\left(
q_{2},R\right) =0$ for every $q_{2}\in \mathbb{R}$ such that 
\begin{equation}
q_{2}>\max \left\{ 1,p\beta _{\infty },q_{*},q_{**}\right\} ,  \label{th3}
\end{equation}
where $q_{*}=q_{*}\left( \alpha _{\infty },\beta _{\infty },\gamma _{\infty
}\right) $ and $q_{**}=q_{**}\left( \alpha _{\infty },\beta _{\infty
},\gamma _{\infty }\right) .$
\end{thm}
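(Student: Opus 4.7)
The plan is to sharpen the radial pointwise decay used implicitly in Theorem \ref{THM1} by exploiting the additional lower bound (\ref{stima all'inf}) on $V$, and then to rerun the H\"older-splitting argument of Theorem \ref{THM1} with this sharper decay in place of the classical bound $|u(r)|\leq C r^{-(N-p)/p}\|u\|$. At $\gamma_{\infty}=p$ one checks $q_{*}=q_{**}=q^{\ast}(\alpha_{\infty},\beta_{\infty})$, so Theorem \ref{THM1} already covers that case and we may assume $\gamma_{\infty}<p$.

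\textbf{Step 1 (improved radial decay).} For $u\in W_{r}$ with $\|u\|=1$ and $r>R_{2}$, the well-known decay $u(s)\to 0$ as $s\to\infty$ yields $|u(r)|^{p}\leq p\int_{r}^{\infty}|u(s)|^{p-1}|u'(s)|\,ds$. For any $\sigma\geq 0$, the inequality $r^{\sigma}\leq s^{\sigma}$ on $[r,\infty)$ and H\"older with exponents $p/(p-1)$ and $p$ (decomposing the integrand as $(|u|s^{a_{1}})^{p-1}\cdot(|u'|s^{a_{2}})$ with $(p-1)a_{1}+a_{2}=\sigma$) produce, upon choosing $a_{1}p=N-1-\gamma_{\infty}$ and $a_{2}p=N-1$, the weighted integrals $\int_{R_{2}}^{\infty}|u|^{p}s^{N-1-\gamma_{\infty}}\,ds$ and $\int_{0}^{\infty}|u'|^{p}s^{N-1}\,ds$, both bounded by $C\|u\|^{p}$ via (\ref{stima all'inf}) and the definition of the norm. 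The constraint on $\sigma$ then forces
\[
\sigma=\frac{p(N-1)-\gamma_{\infty}(p-1)}{p}
\quad\text{and}\quad
|u(r)|\leq C r^{-\tau}\|u\|,\ \ \tau:=\sigma/p,\ r>R_{2},
\]
with $\tau>0$, strictly larger than the Strauss value $(N-p)/p$ whenever $\gamma_{\infty}<p$.

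\textbf{Step 2 (integral estimate).} Fix $u\in W_{r}$, $h\in W$ with $\|u\|=\|h\|=1$, $R>R_{2}$, and a real $s\in[\max\{0,p\beta_{\infty}-1\},\min\{q_{2}-1,p-1\})$ to be chosen. Splitting $|u|^{q_{2}-1}=|u|^{q_{2}-1-s}|u|^{s}$, using the pointwise bound of Step 1 on $|u|^{q_{2}-1-s}$ and invoking (\ref{hp all'inf}),
\[
\int_{|x|>R}K|u|^{q_{2}-1}|h|\,dx\leq C\int_{|x|>R}|x|^{\alpha_{\infty}-\tau(q_{2}-1-s)}V^{\beta_{\infty}}|u|^{s}|h|\,dx.
\]
Write $V^{\beta_{\infty}}|u|^{s}|h|=(V|u|^{p})^{s/p}(V|h|^{p})^{1/p}V^{\beta_{\infty}-(s+1)/p}$ and apply H\"older with exponents $p/s$, $p$, $p/(p-1-s)$. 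The first two factors produce $\|u\|^{s}\|h\|$; the choice $s\geq p\beta_{\infty}-1$ makes the $V$-exponent of the third factor $\leq 0$, so (\ref{stima all'inf}) replaces $V^{\beta_{\infty}-(s+1)/p}$ by a power of $|x|$. The tail then collapses to $\int_{R}^{\infty}r^{E}\,dr$ for a single exponent $E$, which vanishes as $R\to\infty$ iff
\[
s(p-\gamma_{\infty})>p\alpha_{\infty}-p\tau(q_{2}-1)-p\gamma_{\infty}\beta_{\infty}+\gamma_{\infty}+N(p-1);
\]
since $\gamma_{\infty}<p$, this is an explicit lower threshold $s>s_{*}(q_{2})$.

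\textbf{Step 3 (matching thresholds).} We need some admissible $s$ in $[\max\{0,p\beta_{\infty}-1\},\min\{q_{2}-1,p-1\})$ exceeding $s_{*}(q_{2})$. When $q_{2}\leq p$ the binding upper end is $q_{2}-1$ and the compatibility $s_{*}(q_{2})<q_{2}-1$ rearranges, using $p^{2}\tau=p(N-1)-\gamma_{\infty}(p-1)$, into $q_{2}>p(\alpha_{\infty}-\gamma_{\infty}\beta_{\infty}+N)/(N-\gamma_{\infty})=q_{*}$. When $q_{2}>p$ the binding end is $p-1$ and the analogous computation gives $q_{2}>q_{**}$. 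The remaining conditions $q_{2}>1$ and $q_{2}>p\beta_{\infty}$ ensure the admissible interval is non-degenerate. The edge case $\beta_{\infty}=1$ forces $s=p-1$ and is handled by a two-factor H\"older with an $L^{\infty}$ bound on the surviving $|x|^{A(s)}$ factor over $\{|x|>R\}$, producing the same condition $q_{2}>q_{**}$.

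\textbf{Main obstacle.} The delicate part is the bookkeeping of Step 3: one must verify simultaneously the sign condition $s\geq p\beta_{\infty}-1$, the H\"older bounds $s<p-1$ and $s\leq q_{2}-1$, the decay condition $s>s_{*}(q_{2})$, and the non-emptiness of their intersection under (\ref{th3}), across all boundary cases ($\beta_{\infty}\in\{0,1\}$ and $\gamma_{\infty}$ approaching $p$). The algebra is elementary but numerous.
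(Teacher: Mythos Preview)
Your proposal is correct and follows essentially the same strategy as the paper: the improved radial decay of Step~1 is precisely the pointwise estimate (\ref{PointwiseInfty}) (your $\tau$ equals the paper's $\nu$), and your H\"older splitting with the free parameter $s$ is the paper's combination of the shift $(\alpha_\infty,\beta_\infty)\mapsto(\alpha_\infty+\xi\gamma_\infty,\beta_\infty+\xi)$ with Lemma~\ref{Lem(Omega)}, under the change of variable $s=p(\beta_\infty+\xi)-1$. The only organizational difference is that the paper case-splits on the position of $\alpha_\infty$ relative to $\alpha_1,\alpha_2,\alpha_3$, whereas you case-split on $q_2$ versus $p$; the resulting thresholds $q_*$, $q_{**}$ coincide (indeed $s_*<q_2-1\Leftrightarrow q_2>q_*$ and $s_*<p-1\Leftrightarrow q_2>q_{**}$ hold simultaneously, not just in the two regimes you name, so both constraints are always active and together reproduce (\ref{th3})).
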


For future convenience, we define three functions $\alpha _{1}:=\alpha
_{1}\left( \beta ,\gamma \right) $, $\alpha _{2}:=\alpha _{2}\left( \beta
\right) $ and $\alpha _{3}:=\alpha _{3}\left( \beta ,\gamma \right) $ by
setting 
\begin{equation}
\alpha _{1}:=-\left( 1-\beta \right) \gamma ,\quad \alpha _{2}:=-\left(
1-\beta \right) N,\quad \alpha _{3}:=-\frac{(p-1) N+\left( 1-p\beta \right) \gamma 
}{p}.  \label{alpha_i :=}
\end{equation}
Then an explicit description of $\max \left\{ 1,p\beta ,q_{*},q_{**}\right\} 
$ is the following: for every $\left( \alpha ,\beta ,\gamma \right) \in \mathbb{%
R}\times \left( -\infty ,1\right] \times \left( -\infty ,N\right) $ we have 
\begin{equation}
\max \left\{ 1,p\beta ,q_{*},q_{**}\right\} =\left\{ 
\begin{array}{ll}
q_{**}\left( \alpha ,\beta ,\gamma \right) \quad & \text{if }\alpha \geq
\alpha _{1}\smallskip \\ 
q_{*}\left( \alpha ,\beta ,\gamma \right) & \text{if }\max \left\{ \alpha
_{2},\alpha _{3}\right\} \leq \alpha \leq \alpha _{1}\smallskip \\ 
\max \left\{ 1,p\beta \right\} & \text{if }\alpha \leq \max \left\{ \alpha
_{2},\alpha _{3}\right\}
\end{array}
\right. ,  \label{descrizioneThm2}
\end{equation}
where $\max \left\{ \alpha _{2},\alpha _{3}\right\} <\alpha _{1}$ for every $%
\beta <1$ and $\max \left\{ \alpha _{2},\alpha _{3}\right\} =\alpha _{1}=0$
if $\beta =1$.

\begin{rem}
\label{RMK: Hardy 1}\quad 

\begin{enumerate}
\item  \label{RMK: Hardy 1-B<0}The proof of Theorem \ref{THM2} does not
require $\beta _{\infty }\geq 0$, but this condition is not a restriction of
generality in stating the theorem. Indeed, under assumption (\ref{stima
all'inf}), if (\ref{hp all'inf}) holds with $\beta _{\infty }<0$, then it
also holds with $\alpha _{\infty }$ and $\beta _{\infty }$ replaced by $%
\alpha _{\infty }-\beta _{\infty }\gamma _{\infty }$ and $0$ respectively,
and this does not change the thesis (\ref{th3}), because $q_{*}\left( \alpha
_{\infty }-\beta _{\infty }\gamma _{\infty },0,\gamma _{\infty }\right)
=q_{*}\left( \alpha _{\infty },\beta _{\infty },\gamma _{\infty }\right) $
and $q_{**}\left( \alpha _{\infty }-\beta _{\infty }\gamma _{\infty
},0,\gamma _{\infty }\right) =q_{**}\left( \alpha _{\infty },\beta _{\infty
},\gamma _{\infty }\right) $.

\item  \label{RMK: Hardy 1-improve}

Denote $q^{*}=q^{*}\left( \alpha _{\infty
},\beta _{\infty }\right) $ for brevity. If $\gamma _{\infty }<p$, then one
has 
\[
\max \left\{ 1,p\beta _{\infty },q^{*}\right\} =\left\{ 
\begin{array}{ll}
\max \left\{ 1,p\beta _{\infty }\right\} =\max \left\{ 1,p\beta _{\infty
},q_{*},q_{**}\right\} \quad \smallskip  & \text{if }\alpha _{\infty }\leq
\alpha ^{*}\left( \beta _{\infty }\right)  \\ 
q^{*}>\max \left\{ 1,p\beta _{\infty },q_{*},q_{**}\right\}  & \text{if }%
\alpha _{\infty }>\alpha ^{*}\left( \beta _{\infty }\right) 
\end{array}
\right. ,
\]
so that, under assumption (\ref{stima all'inf}), Theorem \ref{THM2} improves
Theorem \ref{THM1}. Otherwise, if $\gamma _{\infty }=p$, we have $%
q_{*}=q_{**}=q^{*}$ and Theorems \ref{THM2} and \ref{THM1} give the same
result. This is not surprising, since, by Hardy inequality, the space $%
W$ coincides with $D^{1,p}(\mathbb{R}^{N})$ if $V\left( r\right) =r^{-p}
$ and thus, for $\gamma _{\infty }=p$, we cannot expect a better result than
the one of Theorem \ref{THM1}, which covers the case of $V\left( r\right)
\equiv 0$, i.e., of $D^{1,p}(\mathbb{R}^{N})$.

\item  \label{RMK: Hardy 1-best gamma}Description (\ref{descrizioneThm2})
shows that $q_{*}$ and $q_{**}$ are not relevant in inequality (\ref{th3}) if
$\alpha _{\infty }\leq \alpha _{2}\left( \beta _{\infty }\right) $. On the other hand, if 
$\alpha _{\infty }> \alpha _{2}\left( \beta _{\infty }\right) $, both $q_{*}$ and $q_{**}$ 
turn out to be increasing in $\gamma $ and
hence it is convenient to apply Theorem \ref{THM2} with the smallest $\gamma
_{\infty }$ for which (\ref{stima all'inf}) holds. This is consistent with
the fact that, if (\ref{stima all'inf}) holds with $\gamma _{\infty }$, then
it also holds with every $\gamma _{\infty }^{\prime }$ such that $\gamma
_{\infty }\leq \gamma _{\infty }^{\prime }\leq p$.
\end{enumerate}
\end{rem}

In order to state our last result, we introduce, by the following
definitions, an open region $\mathcal{A}_{\beta ,\gamma }$ of the $\alpha q$%
-plane, depending on $\beta\in[0,1]$ and $\gamma \geq p$. Recall the
definitions (\ref{q** :=}) of the functions $q_{*}=q_{*}\left( \alpha ,\beta
,\gamma \right) $ and $q_{**}=q_{**}\left( \alpha ,\beta ,\gamma \right) $.
We set 
\begin{equation}
\begin{array}{ll}
\mathcal{A}_{\beta ,\gamma }:=\left\{ \left( \alpha ,q\right) :\max \left\{
1,p\beta \right\} <q<\min \left\{ q_{*},q_{**}\right\} \right\} \quad
\smallskip & \text{if }p\leq\gamma <N, \\ 
\mathcal{A}_{\beta ,\gamma }:=\left\{ \left( \alpha ,q\right) :\max \left\{
1,p\beta \right\} <q<q_{**},\,\alpha >-\left( 1-\beta \right) N\right\}
\quad \smallskip & \text{if }\gamma =N, \\ 
\mathcal{A}_{\beta ,\gamma }:=\left\{ \left( \alpha ,q\right) :\max \left\{
1,p\beta ,q_{*}\right\} <q<q_{**}\right\} \smallskip & \text{if }N<\gamma
<\frac{p}{p-1}(N-1), \\ 
\mathcal{A}_{\beta ,\gamma }:=\left\{ \left( \alpha ,q\right) :\max \left\{
1,p\beta ,q_{*}\right\} <q,\,\alpha >-\left( 1-\beta \right) \gamma \right\}
\smallskip & \text{if }\gamma =\frac{p}{p-1}(N-1), \\ 
\mathcal{A}_{\beta ,\gamma }:=\left\{ \left( \alpha ,q\right) :\max \left\{
1,p\beta ,q_{*},q_{**}\right\} <q\right\} & \text{if }\gamma >\frac{p}{p-1}(N-1).
\end{array}
\label{A:=}
\end{equation}
Notice that $\frac{p}{p-1}(N-1)>N$ because $p<N$. For more clarity, $\mathcal{A}_{\beta ,\gamma }$ is sketched in the
following five pictures, according to the five cases above. Recall the
definitions (\ref{alpha_i :=}) of the functions $\alpha _{1}=\alpha
_{1}\left( \beta ,\gamma \right) $, $\alpha _{2}=\alpha _{2}\left( \beta
\right) $ and $\alpha _{3}=\alpha _{3}\left( \beta ,\gamma \right) $.\bigskip

\noindent
\begin{tabular}[t]{l}
\begin{tabular}{l}
\textbf{Fig.1}:\ \ $\mathcal{A}_{\beta ,\gamma }$ for 
$p\leq \gamma <N$.\smallskip\\
$\bullet $ If $\gamma =p$, the two straight \\
lines above are the same.\smallskip\\
$\bullet $ If $\beta <1$ we have\\
$\max \left\{ \alpha _{2},\alpha _{3}\right\} <\alpha _{1}<0.$\\
If $\beta =1$ we have\\
$\alpha _{3}<\alpha _{2}=\alpha _{1}=0$\\
and $\mathcal{A}_{1,\gamma }$ reduces to the angle\\
$p<q<q_{**}$.
\end{tabular}
\begin{tabular}{l}
\includegraphics[width=3.7in]{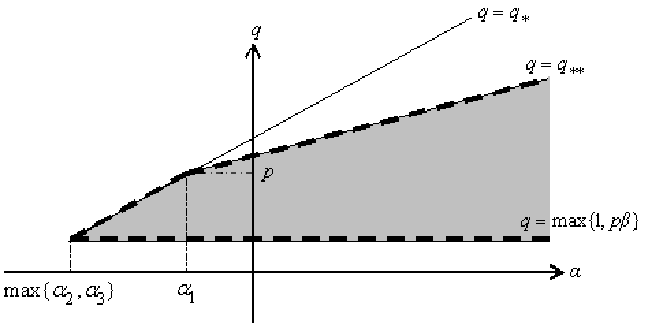}
\end{tabular}
\end{tabular}
\bigskip

\noindent
\begin{tabular}[t]{l}
\begin{tabular}{l}
\textbf{Fig.2}:\ \ $\mathcal{A}_{\beta ,\gamma }$ 
$\gamma =N$.\smallskip\\
$\bullet $ If $\beta <1$ we have\\
$\alpha _{1}=\alpha _{2}=\alpha _{3}<0.$\\
If $\beta =1$ we have\\
$\alpha _{1}=\alpha _{2}=\alpha _{3}=0$\\
and $\mathcal{A}_{1,\gamma }$ reduces to the angle\\
$p<q<q_{**}$.
\end{tabular}
\begin{tabular}{l}
\includegraphics[width=3.7in]{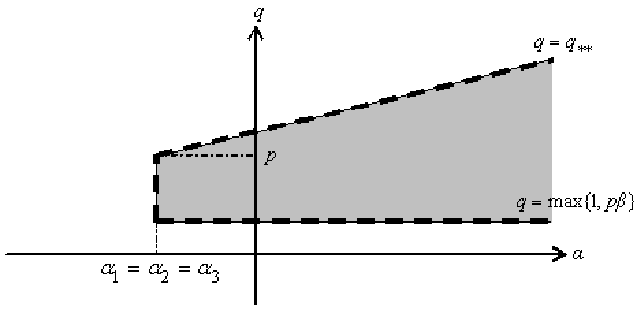}
\end{tabular}
\end{tabular}
\bigskip

\noindent
\begin{tabular}[t]{l}
\begin{tabular}{l}
\textbf{Fig.3}:\ \ $\mathcal{A}_{\beta ,\gamma }$ for \smallskip\\
$N<\gamma<\frac{p}{p-1}(N-1)$.\smallskip\\
$\bullet $ If $\beta <1$ we have\\
$\alpha _{1}<\min \left\{ \alpha _{2},\alpha _{3}\right\} <0.$\\
If $\beta =1$ we have\\
$0=\alpha _{1}=\alpha_{2}<\alpha_{3}$\\
and $\mathcal{A}_{1,\gamma }$ reduces to the angle\\
$p<q<q_{**}$.
\end{tabular}
\begin{tabular}{l}
\includegraphics[width=3.7in]{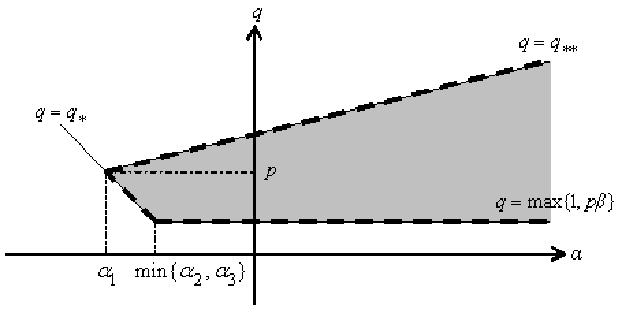}
\end{tabular}
\end{tabular}
\bigskip

\noindent
\begin{tabular}[t]{l}
\begin{tabular}{l}
\textbf{Fig.4}:\ \ $\mathcal{A}_{\beta ,\gamma }$ for 
$\gamma=\frac{p}{p-1}(N-1)$.\smallskip\\ 
$\bullet $ If $\beta <1$ we have \\ 
$\alpha _{1}<\min \left\{ \alpha _{2},\alpha _{3}\right\} <0.$ \\ 
If $\beta =1$ we have \\ 
$0=\alpha _{1}=\alpha_{2}<\alpha_{3}$ \\ 
and $\mathcal{A}_{1,\gamma }$ reduces to the angle\\ 
$\alpha >0,\,q>p$.
\end{tabular}
\begin{tabular}{l}
\includegraphics[width=3.4in]{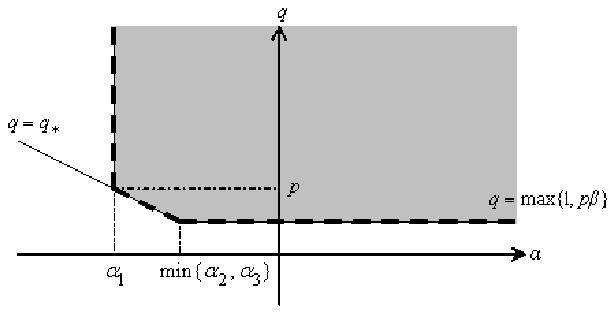}
\end{tabular}
\end{tabular}
\bigskip

\noindent
\begin{tabular}[t]{l}
\begin{tabular}{l}
\textbf{Fig.5}:\ \ $\mathcal{A}_{\beta ,\gamma }$ for 
$\gamma>\frac{p}{p-1}(N-1)$.\smallskip\\
$\bullet $ If $\beta <1$ we have\\
$\alpha _{1}<\min \left\{ \alpha _{2},\alpha _{3}\right\} <0.$\\
If $\beta =1$ we have\\
$0=\alpha _{1}=\alpha_{2}<\alpha_{3}$\\
and $\mathcal{A}_{1,\gamma }$ reduces to the angle\\
$q>\max \left\{p,q_{**}\right\} $.
\end{tabular}
\begin{tabular}{l}
\includegraphics[width=3.4in]{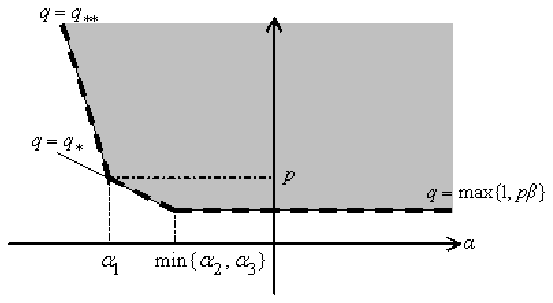}
\end{tabular}
\end{tabular}
\bigskip

\begin{thm}
\label{THM3}Let $N\geq 3$ and let $V$, $K$ be as in $\left( \mathbf{V}\right) $, $\left( \mathbf{K}\right)$.
Assume that there exists $R_{1}>0$ such that 
$V\left( r\right) <+\infty $ almost everywhere in $(0,R_1)$ and
\begin{equation}
\esssup_{r\in \left( 0,R_{1}\right) }\frac{K\left( r\right) }{%
r^{\alpha _{0}}V\left( r\right) ^{\beta _{0}}}<+\infty \quad \text{for some }%
0\leq \beta _{0}\leq 1\text{~and }\alpha _{0}\in \mathbb{R}  \label{hp in 0}
\end{equation}
and 
\begin{equation}
\essinf_{r\in \left( 0,R_{1}\right) }r^{\gamma _{0}}V\left(
r\right) >0\quad \text{for some }\gamma _{0}\geq p.
  \label{stima in 0}
\end{equation}
Then $\displaystyle \lim_{R\rightarrow 0^{+}}\mathcal{R}_{0}\left(
q_{1},R\right) =0$ for every $q_{1}\in \mathbb{R}$ such that 
\begin{equation}
\left( \alpha _{0},q_{1}\right) \in \mathcal{A}_{\beta _{0},\gamma _{0}}.
\label{th4}
\end{equation}
\end{thm}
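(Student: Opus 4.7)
\textbf{Proof plan for Theorem \ref{THM3}.}
The plan is to bound the integral defining $\mathcal{R}_{0}(q_{1},R)$ by combining pointwise radial estimates on $u$ with H\"older's inequality, exploiting (\ref{hp in 0}) to trade factors of $K$ for factors of $r^{\alpha_{0}}V^{\beta_{0}}$, and then showing that the resulting bound is a strictly positive power of $R$ precisely when $(\alpha_{0},q_{1})\in\mathcal{A}_{\beta_{0},\gamma_{0}}$.

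\textbf{Step 1 (two pointwise bounds for radial $u$).} For $u\in W_{r}$ with $\|u\|\le1$ I have at my disposal \emph{two} radial pointwise estimates on $(0,R_{1})$. The first is the Strauss--Ni inequality $|u(r)|\le C\,r^{-(N-p)/p}$, coming purely from $\|\nabla u\|_{L^{p}}\le1$. The second comes from (\ref{stima in 0}): since $V(r)\ge c\,r^{-\gamma_{0}}$ with $\gamma_{0}\ge p$, the bound $\int V|u|^{p}\,dx\le1$ forces $\int_{0}^{R_{1}}r^{N-1-\gamma_{0}}|u|^{p}\,dr<\infty$, and inserting this into an integration by parts $|u(r)|^{s}=-s\int_{r}^{\infty}|u|^{s-2}u\,u'\,ds'$ followed by H\"older with the weight $s'^{(N-1)/p}$ yields an improved bound of the form $|u(r)|\le C\,r^{-\nu(\gamma_{0})}$ in which the exponent $\nu(\gamma_{0})$ is controlled by $\gamma_{0}$ (approximation by $C_{\mathrm{c}}^{\infty}$ radial functions makes the boundary terms vanish). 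The two exponents $q_{*}$ and $q_{**}$ in (\ref{q** :=}) are exactly the critical Lebesgue exponents associated with these two radial estimates (the denominator $N-\gamma$ in $q_{*}$ reflects the weighted Sobolev character, and $p(N-1)-\gamma(p-1)$ in $q_{**}$ reflects the Strauss-type estimate).

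\textbf{Step 2 (H\"older splitting).} I write
\[
K(r)|u|^{q_{1}-1}|h|=\bigl[K(r)\,r^{-\alpha_{0}}V(r)^{-\beta_{0}}\bigr]\cdot r^{\alpha_{0}}V(r)^{\beta_{0}}|u|^{q_{1}-1}|h|,
\]
use (\ref{hp in 0}) to control the bracketed factor by a constant, and then split the remaining factor as $|u|^{q_{1}-1-\sigma}\cdot|u|^{\sigma}\cdot|h|$ for some $\sigma\in[0,q_{1}-1]$ to be chosen. Bounding $|u|^{q_{1}-1-\sigma}$ pointwise by a convex combination of the two Strauss-type estimates of Step~1 (i.e.\ $|u|^{q_{1}-1-\sigma}\le C\,r^{-a(\sigma,\theta)}$ for an appropriate interpolation parameter $\theta\in[0,1]$ depending on the case), and then applying H\"older's inequality with some pair of dual exponents $(p_{1},p_{1}')$ to bound
\[
\int_{B_{R}}r^{\alpha_{0}-a(\sigma,\theta)}V(r)^{\beta_{0}}|u|^{\sigma}|h|\,dx
\]
by a product of a purely radial integral of the weight (which produces the power of $R$) and integrals of $V^{c}|u|^{p}$, $V^{c}|h|^{p}$, $|h|^{p^{*}}$, which are all controlled by $\|u\|=\|h\|=1$ together with the Sobolev embedding $D^{1,p}\hookrightarrow L^{p^{*}}$.

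\textbf{Step 3 (verifying the region $\mathcal{A}_{\beta_{0},\gamma_{0}}$).} The parameters $\sigma,\theta,p_{1}$ must be chosen so that the various exponents are admissible (non-negative integrands, H\"older exponents in $(1,\infty)$) and so that the radial integral produces a strictly positive power of $R$; a short computation then identifies the constraints on $(\alpha_{0},q_{1})$ with exactly the inequalities $q_{1}>\max\{1,p\beta_{0}\}$, $q_{1}<q_{**}$ (resp.\ $q_{1}>q_{*}$) and the lower bound on $\alpha_{0}$ that define $\mathcal{A}_{\beta_{0},\gamma_{0}}$. The five sub-cases in (\ref{A:=}) arise because the relevant binding inequality switches depending on the sign of $N-\gamma_{0}$ and of $\tfrac{p}{p-1}(N-1)-\gamma_{0}$: when both denominators are positive the estimate from Step~1(b) dominates, and one recovers the two-sided strip bounded by $q_{*}$ and $q_{**}$; when one denominator changes sign the corresponding exponent disappears from the bound and is replaced by an open half-plane condition on $\alpha_{0}$ (matching the pictures in Figs.\ 2--5). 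The conclusion $\mathcal{R}_{0}(q_{1},R)\le C R^{\delta}\to 0$ then follows.

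\textbf{Main obstacle.} I expect the bookkeeping of Step~3 to be the main difficulty: namely choosing the free parameters $\sigma,\theta,p_{1}$ so that all H\"older constraints are feasible \emph{and} so that the condition ``exponent of $R$ is positive'' reduces exactly to the inequalities defining $\mathcal{A}_{\beta_{0},\gamma_{0}}$ in each of the five regimes. This is where the two critical denominators $N-\gamma_{0}$ and $\tfrac{p}{p-1}(N-1)-\gamma_{0}$ enter as obstructions, and the computation is delicate enough that the authors have announced it will be deferred to an appendix.
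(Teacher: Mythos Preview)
Your general scheme (pointwise radial decay + H\"older + optimization of free parameters) is exactly the one the paper uses, and your reading of the five sub-cases as coming from the sign changes of $N-\gamma_{0}$ and $\tfrac{p}{p-1}(N-1)-\gamma_{0}$ is correct. But the specific mechanism you propose in Steps~1--2 is not the one that works, and as stated it would not recover the full region $\mathcal{A}_{\beta_{0},\gamma_{0}}$.

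The first issue is that interpolating (via your $\theta$) between the Strauss decay $r^{-(N-p)/p}$ and the improved decay $r^{-\nu}$, $\nu=\tfrac{p(N-1)-(p-1)\gamma_{0}}{p^{2}}$, buys you nothing: since $\gamma_{0}\geq p$ one has $\nu\leq (N-p)/p$, so near $0$ the improved bound is always at least as good, and you should simply use it throughout. In particular your attribution of $q_{*}$ and $q_{**}$ to ``the two radial estimates'' is not how these exponents arise. In the paper both $q_{*}$ and $q_{**}$ come from the \emph{single} improved pointwise bound (Lemma~\ref{LEM(pointwise0)}) combined with two different H\"older splittings of $V^{\beta}|u|^{q-1}|h|$ (the cases $\tfrac{1}{p}<\beta<1$ and $\beta=1$ of Lemma~\ref{Lem(Omega)}, packaged as Lemma~\ref{Lem(Omega0)}).

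The second and more serious gap is that your scheme keeps $\beta_{0}$ fixed: you write $K\leq\Lambda\,r^{\alpha_{0}}V^{\beta_{0}}$ once and then try to H\"older the remaining $V^{\beta_{0}}|u|^{\sigma}|h|$. If $\beta_{0}<\tfrac{1}{p}$ there is simply not enough $V$ available to feed into the norms $\int V|u|^{p}$ or $\int V|h|^{p}$, and no choice of $\sigma,p_{1}$ will produce the sharp region. The device that the paper uses (and that you are missing) is a parameter shift: hypothesis (\ref{stima in 0}) gives $r^{\gamma_{0}}V\geq\lambda_{0}$, hence for every $\xi\geq 0$
\[
K(r)\leq \Lambda_{0}\,r^{\alpha_{0}}V^{\beta_{0}}\leq \frac{\Lambda_{0}}{\lambda_{0}^{\xi}}\,r^{\alpha_{0}+\xi\gamma_{0}}V^{\beta_{0}+\xi},
\]
so one may replace $(\alpha_{0},\beta_{0})$ by $(\alpha_{0}+\xi\gamma_{0},\beta_{0}+\xi)$ and apply Lemma~\ref{Lem(Omega)} with $\beta=\beta_{0}+\xi\in[\tfrac{1}{p},1]$. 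The free parameter $\xi$ (not your $\theta$) is what is then optimized; the five cases of (\ref{A:=}) are precisely the conditions under which an admissible $\xi$ exists making the resulting exponent of $R$ positive, and that is the computation relegated to the Appendix.
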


\begin{rem}
\label{RMK: Hardy 2}\quad 

\begin{enumerate}
\item  Condition (\ref{th4}) also asks for a lower bound on $\alpha _{0}$,
except for the case $\gamma _{0}>\frac{p}{p-1}(N-1)$, as it is clear from Figures 1-5.

\item  The proof of Theorem \ref{THM3} does not require $\beta _{0}\geq 0$,
but this is not a restriction of generality in stating the theorem (cf.
Remark \ref{RMK: Hardy 1}.\ref{RMK: Hardy 1-B<0}). Indeed, under assumption (%
\ref{stima in 0}), if (\ref{hp in 0}) holds with $\beta _{0}<0$, then it
also holds with $\alpha _{0}$ and $\beta _{0}$ replaced by $\alpha
_{0}-\beta _{0}\gamma _{0}$ and $0$ respectively, and one has that $\left(
\alpha _{0},q_{1}\right) \in \mathcal{A}_{\beta _{0},\gamma _{0}}$ if and
only if $\left( \alpha _{0}-\beta _{0}\gamma _{0},q_{1}\right) \in \mathcal{A%
}_{0,\gamma _{0}}$.

\item  \label{RMK: Hardy 2-improve}If (\ref{stima in 0}) holds with $\gamma
_{0}>p$, then Theorem \ref{THM3} improves Theorem \ref{THM0}. Otherwise, if $%
\gamma _{0}=p$, then one has $\max \left\{ \alpha _{2},\alpha _{3}\right\}
=\alpha ^{*}\left( \beta _{0}\right) $ and $\left( \alpha _{0},q_{1}\right)
\in \mathcal{A}_{\beta _{0},\gamma _{0}}$ is equivalent to $\max \left\{
1,p\beta _{0}\right\} <q_{1}<q^{*}\left( \alpha _{0},\beta _{0}\right) $,
i.e., Theorems \ref{THM3} and \ref{THM0} give the same result, which is
consistent with Hardy inequality (cf. Remark \ref{RMK: Hardy 1}.\ref{RMK:
Hardy 1-improve}).

\item  \label{RMK: Hardy 2-best gamma}Given $\beta \leq 1$, one can check
that $\mathcal{A}_{\beta ,\gamma _{1}}\subseteq \mathcal{A}_{\beta ,\gamma
_{2}}$ for every $p\leq \gamma _{1}<\gamma _{2}$, so that, in applying
Theorem \ref{THM3}, it is convenient to choose the largest $\gamma _{0}$ for
which (\ref{stima in 0}) holds. This is consistent with the fact that, if (%
\ref{stima in 0}) holds with $\gamma _{0}$, then it also holds with every $%
\gamma _{0}^{\prime }$ such that $p\leq \gamma _{0}^{\prime }\leq \gamma _{0}
$.
\end{enumerate}
\end{rem}

\section{Proof of Theorem \ref{THM(cpt)} \label{SEC:1}}

Assume $1<p<N$ and let $V$ and $K$ be as in $\left( \mathbf{V}\right) $
and $\left( \mathbf{K}\right) $.

Recall the definitions (\ref{spaces}) of the Banach spaces $W$ and $W_r$. 
Using the results of \cite[Lemma 1]{Su-Wang-Will-p}, fix two constants $S_{N,p}>0$ and 
$C_{N,p}>0$, only dependent on $N$ and $p$, such that 
\begin{equation}
\forall u\in W ,\quad 
\left\| u\right\|_{L^{p^{*}}(\mathbb{R}^{N})}\leq S_{N,p}\left\| u\right\|
  \label{Sobolev}
\end{equation}
and 
\begin{equation}
\forall u\in W_r  ,\quad 
\left|u\left( x\right) \right| \leq C_{N,p}\left\| u\right\| \frac{1}{\left|
x\right| ^{\frac{N-p}{p}}}\quad \text{almost everywhere on }\mathbb{R}^{N}.
\label{PointwiseEstimate}
\end{equation}

Recall from assumption $\left( \mathbf{K}\right) $ that $K\in L_{\mathrm{loc}%
}^{s}\left( \left( 0,+\infty \right) \right) $ for some $s>1$.

\begin{lem}
\label{Lem(corone)}Let $R>r>0$ and $1<q<\infty $. 
Then there exist $\tilde{C}=\tilde{C}\left(N,p,r,R,q,s\right) >0$ and $l=l\left(p,q,s\right) >0$
such that $\forall u\in W_r$ one has 
\begin{equation}
\int_{B_{R}\setminus B_{r}}K\left( \left| x\right| \right) \left|u\right| ^q dx
\leq
\tilde{C}\left\| K\left( \left| \cdot\right| \right) \right\| _{L^{s}(B_{R}\setminus B_{r})}
\left\|u\right\|^{q-lp} \left(\int_{B_R\setminus B_r}\left|u\right|^{p}dx\right)^l.
\label{LEM:corone}
\end{equation}
Moreover, if 
$$s> \frac{p^*}{p^* -1} =  \frac{Np}{N(p-1)+p} \quad \left( p^* = \frac{Np}{N-p}\right)$$
in assumption $(\mathbf{K})$, then there exists $\tilde{C}_1=\tilde{C}_1\left(N,p,r,R,q,s\right)>0$ 
such that $\forall u\in W_r$ and $\forall h\in W$ one has 
\[
\frac{\int_{B_{R}\setminus B_{r}}K\left( \left| x\right| \right) \left|
u\right| ^{q-1}\left| h\right| dx}{\tilde{C}_1\left\| K\left( \left| \cdot
\right| \right) \right\| _{L^{s}(B_{R}\setminus B_{r})}}\leq \left\{ 
\begin{array}{ll}
\left( \int_{B_{R}\setminus B_{r}}\left| u\right| ^{p}dx\right) ^{\frac{q-1}{%
p}}\left\| h\right\| \medskip  & \text{if }q\leq \tilde{q} \\ 
\left( \int_{B_{R}\setminus B_{r}}\left| u\right| ^{p}dx\right) ^{\frac{%
\tilde{q}-1}{p}}\left\| u\right\| ^{q-\tilde{q}}\left\| h\right\| \quad
\medskip  & \text{if }q>\tilde{q}
\end{array}
\right. 
\]
where $\tilde{q}:=p\left( 1+\frac{1}{N}-\frac{1}{s}\right) $ (note that $s>%
\frac{Np}{N(p-1)+p}$ implies $\tilde{q}>1$).
\end{lem}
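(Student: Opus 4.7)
Both inequalities have the same structure: peel off $K$ via Hölder with exponents $s,s'$ (so the first factor becomes $\|K(|\cdot|)\|_{L^s(B_R\setminus B_r)}$), then control the remaining pure-$u$ (or $u,h$) integral using two tools — Hölder against the finite Lebesgue measure of the annulus $B_R\setminus B_r$, and the radial pointwise bound \eqref{PointwiseEstimate}, which on the annulus reads $|u(x)|\leq C_{N,p}\|u\|\,r^{-(N-p)/p}$ pointwise for $u\in W_r$. The two regimes in the statement (the appearance of $l$ in part~1, and the dichotomy $q\lessgtr\tilde q$ in part~2) correspond exactly to whether, after all Hölder steps, the exponent on $|u|$ that remains is $\leq p$ or $>p$.

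\textbf{Part (1).} After Hölder I am left with $\left(\int_{B_R\setminus B_r}|u|^{qs'}\,dx\right)^{1/s'}$. If $qs'\geq p$, I write $|u|^{qs'}=|u|^p\cdot|u|^{qs'-p}$ and use \eqref{PointwiseEstimate} on the second factor, which lives on the annulus, to pull out $\|u\|^{qs'-p}$ times a constant depending on $N,p,r,R$; taking the $1/s'$-power produces \eqref{LEM:corone} with $l=1/s'$ and therefore $q-lp=q-p/s'$. If instead $qs'<p$, I apply Hölder with exponent $p/(qs')$ against the annulus to get $\int|u|^{qs'}\leq|B_R\setminus B_r|^{1-qs'/p}(\int|u|^p)^{qs'/p}$, yielding \eqref{LEM:corone} with $l=q/p$ (and trivial $\|u\|$-factor, $q-lp=0$). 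In both subcases $l=l(p,q,s)>0$, as required.

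\textbf{Part (2).} A first Hölder in $s,s'$ reduces me to $\left(\int |u|^{(q-1)s'}|h|^{s'}\,dx\right)^{1/s'}$. The hypothesis $s>\tfrac{Np}{N(p-1)+p}=\tfrac{p^*}{p^*-1}$ is equivalent to $s'<p^*$, so I can apply a second Hölder with exponents $p^*/s'$ and its conjugate to separate $h$, arriving at $\left(\int|u|^\alpha\right)^{\eta}\|h\|_{L^{p^*}}$, where $\alpha:=(q-1)s'p^*/(p^*-s')$ and $\eta:=(p^*-s')/(p^*s')$. The Sobolev inequality \eqref{Sobolev} replaces $\|h\|_{L^{p^*}}$ by $S_{N,p}\|h\|$. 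A short bookkeeping computation — based only on $1/p^*=1/p-1/N$ and $1/s'=1-1/s$ — gives the key identities $\alpha\eta=q-1$, $\,p\eta=\tilde q-1$, and $\alpha=p(q-1)/(\tilde q-1)$; hence $\alpha\lessgtr p\iff q\lessgtr\tilde q$. Now I close as in part~(1): if $q\leq\tilde q$ (so $\alpha\leq p$), Hölder of $|u|^\alpha$ against the annulus gives $(\int|u|^\alpha)^\eta\leq C(\int|u|^p)^{\alpha\eta/p}=C(\int|u|^p)^{(q-1)/p}$; if $q>\tilde q$ (so $\alpha>p$), writing $|u|^\alpha=|u|^p|u|^{\alpha-p}$ and using \eqref{PointwiseEstimate} on the annulus yields $(\int|u|^\alpha)^\eta\leq C\|u\|^{(\alpha-p)\eta}(\int|u|^p)^{p\eta/p}=C\|u\|^{q-\tilde q}(\int|u|^p)^{(\tilde q-1)/p}$, where $(\alpha-p)\eta=q-\tilde q$ by the identities above.

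\textbf{Main obstacle.} There is no conceptual difficulty: both halves are chained Hölder inequalities plus the radial pointwise bound. The only point that needs care is verifying the numerology — i.e., that $\tilde q=p(1+1/N-1/s)$ is exactly the threshold at which the auxiliary exponent $\alpha$ crosses $p$, and that the $(q-1)/p$ and $(\tilde q-1)/p$ powers claimed in the statement emerge from $\alpha\eta$ and $p\eta$ respectively. This is why the authors defer the detailed arithmetic, but it presents no genuine obstruction.
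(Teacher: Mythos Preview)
Your argument is correct and follows the same essential strategy as the paper: iterated H\"older to separate $K$, $|u|^{q-1}$, and $|h|$, followed by the radial pointwise bound \eqref{PointwiseEstimate} on the annulus to absorb any excess power of $|u|$ beyond $p$. The only tactical differences are: in Part~(1) the paper avoids your case split by first choosing an auxiliary exponent $t\in(1,s)$ with $t'q>p$ (so the ``$\alpha>p$'' branch always applies, giving $l=1/t'$), and in Part~(2) the paper applies H\"older in the opposite order---first peeling off $|h|$ with exponents $(p^*)',p^*$, then separating $K$ from $|u|^{q-1}$---which leads to the same final exponent $p(q-1)/(\tilde q-1)$ on $|u|$ that you compute.
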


\proof%
Let $u\in W_r$ and fix $t\in(1,s)$ such that $t'q>p$ (where $t'=t/(t-1)$). 
Then, by H\"{o}lder inequality and (\ref{PointwiseEstimate}), we have 
\begin{eqnarray*}
&& \int_{B_{R}\setminus B_{r}}K\left( \left| x\right| \right) \left| u\right|^{q} dx \\
&\leq & \left( \int_{B_{R}\setminus B_{r}}K\left(\left| x\right| \right) ^{t}dx\right) ^{\frac{1}{t}}
\left( \int_{B_{R}\setminus B_{r}}\left|u\right| ^{t^{\prime }q}dx\right) ^{\frac{1}{t^{\prime }}} \\
&\leq & 
\left| B_{R}\setminus B_{r}\right| ^{\frac{1}{t}-\frac{1}{s}}
\left\| K\left( \left| \cdot \right| \right) \right\|_{L^{s}(B_{R}\setminus B_{r})}
\left( \int_{B_{R}\setminus B_{r}}\left|u\right| ^{t^{\prime }q-p}\left|u\right|^{p} dx\right) ^{\frac{1}{t^{\prime }}}
\\
&\leq &
\left| B_{R}\setminus B_{r}\right| ^{\frac{1}{t}-\frac{1}{s}}
\left\| K\left( \left| \cdot \right| \right) \right\|_{L^{s}(B_{R}\setminus B_{r})}
\left( \frac{C_{N,p}\left\|u\right\| }{r^{\frac{N-p}{p}}}\right) ^{q-p/t^{\prime}}
\left( \int_{B_{R}\setminus B_{r}}\left|u\right|^{p} dx\right) ^{\frac{1}{t^{\prime }}}.
\end{eqnarray*}
This proves (\ref{LEM:corone}). As to the second part of the lemma, let $u\in W_r$ and $h\in W$. 
For simplicity, we denote by $\sigma $ the H\"{o}lder-conjugate exponent of $%
p^{*}$, i.e., $\sigma := \frac{Np}{N(p-1)+p} 
$. By H\"{o}lder inequality
(note that $\frac{s}{\sigma }>1$), we have 
\begin{eqnarray*}
&&\int_{B_{R}\setminus B_{r}}K\left( \left| x\right| \right) \left| u\right|
^{q-1}\left| h\right| dx \\
&\leq & \left( \int_{B_{R}\setminus B_{r}}K\left(
\left| x\right| \right) ^{\sigma }\left| u\right| ^{\left( q-1\right) \sigma
}dx\right) ^{\frac{1}{\sigma }}\left( \int_{B_{R}\setminus B_{r}}\left|
h\right| ^{p^{*}}dx\right) ^{\frac{1}{p^{*}}} \\
&\leq & \left( \left( \int_{B_{R}\setminus B_{r}}K\left( \left| x\right|
\right) ^{s}dx\right) ^{\frac{\sigma }{s}}\left( \int_{B_{R}\setminus
B_{r}}\left| u\right| ^{\left( q-1\right) \sigma \left( \frac{s}{\sigma }%
\right) ^{\prime }}dx\right) ^{\frac{1}{\left( \frac{s}{\sigma }\right)
^{\prime }}}\right) ^{\frac{1}{\sigma }}S_{N,p}\left\| h\right\| \\
&\leq & S_{N,p}\left\| K\left( \left| \cdot \right| \right) \right\|
_{L^{s}(B_{R}\setminus B_{r})}\left\| h\right\| \left( \int_{B_{R}\setminus
B_{r}}\left| u\right| ^{p\frac{q-1}{\tilde{q}-1}}dx\right) ^{\frac{\tilde{q}%
-1}{p}},
\end{eqnarray*}
where we computed $\sigma \left( \frac{s}{\sigma }\right) ^{\prime }=\frac{%
pNs}{s N(p-1)+ps-pN}=\frac{p}{\tilde{q}-1}$. If $q < \tilde{q}$,
then we get 
\begin{eqnarray*}
&&\int_{B_{R}\setminus B_{r}}K\left( \left| x\right| \right) \left| u\right|
^{q-1}\left| h\right| dx \\
&\leq &S_{N,p}\left\| K\left( \left| \cdot \right|
\right) \right\| _{L^{s}(B_{R}\setminus B_{r})}\left\| h\right\| \left(
\left| B_{R}\setminus B_{r}\right| ^{1-\frac{q-1}{\tilde{q}-1}}\left(
\int_{B_{R}\setminus B_{r}}\left| u\right| ^{p}dx\right) ^{\frac{q-1}{\tilde{%
q}-1}}\right) ^{\frac{\tilde{q}-1}{p}} \\
&=&S_{N,p}\left\| K\left( \left| \cdot \right| \right) \right\|
_{L^{s}(B_{R}\setminus B_{r})}\left\| h\right\| \left| B_{R}\setminus
B_{r}\right| ^{\frac{\tilde{q}-q}{p}}\left( \int_{B_{R}\setminus
B_{r}}\left| u\right| ^{p}dx\right) ^{\frac{q-1}{p}}.
\end{eqnarray*}

If $q=\tilde{q}$ the thesis plainly follows.
Otherwise, if $q>\tilde{q}$, then by (\ref{PointwiseEstimate}) we obtain 
\begin{eqnarray*}
&&\int_{B_{R}\setminus B_{r}}K\left( \left| x\right| \right) \left| u\right|
^{q-1}\left| h\right| dx \\
&\leq &S_{N,p}\left\| K\left( \left| \cdot \right|
\right) \right\| _{L^{s}(B_{R}\setminus B_{r})}\left\| h\right\| \left(
\int_{B_{R}\setminus B_{r}}\left| u\right| ^{p\frac{q-1}{\tilde{q}-1}%
-p}\left| u\right| ^{p}dx\right) ^{\frac{\tilde{q}-1}{p}} \\
&\leq &S_{N,p}\left\| K\left( \left| \cdot \right| \right) \right\|
_{L^{s}(B_{R}\setminus B_{r})}\left\| h\right\| \left( \left( \frac{%
C_{N,p}\left\| u\right\| }{r^{\frac{N-p}{p}}}\right) ^{p\frac{q-\tilde{q}}{%
\tilde{q}-1}}\int_{B_{R}\setminus B_{r}}\left| u\right| ^{p}dx\right) ^{%
\frac{\tilde{q}-1}{p}} \\
&=&S_{N,p}\left\| K\left( \left| \cdot \right| \right) \right\|
_{L^{s}(B_{R}\setminus B_{r})}\left\| h\right\| \left( \frac{C_{N,p}\left\|
u\right\| }{r^{\frac{N-p}{p}}}\right) ^{q-\tilde{q}}\left(
\int_{B_{R}\setminus B_{r}}\left| u\right| ^{p}dx\right) ^{\frac{\tilde{q}-1%
}{p}}.
\end{eqnarray*}
This concludes the proof.%
\endproof%

For future reference, we recall here some results from \cite{BPR} concerning
the sum space 
\[
L_{K}^{p_{1}}+L_{K}^{p_{2}}:=L_{K}^{p_{1}}\left( \mathbb{R}^{N}\right)
+L_{K}^{p_{2}}\left( \mathbb{R}^{N}\right) :=\left\{ u_{1}+u_{2}:u_{1}\in
L_{K}^{p_{1}}\left( \mathbb{R}^{N}\right) ,\,u_{2}\in L_{K}^{p_{2}}\left( \mathbb{R%
}^{N}\right) \right\} , 
\]
where we assume $1<p_{1}\leq p_{2}<\infty $. Such a space can be
characterized as the set of the measurable mappings $u:\mathbb{R}%
^{N}\rightarrow \mathbb{R}$ for which there exists a measurable set $E\subseteq 
\mathbb{R}^{N}$ such that $u\in L_{K}^{p_{1}}\left( E\right) \cap
L_{K}^{p_{2}}\left( E^{c}\right) $ (of course $L_{K}^{p_{1}}\left( E\right)
:=L^{p_{1}}(E,K\left( \left| x\right| \right) dx)$, and so for $%
L_{K}^{p_{2}}\left( E^{c}\right) $). It is a Banach space with respect to
the norm 
\[
\left\| u\right\| _{L_{K}^{p_{1}}+L_{K}^{p_{2}}}:=\inf_{u_{1}+u_{2}=u}\max
\left\{ \left\| u_{1}\right\| _{L_{K}^{p_{1}}},\left\| u_{2}\right\|
_{L_{K}^{p_{2}}}\right\} 
\]
and the continuous embedding $L_{K}^{p}\hookrightarrow
L_{K}^{p_{1}}+L_{K}^{p_{2}}$ holds for all $p\in \left[ p_{1},p_{2}\right] $.

\begin{prop}[{\cite[Proposition 2.7]{BPR}}] 
\label{Prop(->0)}
Let $\left\{ u_{n}\right\}
\subseteq L_{K}^{p_{1}}+L_{K}^{p_{2}}$ be a sequence such that $\forall
\varepsilon >0$ there exist $n_{\varepsilon }>0$ and a sequence of
measurable sets $E_{\varepsilon ,n}\subseteq \mathbb{R}^{N}$ satisfying 
\begin{equation}
\forall n>n_{\varepsilon },\quad \int_{E_{\varepsilon ,n}}K\left( \left|
x\right| \right) \left| u_{n}\right| ^{p_{1}}dx+\int_{E_{\varepsilon
,n}^{c}}K\left( \left| x\right| \right) \left| u_{n}\right|
^{p_{2}}dx<\varepsilon .  \label{Prop(->0): cond}
\end{equation}
Then $u_{n}\rightarrow 0$ in $L_{K}^{p_{1}}+L_{K}^{p_{2}}$.
\end{prop}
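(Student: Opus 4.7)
The idea is that the hypothesis is essentially a rephrasing of what the sum-norm controls, so the proof should be a direct unravelling of definitions. Given $\varepsilon > 0$, I would take the threshold $n_\varepsilon$ and the sets $E_{\varepsilon,n}$ supplied by the hypothesis, and for each $n > n_\varepsilon$ split
\[
u_n = u_{1,n} + u_{2,n}, \qquad u_{1,n} := u_n \chi_{E_{\varepsilon,n}}, \quad u_{2,n} := u_n \chi_{E_{\varepsilon,n}^c}.
\]
The hypothesis (\ref{Prop(->0): cond}) then reads exactly
\[
\int_{\mathbb{R}^N} K(|x|)\, |u_{1,n}|^{p_1}\, dx + \int_{\mathbb{R}^N} K(|x|)\, |u_{2,n}|^{p_2}\, dx < \varepsilon,
\]
so in particular $u_{1,n} \in L_K^{p_1}(\mathbb{R}^N)$ and $u_{2,n} \in L_K^{p_2}(\mathbb{R}^N)$, and each of the two integrals is individually less than $\varepsilon$.

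From the very definition of the sum norm,
\[
\|u_n\|_{L_K^{p_1}+L_K^{p_2}} \leq \max\bigl\{\|u_{1,n}\|_{L_K^{p_1}},\,\|u_{2,n}\|_{L_K^{p_2}}\bigr\} \leq \max\bigl\{\varepsilon^{1/p_1},\,\varepsilon^{1/p_2}\bigr\}
\]
for every $n > n_\varepsilon$. Restricting to $\varepsilon < 1$, the assumption $p_1 \leq p_2$ gives $1/p_1 \geq 1/p_2$, hence $\varepsilon^{1/p_1} \leq \varepsilon^{1/p_2}$, and the right-hand side reduces to $\varepsilon^{1/p_2}$. Letting $\varepsilon \to 0^+$ concludes that $\|u_n\|_{L_K^{p_1}+L_K^{p_2}} \to 0$.

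There is no real obstacle here; the content of the proposition is that the admissible decomposition of each $u_n$ into an $L_K^{p_1}$ and an $L_K^{p_2}$ piece, which appears in the infimum defining the sum norm, can be read off directly from the sets $E_{\varepsilon,n}$ provided by the hypothesis. The only small bookkeeping item is the $\max$ step, where one has to remember that for small $\varepsilon$ the exponent $1/p_2$ (not $1/p_1$) produces the dominant bound because $p_1 \leq p_2$.
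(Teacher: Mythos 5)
Your argument is correct and complete: the characteristic-function decomposition $u_n=u_n\chi_{E_{\varepsilon,n}}+u_n\chi_{E_{\varepsilon,n}^c}$ is admissible in the infimum defining the sum norm, and the hypothesis bounds each piece's norm by $\varepsilon^{1/p_i}$, which is exactly how this fact is proved. Note that the paper itself gives no proof here --- it imports the statement from \cite[Proposition 2.7]{BPR} --- so your direct unravelling of the definition is the natural self-contained substitute and matches the standard argument.
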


\begin{prop}[{\cite[Propositions 2.17 and 2.14, Corollary 2.19]{BPR}}]
\label{Prop(L+L)}
Let $E\subseteq \mathbb{R}^{N}$ be a measurable set.

\begin{itemize}
\item[(i)]  If $\int_{E}K\left( \left| x\right| \right) dx<\infty $, then $%
L_{K}^{p_{1}}+L_{K}^{p_{2}}$ is continuously embedded into $%
L_{K}^{p_{1}}\left( E\right) $.

\item[(ii)]  Every $u\in (L_{K}^{p_{1}}+L_{K}^{p_{2}})\cap L^{\infty }\left(
E\right) $ satisfies 
\begin{equation}
\left\| u\right\| _{L_{K}^{p_{2}}\left( E\right) }^{p_{2}/p_{1}}\leq \left(
\left\| u\right\| _{L^{\infty }\left( E\right) }^{p_{2}/p_{1}-1}+\left\|
u\right\| _{L_{K}^{p_{2}}\left( E\right) }^{p_{2}/p_{1}-1}\right) \left\|
u\right\| _{L_{K}^{p_{1}}+L_{K}^{p_{2}}}.  \label{PropLL:1}
\end{equation}
If moreover $\left\| u\right\| _{L^{\infty }\left( E\right) }\leq 1$, then 
\begin{equation}
\left\| u\right\| _{L_{K}^{p_{2}}\left( E\right) }\leq 2\left\| u\right\|
_{L_{K}^{p_{1}}+L_{K}^{p_{2}}}+1.  \label{PropLL:2}
\end{equation}
\end{itemize}
\end{prop}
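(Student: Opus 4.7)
The goal is to show $\mathcal{R}_0(q_1, R) \to 0$ as $R \to 0^+$ whenever $(\alpha_0, q_1) \in \mathcal{A}_{\beta_0, \gamma_0}$. Fix $u \in W_r$, $h \in W$ with $\|u\| = \|h\| = 1$ and $R \le R_1$. Using $K(r) \le C r^{\alpha_0} V(r)^{\beta_0}$, it suffices to bound
\[
I := \int_{B_R} r^{\alpha_0} V(|x|)^{\beta_0} |u|^{q_1 - 1} |h| \, dx
\]
by a positive power of $R$. The tools at hand are the unit-norm bounds $\int V|u|^p,\,\int V|h|^p \le 1$; the Sobolev bound $\|h\|_{L^{p^*}} \le S_{N,p}$ from (\ref{Sobolev}); the pointwise radial estimate (\ref{PointwiseEstimate}); and the lower bound $V(r) \ge c r^{-\gamma_0}$ from (\ref{stima in 0}), which converts negative powers of $V$ into positive powers of $r$.

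The plan is to apply H\"older's inequality to decompose the integrand. Two natural splits produce the two threshold exponents appearing in (\ref{A:=}). The first writes the integrand as
\[
(V^{1/p}|u|)^{b} \cdot (V^{1/p}|h|) \cdot r^{\alpha_0} V^{\beta_0 - b/p - 1/p} |u|^{q_1 - 1 - b}
\]
and applies H\"older with exponents $p/b,\,p,\,s=(1-b/p-1/p)^{-1}$; the first two factors are bounded by the unit-norm conditions, and in the third the exponent of $V$ is made nonpositive by choosing $b \ge p\beta_0 - 1$, so that (\ref{stima in 0}) converts it to a power of $r$, and (\ref{PointwiseEstimate}) converts $|u|^{q_1-1-b}$ to another power of $r$; the resulting $\int_{B_R} r^{\eta s}\,dx = C R^{\eta s + N}$ yields, after optimizing $b$ within its admissible range and requiring strict positivity of $\eta + N/s$, the condition $q_1 < q_*(\alpha_0, \beta_0, \gamma_0)$. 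The second split replaces the middle factor $V^{1/p}|h|$ by $|h|$, applies H\"older with exponents $p/b,\,p^*,\,\tau=(1-b/p-1/p^*)^{-1}$ and controls $|h|$ via Sobolev; the analogous computation produces the second condition $q_1 < q_{**}(\alpha_0, \beta_0, \gamma_0)$.

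The five cases of (\ref{A:=}) correspond to different configurations of these thresholds. For $p \le \gamma_0 < N$ both are finite upper bounds, so the constraint is $q_1 < \min\{q_*, q_{**}\}$. The borderline values $\gamma_0 = N$ and $\gamma_0 = \frac{p}{p-1}(N-1)$ are exactly the points where the denominators of $q_*$ and $q_{**}$ respectively vanish: there one H\"older exponent diverges and must be replaced by an $L^{\infty}$ factor, producing the linear constraints $\alpha > -(1-\beta)N$ and $\alpha > -(1-\beta)\gamma$ of cases two and four. For $\gamma_0$ beyond these thresholds, the would-be upper bounds flip sign and become lower bounds on $q_1$, matching cases three and five. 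I expect the main obstacle to be the careful case-by-case bookkeeping: for each $(\alpha_0, q_1) \in \mathcal{A}_{\beta_0, \gamma_0}$ one must exhibit an admissible $b$ so that (i) all H\"older exponents are positive and sum to $1$, (ii) the residual power of $V$ is nonpositive so (\ref{stima in 0}) applies, (iii) the residual power of $|u|$ is nonnegative so (\ref{PointwiseEstimate}) applies, and (iv) the final $R$-exponent is strictly positive; the borderline $\gamma_0$-values force a modified split using an $L^{\infty}$ estimate. This extensive but mechanical algebraic verification is presumably what the authors defer to the Appendix.
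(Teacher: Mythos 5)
Your proposal does not address the statement at hand. The statement is Proposition \ref{Prop(L+L)}, which concerns two structural properties of the sum space $L_{K}^{p_{1}}+L_{K}^{p_{2}}$: (i) its continuous embedding into $L_{K}^{p_{1}}(E)$ when $\int_{E}K\left(\left|x\right|\right)dx<\infty$, and (ii) the norm inequalities (\ref{PropLL:1}) and (\ref{PropLL:2}) for elements that are also in $L^{\infty}(E)$. In the paper this proposition is not proved at all: it is quoted verbatim from \cite{BPR} (Propositions 2.17 and 2.14 and Corollary 2.19 there) and used as a black box in the proof of part (iii) of Theorem \ref{THM(cpt)}. What you have written instead is a proof sketch for Theorem \ref{THM3} --- the claim that $\mathcal{R}_{0}(q_{1},R)\rightarrow 0$ as $R\rightarrow 0^{+}$ for $(\alpha_{0},q_{1})\in\mathcal{A}_{\beta_{0},\gamma_{0}}$ --- which is an entirely different result. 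Nothing in your argument touches the sum-space norm $\left\|\cdot\right\|_{L_{K}^{p_{1}}+L_{K}^{p_{2}}}$, the decomposition $u=u_{1}+u_{2}$, or the role of the $L^{\infty}$ bound, all of which are the substance of the proposition.

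For the record, a proof of the actual statement would go roughly as follows. For (i), take any decomposition $u=u_{1}+u_{2}$ with $u_{i}\in L_{K}^{p_{i}}$; H\"older's inequality on $E$ with respect to the finite measure $K\left(\left|x\right|\right)dx$ gives $\left\|u_{2}\right\|_{L_{K}^{p_{1}}(E)}\leq\left(\int_{E}K\,dx\right)^{1/p_{1}-1/p_{2}}\left\|u_{2}\right\|_{L_{K}^{p_{2}}(E)}$, whence $\left\|u\right\|_{L_{K}^{p_{1}}(E)}\leq C\max\left\{\left\|u_{1}\right\|_{L_{K}^{p_{1}}},\left\|u_{2}\right\|_{L_{K}^{p_{2}}}\right\}$, and taking the infimum over decompositions yields the embedding. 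For (ii), one writes $\left\|u\right\|_{L_{K}^{p_{2}}(E)}^{p_{2}/p_{1}}$ as an $L_{K}^{p_{1}}$-type quantity applied to $|u|^{p_{2}/p_{1}}$, splits $u=u_{1}+u_{2}$, and controls the two pieces by $\left\|u\right\|_{L^{\infty}(E)}^{p_{2}/p_{1}-1}\left\|u_{1}\right\|_{L_{K}^{p_{1}}}$ and $\left\|u\right\|_{L_{K}^{p_{2}}(E)}^{p_{2}/p_{1}-1}\left\|u_{2}\right\|_{L_{K}^{p_{2}}}$ respectively, again optimizing over decompositions; (\ref{PropLL:2}) then follows from (\ref{PropLL:1}) by a short algebraic manipulation when $\left\|u\right\|_{L^{\infty}(E)}\leq 1$. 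You should either supply an argument along these lines or, as the authors do, cite \cite{BPR}.
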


Recall the definitions (\ref{S_o :=})-(\ref{S_i :=}) of the functions $\mathcal{S}_{0}$ and $%
\mathcal{S}_{\infty }$.

\proof[Proof of Theorem \ref{THM(cpt)}]
We prove each part of the theorem separately.\smallskip

\noindent (i) By the monotonicity of $\mathcal{S}_{0}$ and $\mathcal{S}%
_{\infty }$, it is not restrictive to assume $R_{1}<R_{2}$ in hypothesis $%
\left( \mathcal{S}_{q_{1},q_{2}}^{\prime }\right) $. In order to prove the
continuous embedding, let $u\in W_r$, $u\neq 0$. Then we
have 
\begin{equation}
\int_{B_{R_{1}}}K\left( \left| x\right| \right) \left| u\right|
^{q_{1}}dx=\left\| u\right\| ^{q_{1}}\int_{B_{R_{1}}}K\left( \left| x\right|
\right) \frac{\left| u\right| ^{q_{1}}}{\left\| u\right\| ^{q_{1}}}dx\leq
\left\| u\right\| ^{q_{1}}\mathcal{S}_{0}\left( q_{1},R_{1}\right) 
\label{pf1}
\end{equation}
and, similarly, 
\begin{equation}
\int_{B_{R_{2}}^{c}}K\left( \left| x\right| \right) \left| u\right|
^{q_{2}}dx\leq \left\| u\right\| ^{q_{2}}\mathcal{S}_{\infty }\left(
q_{2},R_{2}\right) .  \label{pf2}
\end{equation}
We now use (\ref{LEM:corone}) of Lemma \ref{Lem(corone)}
and the continuous embedding 
$$W_r = D_{\mathrm{rad}}^{1,p}(\mathbb{R}^{N})\cap  L^p (\mathbb{R}^N,V(|x|)dx)\hookrightarrow L_{\mathrm{loc}}^{p}(\mathbb{R}^{N})$$ 
to deduce that
there exists a constant $C_{1}>0$, independent from $u$, such that 
\begin{equation}
\int_{B_{R_{2}}\setminus B_{R_{1}}}K\left( \left| x\right| \right) \left|
u\right| ^{q_{1}}dx\leq C_{1}\left\| u\right\| ^{q_{1}}.  \label{pf3}
\end{equation}
Hence $u\in L_{K}^{q_{1}}(B_{R_{2}})\cap L_{K}^{q_{2}}(B_{R_{2}}^{c})$ and
thus $u\in L_{K}^{q_{1}}+L_{K}^{q_{2}}$. Moreover, if $u_{n}\rightarrow 0$
in $W_r$, then, using (\ref{pf1}), (\ref{pf2}) and (\ref
{pf3}), we get 
\[
\int_{B_{R_{2}}}K\left( \left| x\right| \right) \left| u_{n}\right|
^{q_{1}}dx+\int_{B_{R_{2}}^{c}}K\left( \left| x\right| \right) \left|
u_{n}\right| ^{q_{2}}dx=o\left( 1\right) _{n\rightarrow \infty },
\]
which means $u_{n}\rightarrow 0$ in $L_{K}^{q_{1}}+L_{K}^{q_{2}}$ by
Proposition \ref{Prop(->0)}. \emph{\smallskip }

\noindent (ii) Assume hypothesis $\left( \mathcal{S}_{q_{1},q_{2}}^{\prime
\prime }\right) $. Let $\varepsilon >0$ and let $u_{n}\rightharpoonup 0$ in $%
W_r$. Then $\left\{ \left\| u_{n}\right\| \right\} $ is
bounded and, arguing as for (\ref{pf1}) and (\ref{pf2}), we can take $%
r_{\varepsilon }>0$ and $R_{\varepsilon }>r_{\varepsilon }$ such that for
all $n$ one has 
\[
\int_{B_{r_{\varepsilon }}}K\left( \left| x\right| \right) \left|
u_{n}\right| ^{q_{1}}dx\leq \left\| u_{n}\right\| ^{q_{1}}\mathcal{S}%
_{0}\left( q_{1},r_{\varepsilon }\right) \leq \sup_{n}\left\| u_{n}\right\|
^{q_{1}}\mathcal{S}_{0}\left( q_{1},r_{\varepsilon }\right) <\frac{%
\varepsilon }{3}
\]
and 
\[
\int_{B_{R_{\varepsilon }}^{c}}K\left( \left| x\right| \right) \left|
u_{n}\right| ^{q_{2}}dx\leq \sup_{n}\left\| u_{n}\right\| ^{q_{2}}\mathcal{S}%
_{\infty }\left( q_{2},R_{\varepsilon }\right) <\frac{\varepsilon }{3}.
\]
Using (\ref{LEM:corone}) of Lemma \ref{Lem(corone)} and the boundedness of $\left\{ \left\|
u_{n}\right\| \right\} $ again, we infer that there exist two constants $%
C_{2},l>0$, independent from $n$, such that 
\[
\int_{B_{R_{\varepsilon }}\setminus B_{r_{\varepsilon }}}K\left( \left|
x\right| \right) \left| u_{n}\right| ^{q_{1}}dx\leq C_{2}\left(
\int_{B_{R_{\varepsilon }}\setminus B_{r_{\varepsilon }}}\left| u_{n}\right|
^{p}dx\right) ^{l},
\]
where 
\[
\int_{B_{R_{\varepsilon }}\setminus B_{r_{\varepsilon }}}\left| u_{n}\right|
^{p}dx\rightarrow 0\quad \text{as }n\rightarrow \infty \quad \text{(}%
\varepsilon ~\text{fixed)}
\]
thanks to the compactness of the embedding $D_{\mathrm{rad}}^{1,p}(\mathbb{R}%
^{N})\hookrightarrow L_{\mathrm{loc}}^{p}(\mathbb{R}^{N})$. Therefore we obtain 
\[
\int_{B_{R_{\varepsilon }}}K\left( \left| x\right| \right) \left|
u_{n}\right| ^{q_{1}}dx+\int_{B_{R_{\varepsilon }}^{c}}K\left( \left|
x\right| \right) \left| u_{n}\right| ^{q_{2}}dx<\varepsilon 
\]
for all $n$ sufficiently large, which means $u_{n}\rightarrow 0$ in $%
L_{K}^{q_{1}}+L_{K}^{q_{2}}$ (Proposition \ref{Prop(->0)}). This concludes
the proof of part (ii).\smallskip

\noindent (iii) First we observe that $K\left( \left| \cdot \right| \right)
\in L^{1}(B_{1})$ and assumption $\left( \mathbf{K}\right) $ imply $K\left(
\left| \cdot \right| \right) \in L_{\mathrm{loc}}^{1}(\mathbb{R}^{N})$. Assume $%
W_r\hookrightarrow L_{K}^{q_{1}}+L_{K}^{q_{2}}$ with $%
q_{1}\leq q_{2}$. Fix $R_{1}>0$. Then, by (i) of Proposition \ref{Prop(L+L)}%
, there exist two constants $c_{1},c_{2}>0$ such that $\forall u\in W_r$ we have 
\[
\int_{B_{R_{1}}}K\left( \left| x\right| \right) \left| u\right|
^{q_{1}}dx\leq c_{1}\left\| u\right\|
_{L_{K}^{q_{1}}+L_{K}^{q_{2}}}^{q_{1}}\leq c_{2}\left\| u\right\| ^{q_{1}}, 
\]
which implies $\mathcal{S}_{0}\left( q_{1},R_{1}\right) \leq c_{2}$. By (\ref
{PointwiseEstimate}), fix $R_{2}>0$ such that every $u\in W_r$ with 
$\left\| u\right\| =1$ satisfies $\left| u\left( x\right)\right| \leq 1$ 
almost everywhere on $B_{R_{2}}^{c}$. Then, by (\ref{PropLL:2}), we have 
\[
\int_{B_{R_{2}}^{c}}K\left( \left| x\right| \right) \left| u\right|
^{q_{2}}dx\leq \left( 2\left\| u\right\|
_{L_{K}^{q_{1}}+L_{K}^{q_{2}}}+1\right) ^{q_{2}}\leq \left( c_{3}\left\|
u\right\| +1\right) ^{q_{2}}=\left( c_{3}+1\right) ^{q_{2}} 
\]
for some constant $c_{3}>0$. This gives $\mathcal{S}_{\infty }\left(
q_{2},R_{2}\right) <\infty $ and thus $\left( \mathcal{S}_{q_{1},q_{2}}^{%
\prime }\right) $ holds (with $R_{1}>0$ arbitrary and $R_{2}$ large enough).
Now assume that the embedding $H_{V,\mathrm{r}}^{1}\hookrightarrow
L_{K}^{q_{1}}+L_{K}^{q_{2}}$ is compact and, by contradiction, that $%
\lim_{R\rightarrow 0^{+}}\mathcal{S}_{0}\left( q_{1},R\right) >\varepsilon
_{1}>0$ (the limit exists by monotonicity). Then for every $n\in \mathbb{N}%
\setminus \left\{ 0\right\} $ we have $\mathcal{S}_{0}\left(
q_{1},1/n\right) >\varepsilon _{1}$ and thus there exists $u_{n}\in W_r$ 
such that $\left\| u_{n}\right\| =1$ and 
\[
\int_{B_{1/n}}K\left( \left| x\right| \right) \left| u_{n}\right|
^{q_{1}}dx>\varepsilon _{1}. 
\]
Since $\left\{ u_{n}\right\} $ is bounded in $W_r$, by the
compactness assumption together with the continuous embedding $%
L_{K}^{q_{1}}+L_{K}^{q_{2}}\hookrightarrow L_{K}^{q_{1}}(B_{1})$ ((i) of
Proposition \ref{Prop(L+L)}), we get that there exists $u\in W_r$ 
such that, up to a subsequence, $u_{n}\rightarrow u$ in $L_{K}^{q_{1}}(B_{1})$. 
This implies 
\[
\int_{B_{1/n}}K\left( \left| x\right| \right) \left| u_{n}\right|
^{q_{1}}dx\rightarrow 0\quad \text{as }n\rightarrow \infty , 
\]
which is a contradiction. Similarly, if $\lim_{R\rightarrow +\infty }%
\mathcal{S}_{\infty }\left( q_{2},R\right) >\varepsilon _{2}>0$, then there
exists a sequence $\left\{ u_{n}\right\} \subset W_r$ such
that $\left\| u_{n}\right\| =1$ and 
\begin{equation}
\int_{B_{n}^{c}}K\left( \left| x\right| \right) \left| u_{n}\right|
^{q_{2}}dx>\varepsilon _{2}.  \label{>eps2}
\end{equation}
Moreover, we can assume that $\exists u\in W_r$ such that 
$u_{n}\rightharpoonup u$ in $W_r$, $u_{n}\rightarrow u$ in $L_{K}^{q_{1}}+L_{K}^{q_{2}}$ and 
\begin{equation}
\left\| u_{n}-u\right\| \leq \left\| u_{n}\right\| +\left\| u\right\| \leq
1+\liminf_{n\rightarrow \infty }\left\| u_{n}\right\| =2.  \label{un-u}
\end{equation}
Now, by (\ref{un-u}) and (\ref{PointwiseEstimate}), fix $R_{2}>0$ such that $%
\left| u_{n}\left( x\right) -u\left( x\right) \right| \leq 1$ almost
everywhere on $B_{R_{2}}^{c}$. Then $\left\{ u_{n}-u\right\} $ is bounded in 
$L_{K}^{q_{2}}(B_{R_{2}}^{c})$ by (\ref{PropLL:2}) and therefore (\ref
{PropLL:1}) gives 
\[
\int_{B_{R_{2}}^{c}}K\left( \left| x\right| \right) \left| u_{n}-u\right|
^{q_{2}}dx\leq c_{4}\left( \left\| u_{n}-u\right\|
_{L_{K}^{q_{1}}+L_{K}^{q_{2}}}\right) ^{q_{1}}\rightarrow 0\quad \text{as }%
n\rightarrow \infty 
\]
for some constant $c_{4}>0$. 
Since $u\in L_{K}^{q_{2}}(B_{R_{2}}^{c})$ by (\ref{PointwiseEstimate}) and (\ref{PropLL:1}), 
this implies 
\[
\int_{B_{n}^{c}}K\left( \left| x\right| \right) \left| u_{n}\right|
^{q_{2}}dx\rightarrow 0\quad \text{as }n\rightarrow \infty , 
\]
which contradicts (\ref{>eps2}). Hence we conclude $\lim_{R\rightarrow 0^{+}}%
\mathcal{S}_{0}\left( q_{1},R\right) =\lim_{R\rightarrow +\infty }\mathcal{S}%
_{\infty }\left( q_{2},R\right)$ $=0$, which completes the proof.%
\endproof%

\section{Proof of Theorems \ref{THM0}\thinspace -\thinspace \ref{THM3} \label%
{SEC:2}}

Assume $1<p<N$ and let $V$ and $K$ be as in $\left( \mathbf{V}\right)$ and $\left( \mathbf{K}\right)$. 
As in the previous section, we fix a constant $S_{N,p}>0$ such that (\ref{Sobolev}) holds.

\begin{lem}
\label{Lem(Omega)}Let $\Omega \subseteq \mathbb{R}^{N}$ be a nonempty
measurable set and assume that $V(r)<+\infty$ almost everywhere in $\Omega$ and
\[
\Lambda :=\esssup_{x\in \Omega }\frac{K\left( \left| x\right|
\right) }{\left| x\right| ^{\alpha }V\left( \left| x\right| \right) ^{\beta }%
}<+\infty \quad \text{for some }0\leq \beta \leq 1\text{~and }\alpha \in 
\mathbb{R}.
\]
Let $u\in W$ and assume that there exist $\nu \in \mathbb{R}$ and $m>0$
such that 
\[
\left| u\left( x\right) \right| \leq \frac{m}{\left| x\right| ^{\nu }}\quad 
\text{almost everywhere on }\Omega .
\]
Then $\forall h\in W$ and $\forall q>\max \left\{ 1,p\beta \right\} $%
, one has 

$\displaystyle\int_{\Omega }K\left( \left| x\right| \right) \left| u\right| ^{q-1}\left|
h\right| dx$
\[
\leq \left\{ 
\begin{array}{ll}
\Lambda m^{q-1}S_{N,p}^{1-p\beta }\left( \int_{\Omega }\left| x\right| ^{\frac{%
\alpha -\nu \left( q-1\right) }{N(p-1)+p\left( 1-p\beta \right) }pN}dx\right) ^{%
\frac{N(p-1)+p\left( 1-p\beta \right) }{pN}}\left\| h\right\| \quad \medskip  & 
\text{if }0\leq \beta \leq \frac{1}{p} \\ 
\Lambda m^{q-p\beta }\left( \int_{\Omega }\left| x\right| ^{\frac{\alpha
-\nu \left( q-p\beta \right) }{1-\beta }}dx\right) ^{1-\beta }\left\|
u\right\| ^{p\beta -1}\left\| h\right\| \medskip  & \text{if }\frac{1}{p}%
<\beta <1 \\ 
\Lambda m^{q-p}\left( \int_{\Omega }\left| x\right| ^{\frac{p}{p-1}(\alpha -\nu \left(
q-p\right)) }V\left( \left| x\right| \right) \left| u\right| ^{p}dx\right) ^{%
\frac{p-1}{p}}\left\| h\right\|  & \text{if }\beta =1.
\end{array}
\right. 
\]
\end{lem}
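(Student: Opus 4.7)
\textbf{Proof plan for Lemma \ref{Lem(Omega)}.} The strategy is uniform across the three cases: first I will use the pointwise estimate $K(|x|)\le\Lambda |x|^\alpha V(|x|)^\beta$ together with the bound $|u(x)|\le m/|x|^\nu$ on $\Omega$ to reduce the left-hand side to
\[
\Lambda m^{q-t}\int_\Omega |x|^{\alpha-\nu(q-t)}\,V(|x|)^\beta\,|u|^{t-1}\,|h|\,dx,
\]
for an appropriate ``surviving'' power $t\in\{p,\ p\beta,\ 1\}$ of $|u|$, chosen so that after a suitable H\"older splitting the residual $|u|$ factor pairs with $V^\beta$ to produce $\|u\|^{\text{power}}$ via the elementary estimate $\int_\Omega V|u|^p\le\|u\|^p$. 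Then a two- or three-factor H\"older inequality will split the integrand into the stated weight integral, a factor controlled by $\|u\|$ (only when $\beta>1/p$), and a factor controlled by $\|h\|$, with the Sobolev estimate (\ref{Sobolev}) entering only in the case $\beta\le 1/p$.

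When $\beta=1$ the hypothesis $q>p\beta$ gives $q>p$, so I take $t=p$; the integrand then factors as $\bigl(|x|^{\alpha-\nu(q-p)}V^{(p-1)/p}|u|^{p-1}\bigr)\cdot\bigl(V^{1/p}|h|\bigr)$ and a H\"older with conjugate exponents $p'$ and $p$ immediately produces the stated bound, the $|u|$-piece being kept inside the weight integral precisely because no Sobolev is used here. When $1/p<\beta<1$ I take $t=p\beta$ and decompose the integrand into the three pieces
\[
|x|^{\alpha-\nu(q-p\beta)}\ \cdot\ (V|u|^p)^{(p\beta-1)/p}\ \cdot\ (V|h|^p)^{1/p},
\]
and apply H\"older with exponents $1/(1-\beta),\ p/(p\beta-1),\ p$, whose reciprocals sum to $1$; the second and third factors yield $\|u\|^{p\beta-1}$ and $\|h\|$ respectively, and the first gives the weight integral with exponent $(\alpha-\nu(q-p\beta))/(1-\beta)$.

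When $0\le\beta\le 1/p$ I take $t=1$ (legitimate since $q>1$) and write $V^\beta|h|=(V|h|^p)^\beta\cdot|h|^{1-p\beta}$. Applying H\"older with exponents $a,\ 1/\beta,\ p^*/(1-p\beta)$, where $1/a=(N(p-1)+p(1-p\beta))/(Np)$, one checks that $\beta+(1-p\beta)/p^*+1/a=1$; the second factor produces $\|h\|^{p\beta}$, while Sobolev (\ref{Sobolev}) applied to the third gives $S_{N,p}^{1-p\beta}\|h\|^{1-p\beta}$, and together they collapse to $S_{N,p}^{1-p\beta}\|h\|$, matching the stated formula. The degenerate endpoints $\beta=0$ and $\beta=1/p$ reduce the three-factor H\"older to a two-factor one, with exponents $(p^*)'$ and $p^*$ in the first case and $p'$ and $p$ in the second, and need no separate treatment.

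The main obstacle is purely bookkeeping: picking the correct $t$ in each regime, choosing the three H\"older exponents so that their reciprocals sum to $1$, and verifying that the powers of $|x|$, $V$, and $|u|$ coming out of each factor reassemble into the formulas stated in the lemma. No new analytic idea is required beyond the pointwise bound on $u$, H\"older's inequality, the trivial estimates $\int V|u|^p\le\|u\|^p$ and $\int V|h|^p\le\|h\|^p$, and the Sobolev inequality (\ref{Sobolev}).
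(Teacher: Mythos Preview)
Your proposal is correct and follows essentially the same route as the paper: bound $K$ pointwise by $\Lambda|x|^\alpha V^\beta$, absorb the excess power of $|u|$ via the decay bound $|u|\le m|x|^{-\nu}$, then apply H\"older to split off factors controlled by $\int V|u|^p$, $\int V|h|^p$, and (when $\beta\le 1/p$) the Sobolev norm of $h$. The only organizational difference is that the paper breaks the range $0\le\beta\le 1/p$ into the three subcases $\beta=0$, $0<\beta<1/p$, $\beta=1/p$ and, in the intermediate ranges, applies H\"older twice in succession rather than once with three exponents; your unified three-factor H\"older (with the endpoints handled as degenerate two-factor cases) is a cleaner packaging of the same computation.
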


\begin{proof}
We distinguish several cases, where we will use H\"{o}lder inequality many
times, without explicitly noting it. \smallskip

\noindent \emph{Case }$\beta =0$\emph{. }

\noindent We have 
{\allowdisplaybreaks
\begin{eqnarray*}
\frac{1}{\Lambda }\int_{\Omega }K\left( \left| x\right| \right) \left|
u\right| ^{q-1}\left| h\right| dx 
&\leq & \int_{\Omega }\left| x\right|^{\alpha }\left| u\right| ^{q-1}\left| h\right| dx \\
&\leq & \left( \int_{\Omega
}\left( \left| x\right| ^{\alpha }\left| u\right| ^{q-1}\right) ^{\frac{pN}{%
N(p-1)+p}}dx\right) ^{\frac{N(p-1)+p}{pN}}\left( \int_{\Omega }\left| h\right|
^{p^{*}}dx\right) ^{\frac{1}{p^{*}}} \\
&\leq & m^{q-1}S_{N,p}\left( \int_{\Omega }\left| x\right| ^{\frac{\alpha -\nu
\left( q-1\right) }{N(p-1)+p}pN}dx\right) ^{\frac{N(p-1)+p}{pN}}\left\| h\right\| .
\end{eqnarray*}
}

\noindent \emph{Case }$0<\beta <1/p$\emph{.}

\noindent One has $\frac{1}{\beta }>1$ and $\frac{1-\beta }{1-p\beta }%
p^{*}>1 $, with H\"{o}lder conjugate exponents $\left( \frac{1}{\beta }%
\right) ^{\prime }=\frac{1}{1-\beta }$ and $\left( \frac{1-\beta }{1-p\beta }%
p^{*}\right) ^{\prime }=\frac{pN\left( 1-\beta \right) }{N(p-1)+p\left( 1-p\beta
\right) }$. Then we get 
{\allowdisplaybreaks
\begin{eqnarray*}
&&\frac{1}{\Lambda }\int_{\Omega }K\left( \left| x\right| \right) \left|
u\right| ^{q-1}\left| h\right| dx 
\\
&\leq &\int_{\Omega }\left| x\right| ^{\alpha }V\left( \left| x\right|
\right) ^{\beta }\left| u\right| ^{q-1}\left| h\right| dx=\int_{\Omega
}\left| x\right| ^{\alpha }\left| u\right| ^{q-1}\left| h\right| ^{1-p\beta
}V\left( \left| x\right| \right) ^{\beta }\left| h\right| ^{p\beta }dx 
\\
&\leq &\left( \int_{\Omega }\left( \left| x\right| ^{\alpha }\left| u\right|
^{q-1}\left| h\right| ^{1-p\beta }\right) ^{\frac{1}{1-\beta }}dx\right)
^{1-\beta }\left( \int_{\Omega }V\left( \left| x\right| \right) \left|
h\right| ^{p}dx\right) ^{\beta } 
\\
&\leq &\left( \left( \int_{\Omega }\left( \left| x\right| ^{\frac{\alpha }{%
1-\beta }}\left| u\right| ^{\frac{q-1}{1-\beta }}\right) ^{\left( \frac{%
1-\beta }{1-p\beta }p^{*}\right) ^{\prime }}dx\right) ^{\frac{1}{\left( 
\frac{1-\beta }{1-p\beta }p^{*}\right) ^{\prime }}}\left( \int_{\Omega
}\left| h\right| ^{p^{*}}dx\right) ^{\frac{1-p\beta }{\left( 1-\beta \right)
p^{*}}}\right) ^{1-\beta }\left\| h\right\| ^{p\beta } 
\\
&\leq &m^{q-1}\left( \left( \int_{\Omega }\left( \left| x\right| ^{\frac{%
\alpha }{1-\beta }-\nu \frac{q-1}{1-\beta }}\right) ^{\left( \frac{1-\beta }{%
1-p\beta }p^{*}\right) ^{\prime }}dx\right) ^{\frac{1}{\left( \frac{1-\beta 
}{1-\beta }p^{*}\right) ^{\prime }}}S_{N,p}^{\frac{1-p\beta }{1-\beta }%
}\left\| h\right\| ^{\frac{1-p\beta }{1-\beta }}\right) ^{1-\beta }\left\|
h\right\| ^{p\beta } 
\\
&=&m^{q-1}\left( \int_{\Omega }\left| x\right| ^{\frac{\alpha -\nu \left(
q-1\right) }{N(p-1)+p\left( 1-p\beta \right) }pN}dx\right) ^{\frac{N(p-1)+p\left(
1-p\beta \right) }{pN}}S_{N,p}^{1-p\beta }\left\| h\right\| .
\end{eqnarray*}
}

\noindent \emph{Case }$\beta =\frac{1}{p}$\emph{.}

\noindent We have 
{\allowdisplaybreaks
\begin{eqnarray*}
\frac{1}{\Lambda }\int_{\Omega }K\left( \left| x\right| \right) \left|
u\right| ^{q-1}\left| h\right| dx &\leq &\int_{\Omega }\left| x\right|
^{\alpha }\left| u\right| ^{q-1}V\left( \left| x\right| \right) ^{\frac{1}{p}%
}\left| h\right| dx \\
&\leq &\left( \int_{\Omega }\left| x\right| ^{\alpha \frac{p}{p-1}}\left| u\right|
^{\left( q-1\right)\frac{p}{p-1} }dx\right) ^{\frac{p-1}{p}}\left( \int_{\Omega }V\left(
\left| x\right| \right) \left| h\right| ^{p}dx\right) ^{\frac{1}{p}} \\
&\leq &m^{q-1}\left( \int_{\Omega }\left| x\right| ^{(\alpha -\nu \left(
q-1\right) )\frac{p}{p-1} }dx\right) ^{\frac{p-1}{p}}\left\| h\right\| .
\end{eqnarray*}
}

\noindent \emph{Case }$1/p<\beta <1$\emph{.}

\noindent One has $\frac{p-1}{p\beta -1}>1$, with H\"{o}lder conjugate
exponent $\left( \frac{p-1}{p\beta -1}\right) ^{\prime }=\frac{p-1}{p\left(
1-\beta \right) }$. Then 
{\allowdisplaybreaks
\begin{eqnarray*}
&&\frac{1}{\Lambda }\int_{\Omega }K\left( \left| x\right| \right) \left|
u\right| ^{q-1}\left| h\right| dx 
\\
&\leq &\int_{\Omega }\left| x\right|
^{\alpha }V\left( \left| x\right| \right) ^{\beta }\left| u\right|
^{q-1}\left| h\right| dx=\int_{\Omega }\left| x\right| ^{\alpha }V\left(
\left| x\right| \right) ^{\frac{p\beta -1}{p}}\left| u\right| ^{q-1}V\left(
\left| x\right| \right) ^{\frac{1}{p}}\left| h\right| dx 
\\
&\leq &\left( \int_{\Omega }\left| x\right| ^{\alpha \frac{p}{p-1}}V\left( \left|
x\right| \right) ^{\frac{p\beta -1}{p-1}}\left| u\right| ^{\left( q-1\right) \frac{p}{p-1}}dx\right)
^{\frac{p-1}{p}}\left( \int_{\Omega }V\left( \left| x\right| \right) \left|
h\right| ^{p}dx\right) ^{\frac{1}{p}} 
\\
&\leq &\left( \int_{\Omega }\left| x\right| ^{\alpha \frac{p}{p-1}}\left| u\right|
^{(q-1)\frac{p}{p-1}- p\frac{p\beta -1}{p-1} }V\left( \left| x\right| \right) ^{\frac{p\beta -1}{p-1}}\left| u\right| ^{p \frac{p\beta -1}{p-1} }dx\right) ^{\frac{p-1}{p}}\left\| h\right\| 
\\
&\leq &\left( \left( \int_{\Omega }\left| x\right| ^{\frac{\alpha }{1-\beta }%
}\left| u\right| ^{\frac{q-p\beta }{1-\beta }}dx\right) ^{\frac{p}{p-1}\left( 1-\beta
\right) }\left( \int_{\Omega }V\left( \left| x\right| \right) \left|
u\right| ^{p}dx\right) ^{\frac{p\beta -1}{p-1}}\right) ^{\frac{p-1}{p}}\left\| h\right\| 
\\
&\leq &m^{q-p\beta }\left( \int_{\Omega }\left| x\right|^{\frac{\alpha }{1-\beta }-\nu \frac{q-p\beta }{1-\beta }}
dx\right) ^{1-\beta }\left( \int_{\Omega }V\left(
\left| x\right| \right) \left| u\right| ^{p}dx\right) ^{\frac{p\beta -1}{p}%
}\left\| h\right\| 
\\
&\leq &m^{q-p\beta }\left( \int_{\Omega }\left| x\right| ^{\frac{\alpha -\nu
(q-p\beta )}{1-\beta }}dx\right) ^{1-\beta }\left\| u\right\| ^{p\beta
-1}\left\| h\right\| .
\end{eqnarray*}
}

\noindent \emph{Case }$\beta =1$\emph{.}

\noindent Assumption $q>\max \left\{ 1,p\beta \right\} $ means $q>p$ and
thus we have 
{\allowdisplaybreaks
\begin{eqnarray*}
&&\frac{1}{\Lambda }\int_{\Omega }K\left( \left| x\right| \right) \left|
u\right| ^{q-1}\left| h\right| dx 
\\
&\leq &\int_{\Omega }\left| x\right|
^{\alpha }V\left( \left| x\right| \right) \left| u\right| ^{q-1}\left|
h\right| dx = \int_{\Omega }\left| x\right| ^{\alpha }V\left( \left| x\right|
\right) ^{\frac{p-1}{p}}\left| u\right| ^{q-1}V\left( \left| x\right| \right)
^{\frac{1}{p}}\left| h\right| dx 
\\
&\leq &\left( \int_{\Omega }\left| x\right| ^{\alpha \frac{p}{p-1}}V\left( \left|
x\right| \right) \left| u\right| ^{\left( q-1\right)\frac{p}{p-1} }dx\right) ^{\frac{p-1}{p%
}}\left( \int_{\Omega }V\left( \left| x\right| \right) \left| h\right|
^{p}dx\right) ^{\frac{1}{p}} 
\\
&\leq &\left( \int_{\Omega }\left| x\right| ^{\alpha \frac{p}{p-1}}\left| u\right|
^{(q-1)\frac{p}{p-1}-p }V\left( \left| x\right| \right) \left| u\right|
^{p}dx\right) ^{\frac{p-1}{p}}\left\| h\right\| 
\\
&\leq &m^{q-p}\left( \int_{\Omega }\left| x\right| ^{\frac{p}{p-1}\left(\alpha -\nu (q-p) \right) }V\left( \left| x\right| \right) \left| u\right| ^{p}dx\right) ^{%
\frac{p-1}{p}}\left\| h\right\| .
\end{eqnarray*}
}
\end{proof}

As in the previous section, we fix a constant $C_{N,p}>0$ such that (\ref
{PointwiseEstimate}) holds. Recall the definitions (\ref{N_o})-(\ref{N_i})
of the functions $\mathcal{R}_{0}$ and $\mathcal{R}_{\infty }$.

\begin{proof}[Proof of Theorem \ref{THM0}]
Assume the hypotheses of the theorem and let $u\in W_r$ and $h\in W$ be
such that $\left\| u\right\| =\left\| h\right\| =1$. Let $0<R\leq R_{1}$. We
will denote by $C$ any positive constant which does not depend on $u$, $h$
and $R$.

By (\ref{PointwiseEstimate}) and the fact that 
\[
\esssup_{x\in B_{R}}\frac{K\left( \left| x\right| \right) }{\left|
x\right| ^{\alpha _{0}}V\left( \left| x\right| \right) ^{\beta _{0}}}\leq 
\esssup_{r\in \left( 0,R_{1}\right) }\frac{K\left( r\right) }{%
r^{\alpha _{0}}V\left( r\right) ^{\beta _{0}}}<+\infty , 
\]
we can apply Lemma \ref{Lem(Omega)} with $\Omega =B_{R}$, $\alpha =\alpha
_{0}$, $\beta =\beta _{0}$, $m=C_{N,p}\left\| u\right\| =C_{N,p}$ and $\nu =%
\frac{N-p}{p}$. If $0\leq \beta _{0}\leq 1/p$ we get 
\begin{eqnarray*}
\int_{B_{R}}K\left( \left| x\right| \right) \left| u\right| ^{q_{1}-1}\left|
h\right| dx &\leq &C\left( \int_{B_{R}}\left| x\right| ^{\frac{\alpha _{0}-%
\frac{N-p}{p}\left( q_{1}-1\right) }{N(p-1)+p\left( 1-p\beta _{0}\right) }%
pN}dx\right) ^{\frac{N(p-1)+p\left( 1-p\beta _{0}\right) }{pN}} 
\\
&\leq &C\left( \int_{0}^{R}r^{\frac{p\alpha _{0}-\left( N-p\right) \left(
q_{1}-1\right) }{N(p-1)+p\left( 1-p\beta _{0}\right) }N+N-1}dr\right) ^{\frac{%
N(p-1)+p\left( 1-p\beta _{0}\right) }{pN}} 
\\
&=&C\left( R^{\frac{p\alpha _{0}-p^2 \beta _{0}+pN-\left( N-p\right) q_{1}}{%
N(p-1)+p\left( 1-p\beta _{0}\right) }N}\right) ^{\frac{N(p-1)+p\left( 1-p\beta
_{0}\right) }{pN}},
\end{eqnarray*}
since 
\[
p\alpha _{0}- p^2 \beta _{0}+pN-\left( N-p\right) q_{1}=\left( N-p\right) \left(
q^{*}\left( \alpha _{0},\beta _{0}\right) -q_{1}\right) >0. 
\]
On the other hand, if $1/p<\beta _{0}<1$ we have 
\begin{eqnarray*}
\int_{B_{R}}K\left( \left| x\right| \right) \left| u\right| ^{q_{1}-1}\left|
h\right| dx 
&\leq & C\left( \int_{B_{R}}\left| x\right| ^{\frac{\alpha _{0}-%
\frac{N-p}{p}\left( q_{1}-p\beta _{0}\right) }{1-\beta _{0}}}dx\right)^{1-\beta _{0}}\\
&\leq & C\left( \int_{0}^{R}r^{\frac{\alpha _{0}-\frac{N-p}{p}%
\left( q_{1}-p\beta _{0}\right) }{1-\beta _{0}}+N-1}dr\right) ^{1-\beta _{0}}
\\
&=&C\left( R^{\frac{p\alpha _{0}-\left( N-p\right) \left( q_{1}-p\beta
_{0}\right) }{p\left( 1-\beta _{0}\right) }+N}\right) ^{1-\beta _{0}},
\end{eqnarray*}
since 
\[
\frac{p\alpha _{0}-\left( N-p\right) \left( q_{1}-p\beta _{0}\right) }{%
p\left( 1-\beta _{0}\right) }+N=\frac{N-p}{p\left( 1-\beta _{0}\right) }%
\left( q^{*}\left( \alpha _{0},\beta _{0}\right) -q_{1}\right) >0. 
\]
Finally, if $\beta _{0}=1$, we obtain 
\begin{eqnarray*}
\int_{B_{R}}K\left( \left| x\right| \right) \left| u\right| ^{q_{1}-1}\left|
h\right| dx &\leq &C\left( \int_{B_{R}}\left| x\right| ^{\frac{1}{p-1}\left( p \alpha _{0}-\left(
N-p\right) \left( q_{1}-p\right) \right) }V\left( \left| x\right| \right) \left|
u\right| ^{p}dx\right) ^{\frac{p-1}{p}} \\
&\leq &C\left( R^{\frac{1}{p-1}\left( p\alpha _{0}-\left( N-p\right) \left( q_{1}-p\right) \right)
}\int_{B_{R}}V\left( \left| x\right| \right) \left| u\right| ^{p}dx\right) ^{%
\frac{p-1}{p}}\\
&\leq & CR^{\frac{p\alpha _{0}-\left( N-p\right) \left(q_{1}-p\right) }{p}},
\end{eqnarray*}
since 
\[
p\alpha _{0}-\left( N-p\right) \left( q_{1}-p\right) =\left( N-p\right)
\left( q^{*}\left( \alpha _{0},1\right) -q_{1}\right) >0. 
\]
So, in any case, we deduce $\mathcal{R}_{0}\left( q_{1},R\right) \leq
CR^{\delta }$ for some $\delta =\delta \left( N,p,\alpha _{0},\beta
_{0},q_{1}\right) >0$ and this concludes the proof.
\end{proof}

\proof[Proof of Theorem \ref{THM1}]
Assume the hypotheses of the theorem and let $u\in W_r$ and $h\in W$ be
such that $\left\| u\right\| =\left\| h\right\| =1$. Let $R\geq R_{2}$. We
will denote by $C$ any positive constant which does not depend on $u$, $h$
and $R$.

By (\ref{PointwiseEstimate}) and the fact that 
\[
\esssup_{x\in B_{R}^{c}}\frac{K\left( \left| x\right| \right) }{%
\left| x\right| ^{\alpha _{\infty }}V\left( \left| x\right| \right) ^{\beta
_{\infty }}}\leq \esssup_{r>R_{2}}\frac{K\left( r\right) }{%
r^{\alpha _{\infty }}V\left( r\right) ^{\beta _{\infty }}}<+\infty , 
\]
we can apply Lemma \ref{Lem(Omega)} with $\Omega =B_{R}^{c}$, $\alpha
=\alpha _{\infty }$, $\beta =\beta _{\infty }$, $m=C_{N,p}\left\| u\right\|
=C_{N,p}$ and $\nu =\frac{N-p}{p}$. If $0\leq \beta _{\infty }\leq 1/p$ we get 
\begin{eqnarray*}
\int_{B_{R}^{c}}K\left( \left| x\right| \right) \left| u\right|
^{q_{2}-1}\left| h\right| dx &\leq &C\left( \int_{B_{R}^{c}}\left| x\right|
^{\frac{\alpha _{\infty }-\frac{N-p}{p}\left( q_{2}-1\right) }{N(p-1)+p\left(
1-p\beta _{\infty }\right) }pN}dx\right) ^{\frac{N(p-1)+p\left( 1-p\beta _{\infty
}\right) }{pN}} \\
&\leq &C\left( \int_{R}^{+\infty }r^{\frac{p\alpha _{\infty }-\left(
N-p\right) \left( q_{2}-1\right) }{N(p-1)+p\left( 1-p\beta _{\infty }\right) }%
N+N-1}dr\right) ^{\frac{N(p-1)+p\left( 1-p\beta _{\infty }\right) }{pN}} \\
&=&C\left( R^{\frac{p\alpha _{\infty }-p^2 \beta _{\infty }+pN-\left(
N-p\right) q_{2}}{N(p-1)+p\left( 1-p\beta _{\infty }\right) }N}\right) ^{\frac{%
N(p-1)+p\left( 1-p\beta _{\infty }\right) }{pN}},
\end{eqnarray*}
since 
\[
p\alpha _{\infty }-p^2 \beta _{\infty }+pN-\left( N-p\right) q_{2}=\left(
N-p\right) \left( q^{*}\left( \alpha _{\infty },\beta _{\infty }\right)
-q_{2}\right) <0. 
\]
On the other hand, if $1/p<\beta _{\infty }<1$ we have 
\begin{eqnarray*}
\int_{B_{R}^{c}}K\left( \left| x\right| \right) \left| u\right|
^{q_{2}-1}\left| h\right| dx &\leq &C\left( \int_{B_{R}^{c}}\left| x\right|
^{\frac{\alpha _{\infty }-\frac{N-p}{p}\left( q_{2}-p\beta _{\infty }\right) 
}{1-\beta _{\infty }}}dx\right) ^{1-\beta _{\infty }} \\
&\leq &C\left( \int_{R}^{+\infty }r^{\frac{\alpha _{\infty }-\frac{N-p}{p}%
\left( q_{2}-p\beta _{\infty }\right) }{1-\beta _{\infty }}+N-1}dr\right)
^{1-\beta _{\infty }} \\
&=&C\left( R^{\frac{p\alpha _{\infty }-\left( N-p\right) \left( q_{2}-p\beta
_{\infty }\right) }{p\left( 1-\beta _{\infty }\right) }+N}\right) ^{1-\beta
_{\infty }},
\end{eqnarray*}
since 
\[
\frac{p\alpha _{\infty }-\left( N-p\right) \left( q_{2}-p\beta _{\infty
}\right) }{p\left( 1-\beta _{\infty }\right) }+N=\frac{N-p}{p\left( 1-\beta
_{_{\infty }}\right) }\left( q^{*}\left( \alpha _{_{\infty }},\beta
_{_{\infty }}\right) -q_{2}\right) <0. 
\]
Finally, if $\beta _{\infty }=1$, we obtain 
\begin{eqnarray*}
\int_{B_{R}^{c}}K\left( \left| x\right| \right) \left| u\right|
^{q_{2}-1}\left| h\right| dx &\leq &C\left( \int_{B_{R}^{c}}\left| x\right|
^{\frac{1}{p-1}(\left( p\alpha _{\infty }-\left( N-p\right) \left( q_{2}-p\right) \right) }V\left( \left|
x\right| \right) \left| u\right| ^{p}dx\right) ^{\frac{p-11}{p}} \\
&\leq &C\left( R^{\frac{1}{p-1} \left( p\alpha _{\infty }-\left( N-p\right) \left( q_{2}-p\right) \right)
}\int_{B_{R}^{c}}V\left( \left| x\right| \right) \left| u\right|
^{p}dx\right) ^{\frac{p-1}{p}}\\
&\leq & CR^{\frac{p\alpha _{\infty }-\left(
N-p\right) \left( q_{2}-p\right) }{p}},
\end{eqnarray*}
since 
\[
p\alpha _{\infty }-\left( N-p\right) \left( q_{2}-p\right) =\left(
N-p\right) \left( q^{*}\left( \alpha _{\infty },1\right) -q_{2}\right) <0. 
\]
So, in any case, we get $\mathcal{R}_{\infty }\left(
q_{2},R\right) \leq CR^{\delta }$ for some $\delta =\delta ( N,p,\alpha_{\infty },\beta _{\infty },q_{2}) <0$, 
which completes the proof.%
\endproof%

In proving Theorem \ref{THM2}, we will need the following lemma.

\begin{lem}
Assume that there exists $R_{2}>0$ such that 
$V(r)<+\infty$ for almost every $r>R_2$ and
\[
\lambda _{\infty }:=\essinf_{r>R_{2}}r^{\gamma _{\infty }}V\left(
r\right) >0\quad \text{for some }\gamma _{\infty }\leq p.
\]
Then there exists a constant $c_{\infty }>0$, only dependent on $N$ and $%
p$, such that 
\begin{equation}
\forall u\in W_r ,\quad \left|
u\left( x\right) \right| \leq c_{\infty }\lambda _{\infty }^{-\frac{p-1}{p^2}%
}\left\| u\right\| \left| x\right| ^{-\frac{p(N-1)-\gamma _{\infty } (p-1)}{p^2}%
}\quad \text{almost everywhere in }B_{R_{2}}^{c}.  \label{PointwiseInfty}
\end{equation}
\end{lem}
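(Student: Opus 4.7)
The plan is to upgrade the standard Strauss pointwise estimate by exploiting not only the Dirichlet energy but also the $V$-weighted $L^{p}$-norm encoded in $\|u\|$, using the lower bound $V(s)\geq \lambda_\infty s^{-\gamma_\infty}$ for $s>R_2$. Writing $\tilde u(r):=u(x)$ for $|x|=r$ and remembering that $u\in W_r\subseteq D^{1,p}_{\mathrm{rad}}(\mathbb{R}^N)$, the pointwise bound (\ref{PointwiseEstimate}) already gives $\tilde u(r)\to 0$ as $r\to+\infty$, and standard radial Sobolev regularity ensures $\tilde u$ is absolutely continuous on any compact subinterval of $(0,+\infty)$. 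The fundamental theorem of calculus then yields
\[
|\tilde u(r)|^{p} \;\leq\; p\int_{r}^{+\infty}|\tilde u(s)|^{p-1}|\tilde u'(s)|\,ds
\qquad\text{for every } r\geq R_{2}.
\]

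The key step is a weighted H\"older inequality: I split the integrand as $(s^{a}|\tilde u|^{p-1})(s^{b}|\tilde u'|)s^{-(a+b)}$ with the weights
\[
a:=\frac{(N-1-\gamma_{\infty})(p-1)}{p},\qquad b:=\frac{N-1}{p},
\]
chosen so that $a p' = N-1-\gamma_{\infty}$ (matching the weight allowed by $V(s)\geq \lambda_{\infty}s^{-\gamma_{\infty}}$) and $b p = N-1$ (matching the radial volume element). A direct computation gives $a+b=(p(N-1)-\gamma_{\infty}(p-1))/p$, which is strictly positive because $\gamma_{\infty}\leq p<\frac{p(N-1)}{p-1}$ (the latter inequality holding since $p<N$). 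Hence for $s\geq r$ one can pull out $s^{-(a+b)}\leq r^{-(a+b)}$, and H\"older's inequality with exponents $p'$ and $p$ gives
\[
\int_{r}^{\infty}|\tilde u|^{p-1}|\tilde u'|\,ds \;\leq\; r^{-(a+b)}\left(\int_{r}^{\infty}s^{N-1-\gamma_{\infty}}|\tilde u|^{p}\,ds\right)^{\!1/p'}\!\left(\int_{r}^{\infty}s^{N-1}|\tilde u'|^{p}\,ds\right)^{\!1/p}.
\]

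Using $s^{-\gamma_{\infty}}\leq \lambda_{\infty}^{-1}V(s)$ on $s\geq R_2$ and reverting to Euclidean integrals via the radial coarea formula, the first factor is bounded by a constant depending only on $N$ times $\lambda_{\infty}^{-1}\int V(|x|)|u|^{p}\,dx$, and the second by a constant times $\int|\nabla u|^{p}\,dx$; both are $\leq c(N)\|u\|^{p}$. Assembling these estimates, the power of $\lambda_{\infty}$ appears only through the first factor as $\lambda_{\infty}^{-1/p'}=\lambda_{\infty}^{-(p-1)/p}$, and after the final $p$-th root this becomes $\lambda_{\infty}^{-(p-1)/p^{2}}$, while $(a+b)/p=(p(N-1)-\gamma_{\infty}(p-1))/p^{2}$ produces exactly the claimed decay exponent. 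The one genuine subtlety is the absolute continuity of $\tilde u$ up to $+\infty$ needed to justify the FTC identity for a general $u\in W_{r}$; if one prefers to avoid invoking this directly, the whole argument can be carried out first on radial $C^{\infty}_{\mathrm{c}}(\mathbb{R}^{N})$ functions (where every step is trivially legitimate) and then extended by a density/Fatou argument, since $C^{\infty}_{\mathrm{c},\mathrm{rad}}(\mathbb{R}^{N})$ is dense in $W_{r}$.
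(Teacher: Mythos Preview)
Your argument is correct and is exactly the weighted Strauss-type estimate underlying the references the paper cites (Lemma~4 of \cite{Su-Wang-Will-p}, and its $p=2$ precursor \cite[Proposition 28]{BGR_II}); the choice of weights $a,b$ and the resulting exponents check out line by line, and the constant you obtain, $c_\infty=(p/\omega_{N-1})^{1/p}$, depends only on $N$ and $p$ as required. One caveat: your fallback remark that ``$C^{\infty}_{\mathrm{c},\mathrm{rad}}(\mathbb{R}^{N})$ is dense in $W_{r}$'' is precisely what the paper singles out as \emph{not so obvious} under its mere measurability assumption on $V$ (recall that $(\mathbf{V})$ only gives $V\in L^{1}((r_{1},r_{2}))$ on one interval, so $V(|x|)\,dx$ need not be locally finite and mollification in the $L^{p}_{V}$-norm is delicate). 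Fortunately you do not need that fallback: your primary route---local absolute continuity of the radial profile $\tilde u$ on $(0,+\infty)$, which is standard for $u\in D^{1,p}_{\mathrm{rad}}(\mathbb{R}^{N})$, together with the decay $\tilde u(R)\to 0$ supplied by \eqref{PointwiseEstimate}---is enough to justify the limit $R\to\infty$ in the fundamental theorem of calculus directly, so the proof stands without invoking density.
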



\proof%
The lemma is essentially proved in \cite[Lemma 4]{Su-Wang-Will-p}, but without making explicit the dependence of the constant on $\lambda_{\infty}$ and with slightly different assumptions on $V$: a global decay condition and the continuity on $(0,+\infty)$, which allows a density argument that is not so obvious in our hypotheses. In the form given here, instead, the lemma follows by adapting the proof of \cite[Proposition 28]{BGR_II}, where the result is proved for $p=2$. 
%
%
%
\endproof%


\proof[Proof of Theorem \ref{THM2}]
Assume the hypotheses of the theorem and denote 
\[
\Lambda _{\infty }:=\esssup_{r>R_{2}}\frac{K\left( r\right) }{%
r^{\alpha _{\infty }}V\left( r\right) ^{\beta _{\infty }}}\quad \text{and}%
\quad \lambda _{\infty }:=\essinf_{r>R_{2}}r^{\gamma _{\infty
}}V\left( r\right) . 
\]
Let $u\in W_r $ and $h\in W$ be such that $\left\|
u\right\| =\left\| h\right\| =1$. Let $R\geq R_{2}$ and observe that $%
\forall \xi \geq 0$ one has 
\begin{equation}
\esssup_{r>R}\frac{K\left( r\right) }{r^{\alpha _{\infty }+\xi
\gamma _{\infty }}V\left( r\right) ^{\beta _{\infty }+\xi }}\leq 
\esssup_{r>R_{2}}\frac{K\left( r\right) }{r^{\alpha _{\infty }}V\left(
r\right) ^{\beta _{\infty }}\left( r^{\gamma _{\infty }}V\left( r\right)
\right) ^{\xi }}\leq \frac{\Lambda _{\infty }}{\lambda _{\infty }^{\xi }}%
<+\infty .  \label{stimaETA}
\end{equation}
We will denote by $C$ any positive constant which does not depend on $u$, $h$
or $R$ (such as $\Lambda _{\infty }/\lambda _{\infty }^{\xi }$ if $\xi $
does not depend on $u$, $h$ or $R$).

Denoting $\alpha _{1}=\alpha _{1}\left( \beta _{\infty },\gamma _{\infty
}\right) $, $\alpha _{2}=\alpha _{2}\left( \beta _{\infty }\right) $ and $%
\alpha _{3}=\alpha _{3}\left( \beta _{\infty },\gamma _{\infty }\right) $,
as defined in (\ref{alpha_i :=}), we will distinguish several cases,
according to the description (\ref{descrizioneThm2}). In each of such cases,
we will choose a suitable $\xi \geq 0$ and, thanks to (\ref{stimaETA}) and (%
\ref{PointwiseInfty}), we will apply Lemma \ref{Lem(Omega)} with $\Omega
=B_{R}^{c}$, $\alpha =\alpha _{\infty }+\xi \gamma _{\infty }$, $\beta
=\beta _{\infty }+\xi $ (whence $\Lambda $ will be given by the left hand
side of (\ref{stimaETA})), $m=c_{\infty }\lambda _{\infty }^{-\frac{p-1}{p^2}%
}\left\| u\right\| =c_{\infty }\lambda _{\infty }^{-\frac{p-1}{p^2}}$ and 
$\nu =\frac{p(N-1)-\gamma _{\infty }(p-1)}{p^2}$. 
Recall that we are assuming $\gamma _{\infty }\leq p<N$, so that $\nu>0$. 
We will obtain that 
\[
\int_{B_{R}^{c}}K\left( \left| x\right| \right) \left| u\right|
^{q_{2}-1}\left| h\right| dx\leq CR^{\delta }
\]
for some $\delta <0$, not dependent on $R$, so that the result
follows.\medskip

\noindent \emph{Case }$\alpha _{\infty }\geq \alpha _{1}$.\smallskip

\noindent We take $\xi =1-\beta _{\infty }$ and apply Lemma \ref{Lem(Omega)}
with $\beta =\beta _{\infty }+\xi =1$ and $\alpha =\alpha _{\infty }+\xi
\gamma _{\infty }=\alpha _{\infty }+\left( 1-\beta _{\infty }\right) \gamma
_{\infty }$. We get 
\begin{eqnarray*}
\int_{B_{R}^{c}}K\left( \left| x\right| \right) \left| u\right|
^{q_{2}-1}\left| h\right| dx &\leq &C\left( \int_{B_{R}^{c}}\left| x\right|
^{\frac{p}{p-1}\left( \alpha -\nu \left( q_{2}-p\right) \right) }V\left( \left| x\right| \right)
\left| u\right| ^{p}dx\right) ^{\frac{p-1}{p}} \\
&\leq &C\left( R^{\frac{p}{p-1} \left( \alpha -\nu \left( q_{2}- p \right) \right) 
}\int_{B_{R}^{c}}V\left( \left| x\right| \right) \left| u\right|
^{p}dx\right) ^{\frac{p-1}{p}}\leq CR^{\alpha -\nu \left( q_{2}-p\right) },
\end{eqnarray*}
since 
\begin{eqnarray*}
\alpha -\nu \left( q_{2}-p\right) &=& \alpha _{\infty }+\left( 1-\beta _{\infty
}\right) \gamma _{\infty }-\frac{p(N-1)-\gamma _{\infty }(p-1)}{p^2}\left(
q_{2}-p \right) \\
&=&\frac{p(N-1)-\gamma _{\infty }(p-1)}{p^2}\left( q_{**}-q_{2}\right)<0. 
\end{eqnarray*}

\noindent \emph{Case }$\max \left\{ \alpha _{2},\alpha _{3}\right\} <\alpha
_{\infty }<\alpha _{1}$.\smallskip

\noindent Take $\xi =\frac{\alpha _{\infty }+\left( 1-\beta _{\infty
}\right) N}{N-\gamma _{\infty }}>0$ and apply Lemma \ref{Lem(Omega)} with $%
\beta =\beta _{\infty }+\xi $ and $\alpha =\alpha _{\infty }+\xi \gamma
_{\infty }$. For doing this, observe that $\alpha _{3}<\alpha _{\infty
}<\alpha _{1}$ implies 
\[
\beta =\beta _{\infty }+\xi =\frac{\alpha _{\infty }-\gamma _{\infty }\beta
_{\infty }+N}{N-\gamma _{\infty }}\in \left( \frac{1}{p},1\right) . 
\]
We get 
\[
\int_{B_{R}^{c}}K\left( \left| x\right| \right) \left| u\right|
^{q_{2}-1}\left| h\right| dx\leq C\left( \int_{B_{R}^{c}}\left| x\right| ^{%
\frac{\alpha -\nu \left( q_{2}-p\beta \right) }{1-\beta }}dx\right)
^{1-\beta }\leq C\left( R^{\frac{\alpha -\nu \left( q_{2}-p\beta \right) }{%
1-\beta }+N}\right) ^{1-\beta }, 
\]
since 
\[
\frac{\alpha -\nu \left( q_{2}-p\beta \right) }{1-\beta }+N=\frac{\nu }{%
1-\beta }\left( p\frac{\alpha _{\infty }-\beta _{\infty }\gamma _{\infty }+N%
}{N-\gamma _{\infty }}-q_{2}\right) =\frac{\nu }{1-\beta }\left(
q_{*}-q_{2}\right) <0. 
\]

\noindent \emph{Case }$\beta _{\infty }=1$\emph{\ and }$\alpha _{\infty
}\leq 0=\alpha _{2}\,\left( =\max \left\{ \alpha _{2},\alpha _{3}\right\}
\right) $.\smallskip

\noindent Take $\xi =0$ and apply Lemma \ref{Lem(Omega)} with $\beta =\beta
_{\infty }+\xi =1$ and $\alpha =\alpha _{\infty }+\xi \gamma _{\infty
}=\alpha _{\infty }$. We get 
\[
\int_{B_{R}^{c}}K\left( \left| x\right| \right) \left| u\right|
^{q_{2}-1}\left| h\right| dx\leq C\left( \int_{B_{R}^{c}}\left| x\right|
^{\frac{p}{p-1}\left( \alpha _{\infty }-\nu \left( q_{2}-p\right) \right) }V\left( \left| x\right|
\right) \left| u\right| ^{p}dx\right) ^{\frac{p-1}{p}}\leq CR^{\alpha _{\infty
}-\nu \left( q_{2}-p\right) }, 
\]
since $\alpha _{\infty }-\nu \left( q_{2}-p\right) \leq -\nu \left(
q_{2}-p\right) <0.$
\medskip

\noindent \emph{Case }$\frac{1}{p}<\beta _{\infty }<1$\emph{\ and }$\alpha
_{\infty }\leq \alpha _{2}\,\left( =\max \left\{ \alpha _{2},\alpha
_{3}\right\} \right) $.\smallskip

\noindent Take $\xi =0$ again and apply Lemma \ref{Lem(Omega)} with $\beta
=\beta _{\infty }\in \left( \frac{1}{p},1\right) $ and $\alpha =\alpha
_{\infty }$. We get 
\begin{eqnarray*}
\int_{B_{R}^{c}}K\left( \left| x\right| \right) \left| u\right|^{q_{2}-1}\left| h\right| dx 
&\leq & C\left( \int_{B_{R}^{c}}\left| x\right| ^{\frac{\alpha _{\infty }-\nu \left( q_{2}-p\beta _{\infty }\right) }{1-\beta
_{\infty }}}dx\right) ^{1-\beta _{\infty }}\\
&\leq & C\left( R^{\frac{\alpha_{\infty }-\nu \left( q_{2}-p\beta _{\infty }\right) }{1-\beta _{\infty }}%
+N}\right) ^{1-\beta _{\infty }}, 
\end{eqnarray*}
since 
\[
\frac{\alpha _{\infty }-\nu \left( q_{2}-p\beta _{\infty }\right) }{1-\beta
_{\infty }}+N=\frac{\alpha _{\infty }-\alpha _{2}-\nu \left( q_{2}-p\beta
_{\infty }\right) }{1-\beta _{\infty }}<0 
\]

\noindent \emph{Case }$\beta _{\infty }\leq \frac{1}{p}$\emph{\ and }$\alpha
_{\infty }\leq \alpha _{3}\,\left( =\max \left\{ \alpha _{2},\alpha
_{3}\right\} \right) $.\smallskip

\noindent Take $\xi =\frac{1-p\beta _{\infty }}{p}\geq 0$, we can apply
Lemma \ref{Lem(Omega)} with $\beta =\beta _{\infty }+\xi =\frac{1}{p}$ and $%
\alpha =\alpha _{\infty }+\xi \gamma _{\infty }$. We get 
\[
\int_{B_{R}^{c}}K\left( \left| x\right| \right) \left| u\right|
^{q_{2}-1}\left| h\right| dx
\leq 
C\left( \int_{B_{R}^{c}}
\left| x\right|^{\frac{ \left( \alpha -\nu (q_2-1)\right)p}{p-1} }    dx\right)
^{\frac{p-1}{p}}
\leq
CR^{\alpha -\nu \left( q_{2}-1\right) +\frac{N(p-1)}{p}}, 
\]
since 
\[
\alpha -\nu \left( q_{2}-1\right) +\frac{N(p-1)}{p}=\alpha _{\infty }+\frac{%
1-p\beta _{\infty }}{p}\gamma _{\infty }+\frac{N(p-1)}{p}-\nu \left(
q_{2}-1\right) =\alpha _{\infty }-\alpha _{3}-\nu \left( q_{2}-1\right) <0. 
\]
\endproof%


The proof of Theorem \ref{THM3} will be achieved by the following lemmas.

\begin{lem}
\label{LEM(pointwise0)}Assume that there exists $R>0$ such that 
$V(r)<+\infty$ almost everywhere in $(0,R)$ and
\[
\lambda \left( R\right) :=\essinf_{r\in \left( 0,R\right)
}r^{\gamma _{0}}V\left( r\right) >0\quad \text{for some }\gamma _{0}\geq p.
\]
Then there exists a constant $c_{0}>0$, only dependent on $N$ and $p$, 
such that $\forall u\in W_r$ 
one has 
\begin{equation}
\left| u\left( x\right) \right| \leq c_{0}\left( \left( \frac{1}{\lambda
\left( R\right)  }\right)^{\frac{p-1}{p}}+\frac{R^{\frac{\gamma _{0}-p}{p}}}{\lambda \left(
R\right) }\right) ^{\frac{1}{p}}\left\| u\right\| \left| x\right| ^{-\frac{%
p(N-1)-(p-1)\gamma _{0}}{p^2}}\quad \text{almost everywhere in }B_{R}.
\label{Pointwise0}
\end{equation}
\end{lem}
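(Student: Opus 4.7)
The plan is to adapt the strategy used for the analogous bound at infinity \eqref{PointwiseInfty}, with the crucial complication that one cannot in general integrate the derivative of $|u|^p$ all the way to the origin, so a boundary contribution at $s=R$ must be controlled. By the absolute continuity of radial Sobolev functions on rays, I treat $u$ as a function of $s=|x|$ with a weak derivative $u'(s)$; a density argument with smooth truncations handles the remaining technicalities. Throughout, set $\theta := [p(N-1)-(p-1)\gamma_0]/p$, $A:=\lambda(R)^{-(p-1)/p}$ and $B:=R^{(\gamma_0-p)/p}/\lambda(R)$, so that the target estimate is $|u(r)|^p \leq C(A+B)\|u\|^p r^{-\theta}$.

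The core one-dimensional identity is $|u(r)|^p - |u(r')|^p = p\int_{r'}^r |u|^{p-2} u\,u'\,ds$, applied with different choices of $r'$ according to the sign of $\theta$. In either case the main step is H\"{o}lder's inequality applied to an integral of the form $\int_I |u|^{p-1}|u'|\,ds$ with weight $\phi(s)=V(s)s^{N-1}$: the $|u|^p$-factor is controlled by $\|u\|^p/\omega_{N-1}$, while the lower bound $V(s)\geq \lambda(R)s^{-\gamma_0}$ on $(0,R)$ transforms the dual factor into an integral of $|u'(s)|^p s^{N-1}\cdot s^{-p\theta}$, up to the constant $\lambda(R)^{-(p-1)/p}$.

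If $\theta\geq 0$ (equivalently $\gamma_0\leq p(N-1)/(p-1)$), I integrate from $r$ to $R$. The weight $s^{-p\theta}$ is non-increasing on $(r,R)$, hence bounded there by $r^{-p\theta}$, giving the integral estimate $CA\|u\|^p r^{-\theta}$, which is the $A$-term. For the boundary $|u(R)|^p$ I use an averaging argument: the bound $\int_{R/2}^R |u|^p s^{N-1}\,ds \leq R^{\gamma_0}\lambda(R)^{-1}\|u\|^p/\omega_{N-1}$ (again using $V\geq \lambda s^{-\gamma_0}$) together with the mean value theorem produces some $R'\in(R/2,R)$ with $|u(R')|^p\leq CR^{\gamma_0-N}\lambda(R)^{-1}\|u\|^p$; applying the fundamental identity on $(R',R)$ and the same H\"{o}lder bound yields
\[
|u(R)|^p \leq C\|u\|^p \bigl( R^{\gamma_0-N}\lambda(R)^{-1} + R^{-\theta}\lambda(R)^{-(p-1)/p} \bigr).
\]
The elementary identity $(\gamma_0-p)/p-\theta=\gamma_0-N$ rewrites this as $C(A+B)\|u\|^p R^{-\theta}$, and $R\geq r$ with $\theta\geq 0$ upgrades $R^{-\theta}$ to $r^{-\theta}$, completing this case.

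If $\theta<0$ (equivalently $\gamma_0>p(N-1)/(p-1)$, which forces $\gamma_0>N$ since $p<N$), the function $s\mapsto V(s)s^{N-1}$ is not integrable near $0$, so the finiteness of $\int V|u|^p\,dx$ forces the essential limit $u(0^+)=0$. I then integrate from $0$ to $r$; the weight $s^{-p\theta}$ is non-decreasing on $(0,r)$ and bounded by $r^{-p\theta}$, so the same H\"{o}lder step gives $|u(r)|^p\leq CA\|u\|^p r^{-\theta}$ with no boundary term (the $B$-term is then a redundant upper bound). The main difficulty I anticipate is the averaging step in the case $\theta\geq 0$: the standard Strauss bound $|u(R)|\leq C_{N,p}\|u\|R^{-(N-p)/p}$ is insufficient because it carries no factor $\lambda(R)^{-1}$, and extracting this factor — essential for matching the $B$-term — requires the mean value argument over $(R/2,R)$ combined with the weight lower bound.
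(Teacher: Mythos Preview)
Your proposal is correct and is precisely the standard Strauss--type argument that the paper has in mind: the paper does not write out a proof at all but simply records that the estimate follows by adapting \cite[Proposition 29]{BGR_II} (the case $p=2$) and \cite[Lemma 5]{Su-Wang-Will-p}, and your sketch is exactly that adaptation---fundamental theorem for $|u|^p$ along the radius, H\"{o}lder with the weight $V(s)s^{N-1}$, the lower bound $V\geq \lambda(R)s^{-\gamma_0}$ to extract the factors $A$ and $B$, and the mean-value step on $(R/2,R)$ to capture the boundary contribution. Your careful split into the cases $\theta\geq 0$ and $\theta<0$ and the observation that $(\gamma_0-p)/p-\theta=\gamma_0-N$ are exactly the computations one needs; nothing is missing.
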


\proof%
The estimate (\ref{Pointwise0}) is essentially proved in \cite[Lemma 5]{Su-Wang-Will-p}, but without expliciting the dependence of the constants on $R$ and with slightly different assumptions on $V$ (a global decay condition and the continuity on $(0,+\infty)$) 
and $u$ (which is taken in $W_r \cap D_{0}^{1,p}\left(B_{R}\right)$). In the form given here, the lemma follows by adapting the proof of \cite[Proposition 29]{BGR_II}, where the result is proved for $p=2$. 
%
\endproof%

\begin{lem}
\label{Lem(Omega0)}Assume that there exists $R>0$ such that 
$V(r)<+\infty$ almost everywhere in $(0,R)$ and
\begin{equation}
\Lambda _{\alpha ,\beta }\left( R\right) :=\esssup_{r\in \left(
0,R\right) }\frac{K\left( r\right) }{r^{\alpha }V\left( r\right) ^{\beta }}%
<+\infty \quad \text{for some }\frac{1}{p}\leq \beta \leq 1\text{~and }%
\alpha \in \mathbb{R}  \label{Lem(Omega0): hp1}
\end{equation}
and 
\[
\lambda \left( R\right) :=\essinf_{r\in \left( 0,R\right)
}r^{\gamma _{0}}V\left( r\right) >0\quad \text{for some }\gamma _{0}>p.
\]
Assume also that there exists $q>p\beta $ such that 
\[
\left( p(N-1)-(p-1)\gamma _{0}\right) q < p^2 (\alpha +N)-p\left((p-1) \gamma _{0}+p\right)
\beta .
\]
Then $\forall u\in W_r$ 
and $\forall h\in W$ one has 
\[
\int_{B_{R}}K\left( \left| x\right| \right) \left| u\right| ^{q-1}\left|
h\right| dx\leq c_{0}^{q-p\beta }a\left( R\right) R^{\frac{p^2 (\alpha
+N)-p\left( (p-1) \gamma _{0}+p\right) \beta -\left( p(N-1)-(p-1)\gamma _{0}\right) q}{p^2}%
}\left\| u\right\| ^{q-1}\left\| h\right\| ,
\]
where $c_{0}$ is the constant of Lemma \ref{LEM(pointwise0)} and 
$a\left(R\right) :=\Lambda _{\alpha ,\beta }\left( R\right) \left(  \left( \frac{1}{%
\lambda \left( R\right) }\right)^{\frac{p-1}{p}}+\frac{R^{\frac{\gamma _{0}-p}{p}}}{\lambda \left(
R\right) }\right) ^{\frac{q-p\beta }{p}}$.
\end{lem}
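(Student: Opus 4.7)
The plan is to combine the pointwise decay estimate of Lemma \ref{LEM(pointwise0)} with the integral estimates of Lemma \ref{Lem(Omega)} applied on $\Omega = B_R$. First I would invoke Lemma \ref{LEM(pointwise0)} to obtain the pointwise bound
\[
|u(x)| \leq \frac{m}{|x|^{\nu}} \quad \text{a.e.\ in } B_R, \qquad m := c_0 \left(\Bigl(\tfrac{1}{\lambda(R)}\Bigr)^{\frac{p-1}{p}} + \tfrac{R^{(\gamma_0-p)/p}}{\lambda(R)}\right)^{1/p} \|u\|, \quad \nu := \tfrac{p(N-1)-(p-1)\gamma_0}{p^2},
\]
which is valid because $\|u\|=1$ is not assumed in the present lemma (one applies Lemma \ref{LEM(pointwise0)} in its stated homogeneous form). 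Then Lemma \ref{Lem(Omega)} applies with $\Omega = B_R$, the triple $(\alpha,\beta,\nu)$ above and $\Lambda = \Lambda_{\alpha,\beta}(R)$, producing one of three estimates according to whether $\beta = 1/p$, $1/p < \beta < 1$, or $\beta = 1$.

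Next I would carry out the three integral computations separately. In each case the inner integral over $B_R$ reduces by polar coordinates to one of the form $\int_0^R r^{E}\,dr$ (or an analogue involving $V(|x|)|u|^p$ when $\beta=1$); my goal is to check that the exponent $E$ satisfies $E > -1$ (so the integral is finite and equals a constant times $R^{E+1}$), and then simplify. A direct calculation shows that in every case the assumption
\[
\bigl(p(N-1)-(p-1)\gamma_0\bigr)q < p^2(\alpha+N) - p\bigl((p-1)\gamma_0 + p\bigr)\beta
\]
is precisely equivalent to the positivity condition on $E+1$ (for $\beta\in[1/p,1)$) and to the positivity of the relevant $|x|$-exponent $\tfrac{p}{p-1}(\alpha-\nu(q-p))$ in the $\beta=1$ case, so that one can bound $\int_{B_R}|x|^{\frac{p(\alpha-\nu(q-p))}{p-1}}V(|x|)|u|^p\,dx \leq R^{\frac{p(\alpha-\nu(q-p))}{p-1}}\|u\|^p$.

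The final step is purely bookkeeping. After taking the outer $(1-\beta)$-th or $\frac{p-1}{p}$-th power, one collects the factors $\Lambda_{\alpha,\beta}(R)$, $m^{q-p\beta}$, $\|u\|^{p\beta-1}$ or $\|u\|^{p-1}$ coming from Lemma \ref{Lem(Omega)}. Writing $m^{q-p\beta} = c_0^{q-p\beta}\bigl((\lambda(R))^{-\frac{p-1}{p}}+R^{(\gamma_0-p)/p}/\lambda(R)\bigr)^{(q-p\beta)/p}\|u\|^{q-p\beta}$ groups the $\lambda(R)$ and $R$-independent prefactors into $c_0^{q-p\beta}a(R)$, while the remaining powers of $\|u\|$ combine to $\|u\|^{q-1}$. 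The three resulting exponents of $R$ (one per case of Lemma \ref{Lem(Omega)}) should all coincide with
\[
\frac{p^2(\alpha+N) - p\bigl((p-1)\gamma_0 + p\bigr)\beta - \bigl(p(N-1)-(p-1)\gamma_0\bigr)q}{p^2},
\]
which is the target exponent in the statement.

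The main obstacle is not conceptual but algebraic: verifying that the three apparently different exponents of $R$ produced by the three cases of Lemma \ref{Lem(Omega)} collapse to the single expression above, and that the stated integrability condition is precisely the one needed to apply the estimate in each case. Once these identities are checked (a tedious but mechanical computation, entirely analogous to those already performed in the proofs of Theorems \ref{THM0}--\ref{THM2}), the conclusion follows immediately.
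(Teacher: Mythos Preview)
Your proposal is correct and follows essentially the same route as the paper: invoke Lemma \ref{LEM(pointwise0)} to produce the pointwise bound with $m$ and $\nu$ exactly as you wrote, then feed this into Lemma \ref{Lem(Omega)} on $\Omega=B_R$, evaluate the resulting radial integrals, and verify that the integrability/positivity condition on the exponent is precisely the hypothesis $\left( p(N-1)-(p-1)\gamma _{0}\right) q < p^2 (\alpha +N)-p\left((p-1) \gamma _{0}+p\right)\beta$. The only cosmetic difference is that the paper handles $\frac{1}{p}\leq\beta<1$ as a single case (using the second formula of Lemma \ref{Lem(Omega)}, which coincides with the first at $\beta=\tfrac{1}{p}$), whereas you split off $\beta=\tfrac{1}{p}$ separately; either way the algebra and the final exponent of $R$ come out the same.
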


\proof%
Let $u\in W_r$ 
and $h\in W$. By assumption (\ref{Lem(Omega0): hp1}) and Lemma \ref
{LEM(pointwise0)}, we can apply Lemma \ref{Lem(Omega)} with $\Omega =B_{R}$, 
$\Lambda =\Lambda _{\alpha ,\beta }\left( R\right) $, $\nu =\frac{%
(N-1)p-(p-1)\gamma _{0}}{p^2}$ and 
\[
m=c_{0}\left( \left( \frac{1}{ \lambda \left( R\right) }\right)^{\frac{p-1}{p}}+\frac{R^{\frac{%
\gamma _{0}-p}{p}}}{\lambda \left( R\right) }\right) ^{\frac{1}{p}}\left\|
u\right\| . 
\]
If $\frac{1}{p}\leq \beta <1$, we get 
\begin{eqnarray*}
\int_{B_{R}}K\left( \left| x\right| \right) \left| u\right| ^{q-1}\left|
h\right| dx &\leq &\Lambda m^{q-p\beta }\left( \int_{\Omega }\left| x\right|
^{\frac{\alpha -\nu \left( q-p\beta \right) }{1-\beta }}dx\right) ^{1-\beta
}\left\| u\right\| ^{p\beta -1}\left\| h\right\| \\
&=&c_{0}^{q-p\beta }a\left( R\right) \left( \int_{B_{R}}\left| x\right| ^{%
\frac{p^2 \alpha -\left( p(N-1)-(p-1)\gamma _{0}\right) \left( q-p\beta \right) }{%
p^2 \left( 1-\beta \right) }}dx\right) ^{1-\beta }\left\| u\right\|
^{q-1}\left\| h\right\| \\
&\leq &c_{0}^{q-p\beta }a\left( R\right) \left( R^{\frac{p^2 \alpha -\left(
p(N-1)-(p-1)\gamma _{0}\right) \left( q-p\beta \right) }{p^2 \left( 1-\beta \right) }%
+N}\right) ^{1-\beta }\left\| u\right\| ^{q-1}\left\| h\right\| ,
\end{eqnarray*}
since 
\begin{eqnarray*}
&&\frac{p^2 \alpha -\left( p(N-1)-(p-1)\gamma _{0}\right) \left( q-p\beta \right) }{p^2 \left( 1-\beta \right) }+N\\
&=& \frac{p^2 (\alpha +N)-p\left((p-1) \gamma _{0}+p\right)
\beta -\left( p(N-1)-(p-1)\gamma _{0}\right) q}{p^2 \left( 1-\beta \right) }>0. 
\end{eqnarray*}
If instead we have $\beta =1$, we get 
\begin{eqnarray*}
&&\int_{B_{R}}K\left( \left| x\right| \right) \left| u\right| ^{q-1}\left|
h\right| dx \\
&\leq &\Lambda m^{q-p}\left( \int_{\Omega }\left| x\right|
^{\frac{p}{p-1}\left( \alpha -\nu \left( q-p\right)\right) }V\left( \left| x\right| \right) \left|
u\right| ^{p}dx\right) ^{\frac{p-1}{p}}\left\| h\right\| \\
&=&c_{0}^{q-p}a\left( R\right) \left( \int_{B_{R}}\left| x\right| ^{\frac{%
p^2 \alpha -\left( p(N-1)- (p-1)\gamma _{0}\right) \left( q-p\right) }{p(p-1)}}V\left(
\left| x\right| \right) \left| u\right| ^{p}dx\right) ^{\frac{p-1}{p}}\left\|
u\right\| ^{q-p}\left\| h\right\| \\
&\leq &c_{0}^{q-p}a\left( R\right) \left( R^{\frac{p^2 \alpha -\left(
p(N-1)-(p-1)\gamma _{0}\right) \left( q-p\right) }{p(p-1)}}\int_{B_{R}}V\left( \left|
x\right| \right) \left| u\right| ^{p}dx\right) ^{\frac{p-1}{p}}\left\|
u\right\| ^{q-p}\left\| h\right\| \\
&\leq &c_{0}^{q-p}a\left( R\right) R^{\frac{p^2 \alpha -\left( p(N-1)-(p-1)\gamma
_{0}\right) \left( q-p\right) }{p^2}}\left\| u\right\| ^{q-1}\left\| h\right\|,
\end{eqnarray*}
since 
\begin{eqnarray*}
&&p^2 \alpha -\left( p(N-1)-(p-1)\gamma _{0}\right) \left( q-p\right) \\
&=&p^2 (\alpha+N)-p\left((p-1) \gamma _{0}+p\right) -\left(p(N-1)-(p-1)\gamma _{0}\right) q>0. 
\end{eqnarray*}
\endproof%

\proof[Proof of Theorem \ref{THM3}]
Assume the hypotheses of the theorem and denote 
\[
\Lambda _{0}:=\esssup_{r\in \left( 0,R_{1}\right) }\frac{K\left(
r\right) }{r^{\alpha _{0}}V\left( r\right) ^{\beta _{0}}}\quad \text{and}%
\quad \lambda _{0}:=\essinf_{r\in \left( 0,R_{1}\right) }r^{\gamma
_{0}}V\left( r\right).
\]
If $\gamma _{0}=p$ the thesis of the theorem is true by Theorem \ref{THM0}
(see Remark \ref{RMK: Hardy 2}.\ref{RMK: Hardy 2-improve}), whence we can
assume $\gamma _{0}>p$ without restriction. 
We claim that for every $0<R\leq R_{1}$ there exists $b\left(
R\right) >0$ such that $b\left( R\right) \rightarrow 0$ as $R\rightarrow
0^{+}$ and 
\[
\int_{B_{R}}K\left( \left| x\right| \right) \left| u\right| ^{q_{1}-1}\left|
h\right| dx\leq b\left( R\right) \left\| u\right\| ^{q_{1}-1}\left\|
h\right\|,
\quad 
\forall u\in W_r, 
\ \forall h\in W,
\]
which clearly gives the result. In order to prove this claim, let $0<R\leq R_{1}$.
Then one has
\begin{equation}
\lambda \left( R\right) :=\essinf_{r\in \left( 0,R\right)
}r^{\gamma _{0}}V\left( r\right) \geq \lambda _{0}>0  \label{stimaETA1}
\end{equation}
and for every $\xi \geq 0$ we have 
\begin{eqnarray}
\Lambda _{\alpha _{0}+\xi \gamma _{0},\beta _{0}+\xi }\left( R\right) &:=&
\esssup_{r\in \left( 0,R\right) }\frac{K\left( r\right) }{r^{\alpha
_{0}+\xi \gamma _{0}}V\left( r\right) ^{\beta _{0}+\xi }}\leq 
\esssup_{r\in \left( 0,R_{1}\right) }\frac{K\left( r\right) }{r^{\alpha
_{0}}V\left( r\right) ^{\beta _{0}}\left( r^{\gamma _{0}}V\left( r\right)
\right) ^{\xi }}\nonumber \\
&\leq &\frac{\Lambda _{0}}{\lambda _{0}^{\xi }}<+\infty .
\label{stimaETA0}
\end{eqnarray}
Denoting $\alpha _{1}=\alpha _{1}\left( \beta _{0},\gamma _{0}\right) $, $%
\alpha _{2}=\alpha _{2}\left( \beta _{0}\right) $ and $\alpha _{3}=\alpha
_{3}\left( \beta _{0},\gamma _{0}\right) $, as defined in (\ref{alpha_i :=}%
), we will now distinguish five cases, which reflect the five definitions (%
\ref{A:=}) of the set $\mathcal{A}_{\beta _{0},\gamma _{0}}$. For the sake
of clarity, some computations will be displaced in the Appendix.\medskip

\noindent \emph{Case }$p<\gamma _{0}<N$\emph{.}\smallskip

\noindent In this case, $\left( \alpha _{0},q_{1}\right) \in \mathcal{A}_{\beta _{0},\gamma _{0}}$ means 
\[
\begin{tabular}{l}
$\alpha _{0}>\max \left\{ \alpha _{2},\alpha _{3}\right\}$\quad and \smallskip
\\
$\max \left\{ 1,p\beta _{0}\right\} <q_{1}<\displaystyle\min \left\{ p\frac{\alpha
_{0}-\beta _{0}\gamma _{0}+N}{N-\gamma _{0}},p\frac{p\alpha _{0}+\left(
1-p\beta _{0}\right) \gamma _{0}+p(N-1)}{p(N-1)-(p-1)\gamma _{0}}\right\}$\nonumber
\end{tabular}
\]
and these conditions ensure that we can fix $\xi \geq 0$, independent of $R$, 
in such a way that $\alpha =\alpha _{0}+\xi \gamma _{0}$
and $\beta =\beta _{0}+\xi $ satisfy 
\begin{equation}
\frac{1}{p}\leq \beta \leq 1\quad \text{and}\quad p\beta <q_{1}<\frac{%
p^2 (\alpha +N)-p\left((p-1) \gamma _{0}+p\right) \beta }{p(N-1)-(p-1)\gamma _{0}}
\label{Lem(zero): cond1}
\end{equation}
(see Appendix). Hence, by (\ref{stimaETA0}) and (\ref{stimaETA1}), we can
apply Lemma \ref{Lem(Omega0)} (with $q=q_{1}$), so that $\forall u\in W_r$ 
and $\forall h\in W $ we get 
\[
\int_{B_{R}}K\left( \left| x\right| \right) \left| u\right| ^{q_{1}-1}\left|
h\right| dx\leq c_{0}^{q_{1}-p\beta }a\left( R\right) R^{\frac{p^2 (\alpha
+N)-p\left( (p-1) \gamma _{0}+p\right) \beta -\left( p(N-1)-(p-1)\gamma _{0}\right) q_{1}%
}{p^2}}\left\| u\right\| ^{q_{1}-1}\left\| h\right\| . 
\]
This gives the result, since $R^{p^2(\alpha +N)-p\left( (p-1) \gamma _{0}+p\right)
\beta -\left( p(N-1)- (p-1)\gamma _{0}\right) q_{1}}\rightarrow 0$ as $R\rightarrow
0^{+}$ and 
\[
a\left( R\right) =\Lambda _{\alpha _{0}+\xi \gamma _{0},\beta _{0}+\xi
}\left( R\right) \left( \left( \frac{1}{\lambda \left( R\right) }\right)^{\frac{p-1}{p}}+\frac{R^{%
\frac{\gamma _{0}-p}{p}}}{\lambda \left( R\right) }\right) ^{\frac{%
q_{1}-p\beta }{p}}\leq \frac{\Lambda _{0}}{\lambda _{0}^{\xi }}\left( \left( \frac{1%
}{\lambda _{0}}\right)^{\frac{p-1}{p}}+\frac{R_{1}^{\frac{\gamma _{0}-p}{p}}}{\lambda _{0}}%
\right) ^{\frac{q_{1}-p\beta }{p}}. 
\]

\noindent \emph{Case }$\gamma _{0}=N$. \smallskip

\noindent In this case, $\left( \alpha _{0},q_{1}\right) \in \mathcal{A}%
_{\beta _{0},\gamma _{0}}$ means 
\[
\alpha _{0}>\alpha _{1}\,\left( =\alpha _{2}=\alpha _{3}\right) \quad \text{%
and}\quad \max \left\{ 1,p\beta _{0}\right\} <q_{1}<p\frac{p\alpha
_{0}+\left( 1-p\beta _{0}\right) \gamma _{0}+p(N-1)}{p(N-1)-(p-1)\gamma _{0}} 
\]
and these conditions still ensure that we can fix $\xi \geq 0$ in such a way
that $\alpha =\alpha _{0}+\xi \gamma _{0}$ and $\beta =\beta _{0}+\xi $
satisfy (\ref{Lem(zero): cond1}) (see Appendix), so that the result ensues
again by Lemma \ref{Lem(Omega0)}.\medskip

\noindent \emph{Case }$N<\gamma _{0}<\frac{p}{p-1}(N-1)$.\smallskip

\noindent In this case, $\left( \alpha _{0},q_{1}\right) \in \mathcal{A}%
_{\beta _{0},\gamma _{0}}$ means 
\[
\begin{tabular}{l}
$\alpha _{0}>\alpha _{1}$\quad and\smallskip\\
$\displaystyle\max \left\{ 1,p\beta _{0},p\frac{\alpha _{0}-\beta _{0}\gamma _{0}+N}{N-\gamma _{0}}\right\} <q_{1}
<\displaystyle p\frac{p\alpha _{0}+\left( 1-p\beta _{0}\right) \gamma _{0}+p(N-1)}{p(N-1)-(p-1)\gamma_{0}}$
\end{tabular}
\]
and the conclusion then follows as in the former cases (see
Appendix).\medskip

\noindent \emph{Case }$\gamma _{0}=\frac{p}{p-1}(N-1)$.\smallskip

\noindent In this case, $\left( \alpha _{0},q_{1}\right) \in \mathcal{A}%
_{\beta _{0},\gamma _{0}}$ means 
\[
\alpha _{0}>\alpha _{1}\quad \text{and}\quad \max \left\{ 1,p\beta _{0},p%
\frac{\alpha _{0}-\beta _{0}\gamma _{0}+N}{N-\gamma _{0}}\right\} <q_{1} 
\]
and these conditions ensure that we can fix $\xi \geq 0$ in such a way that $%
\alpha =\alpha _{0}+\xi \gamma _{0}$ and $\beta =\beta _{0}+\xi $ satisfy 
\[
\frac{1}{p}\leq \beta \leq 1,\quad q_{1}>p\beta \quad \text{and}\quad
0<p^2 \left( \alpha +N \right) - p\left( (p-1) \gamma _{0}+p\right) \beta 
\]
(see Appendix). The result then follows again from Lemma \ref{Lem(Omega0)}.\medskip

\noindent \emph{Case }$\gamma _{0}>\frac{p}{p-1} (N-1)$.\smallskip

\noindent In this case, $\left( \alpha _{0},q_{1}\right) \in \mathcal{A}%
_{\beta _{0},\gamma _{0}}$ means 
\[
q_{1}>\max \left\{ 1,p\beta _{0},p\frac{\alpha _{0}-\beta _{0}\gamma _{0}+N}{%
N-\gamma _{0}},p\frac{p\alpha _{0}+\left( 1-p\beta _{0}\right) \gamma
_{0}+p(N-1)}{p(N-1)- (p-1)\gamma _{0}}\right\} 
\]
and this condition ensures that we can fix $\xi \geq 0$ in such a way that $%
\alpha =\alpha _{0}+\xi \gamma _{0}$ and $\beta =\beta _{0}+\xi $ satisfy 
\[
\frac{1}{p}\leq \beta \leq 1\quad \text{and}\quad q_{1}>\max \left\{ p\beta
,p\frac{p(\alpha+N)-\left((p-1)\gamma _{0}+p\right) \beta }{p(N-1)-(p-1)\gamma_{0}}\right\} 
\]
(see Appendix). The result still follows from Lemma \ref{Lem(Omega0)}.
\endproof%

\section{Appendix}

This Appendix is devoted to complete the computations of the proof of Theorem \ref{THM3}.
We still distinguish the same cases considered there.\medskip

\noindent \emph{Case }$p<\gamma _{0}<N$.\smallskip

\noindent In this case, $\left( \alpha _{0},q_{1}\right) \in \mathcal{A}%
_{\beta _{0},\gamma _{0}}$ means 
\[
\begin{tabular}{l}
$\alpha _{0}>\max \left\{ \alpha _{2},\alpha _{3}\right\}$ \quad and \smallskip\\
$\max \left\{ 1,p\beta _{0}\right\} <q_{1}<\displaystyle\min \left\{ p\frac{\alpha
_{0}-\beta _{0}\gamma _{0}+N}{N-\gamma _{0}},p\frac{p\alpha _{0}+\left(
1-p\beta _{0}\right) \gamma _{0}+p(N-1)}{p(N-1)-(p-1)\gamma _{0}}\right\}.$
\end{tabular}
\]
This ensures that we can find $\xi \geq 0$ such that 
\[
\frac{1}{p}\leq \beta _{0}+\xi \leq 1\quad \text{and}\quad p\left( \beta
_{0}+\xi \right) <q_{1}<\frac{p^2 \left( \alpha _{0}+\xi \gamma _{0}\right)
+p^2 N-p\left( (p-1) \gamma _{0}+p\right) \left( \beta _{0}+\xi \right) }{p(N-1)-(p-1)\gamma
_{0}}, 
\]
i.e., 
\[
\begin{tabular}{l}
$\frac{1}{p}-\beta _{0}\leq \xi \leq 1-\beta _{0}$ \quad and \smallskip\\
$p\beta_{0}+p\xi <q_{1}<p\frac{\gamma _{0}-p}{p(N-1)-(p-1)\gamma _{0}}\xi 
+\frac{p^2 \alpha_{0}+p^2 N-p\left( (p-1)\gamma _{0}+p\right) \beta _{0}}{p(N-1)-(p-1)\gamma _{0}}.$
\end{tabular}
\]
Indeed, this amounts to find $\xi $ such that 
\[
\left\{ 
\begin{array}{l}
\max \left\{ 0,\frac{1-p\beta _{0}}{p}\right\} \leq \xi \leq 1-\beta
_{0}\medskip \\ 
\xi <\frac{q_{1}-p\beta _{0}}{p}\medskip \\ 
q_{1}-\frac{p^2 \alpha _{0}+p^2 N-p\left( (p-1)\gamma _{0}+p\right) \beta _{0}}{%
p(N-1)-(p-1)\gamma _{0}}<p\frac{\gamma _{0}-p}{p(N-1)-(p-1)\gamma _{0}}\xi ,
\end{array}
\right. 
\]
which, since $\frac{\gamma _{0}-p}{p(N-1)-(p-1)\gamma _{0}}>0$, is equivalent to 
\[
\left\{ 
\begin{array}{l}
\max \left\{ 0,\frac{1-p\beta _{0}}{p}\right\} \leq \xi \leq 1-\beta
_{0}\medskip \\ 
q_{1}\frac{p(N-1)-(p-1)\gamma _{0}}{p\left( \gamma _{0}-p\right) }-\frac{p\alpha
_{0}+pN-\left( (p-1)\gamma _{0}+p\right) \beta _{0}}{\gamma _{0}-p}<\xi <\frac{%
q_{1}-p\beta _{0}}{p}.
\end{array}
\right. 
\]
Since $\frac{1}{p}-\beta _{0}\leq 1-\beta _{0}$ is obvious (recall that $p>1$) and $1-\beta
_{0}\geq 0$ holds by assumption, such a system has a solution $\xi $ if and
only if 
\[
\left\{ 
\begin{array}{l}
\frac{1-p\beta _{0}}{p}<\frac{q_{1}-p\beta _{0}}{p}\medskip \\ 
q_{1}\frac{p(N-1)-(p-1)\gamma _{0}}{p\left( \gamma _{0}-p\right) }-\frac{p\alpha
_{0}+pN-\left( (p-1) \gamma _{0}+p\right) \beta _{0}}{\gamma _{0}-p}<1-\beta
_{0}\medskip \\ 
q_{1}\frac{p(N-1)-(p-1)\gamma _{0}}{p\left( \gamma _{0}-p\right) }-\frac{p\alpha
_{0}+pN-\left( (p-1) \gamma _{0}+p\right) \beta _{0}}{\gamma _{0}-p}<\frac{%
q_{1}-p\beta _{0}}{p}\medskip \\ 
\frac{q_{1}-p\beta _{0}}{p}>0,
\end{array}
\right. 
\]
which is equivalent to 
\[
\left\{ 
\begin{array}{l}
1<q_{1}\medskip \\ 
q_{1}<p\frac{p\alpha _{0}+p(N-1)+\left( 1-p\beta _{0}\right) \gamma _{0}}{%
p(N-1)-(p-1)\gamma _{0}}\medskip \\ 
q_{1}<p\frac{\alpha _{0}+N-\gamma _{0}\beta _{0}}{N-\gamma _{0}}%
\medskip \\ 
q_{1}>p\beta _{0}.
\end{array}
\right. 
\]

\noindent \emph{Case }$\gamma _{0}=N$.\smallskip

\noindent In this case, $\left( \alpha _{0},q_{1}\right) \in \mathcal{A}%
_{\beta _{0},\gamma _{0}}$ means 
\[
\begin{tabular}{l}
$\alpha _{0}>\alpha _{1}\,\left( =\alpha _{2}=\alpha _{3}\right)$\quad and \smallskip\\
$\max \left\{ 1,p\beta _{0}\right\} <q_{1}<\displaystyle
p\frac{p\alpha_{0}+\left( 1-p\beta _{0}\right) \gamma _{0}+p(N-1)}{p(N-1)-(p-1)\gamma _{0}}=p\frac{%
p\alpha _{0}+(p+1)N-p\beta _{0}N-p}{N-p}   $ 
\end{tabular}
\]
and this ensures that we can find $\xi \geq 0$ such that 
\[
\frac{1}{p}\leq \beta _{0}+\xi \leq 1\quad \text{and}\quad p\left( \beta
_{0}+\xi \right) <q_{1}<\frac{p^2 \left( \alpha _{0}+\xi \gamma _{0}\right)
+p^2 N-p\left( (p-1) \gamma _{0}+p \right) \left( \beta _{0}+\xi \right) }{p(N-1)-(p-1) \gamma
_{0}}, 
\]
i.e., 
\[
\frac{1}{p}-\beta _{0}\leq \xi \leq 1-\beta _{0}\quad \text{and}\quad p\beta
_{0}+p\xi <q_{1}<p\xi +\frac{p^2 \alpha _{0}+p^2 N-p\left( N (p-1)+p\right) \beta _{0}}{%
N-p}. 
\]
Indeed, this amounts to find $\xi $ such that 
\[
\left\{ 
\begin{array}{l}
\max \left\{ 0,\frac{1-p\beta _{0}}{p}\right\} \leq \xi \leq 1-\beta
_{0}\medskip \\ 
\frac{q_{1}}{p}-\frac{p\alpha _{0}+pN-\left((p-1) N+p\right) \beta _{0}}{N-p}<\xi
<\frac{q_{1}-p\beta _{0}}{p},
\end{array}
\right. 
\]
which has a solution $\xi $ if and only if 
\[
\left\{ 
\begin{array}{l}
\frac{1-p\beta _{0}}{p}<\frac{q_{1}-p\beta _{0}}{p}\medskip \\ 
\frac{q_{1}}{p}-\frac{p\alpha _{0}+pN-\left( N(p-1)+p\right) \beta _{0}}{N-p}%
<1-\beta _{0}\medskip \\ 
\frac{q_{1}}{p}-\frac{p\alpha _{0}+pN-\left( N(p-1)+p\right) \beta _{0}}{N-p}<%
\frac{q_{1}-p\beta _{0}}{p}\medskip \\ 
0<\frac{q_{1}-\beta _{0}}{p}.
\end{array}
\right. 
\]
These conditions are equivalent to 
\[
\left\{ 
\begin{array}{l}
1<q_{1}\medskip \\ 
q_{1}<p\frac{p\alpha _0+(p+1)N-p\beta_0 N-p}{N-p}  \medskip \\ 
0<\frac{p\alpha _{0}+pN-\left( N(p-1)+p\right) \beta _{0}}{N-p}-\beta _{0}=p\frac{%
\alpha _{0}+N\left( 1-\beta _{0}\right) }{N-p}=p\frac{\alpha _{0}-\alpha _{1}%
}{N-p}\medskip \\ 
p\beta _{0}<q_{1}.
\end{array}
\right. 
\]

\noindent \emph{Case }$N<\gamma _{0}<\frac{p}{p-1}(N-1)$.\smallskip 

\noindent In this case, $\left( \alpha _{0},q_{1}\right) \in \mathcal{A}%
_{\beta _{0},\gamma _{0}}$ means 
\[
\begin{tabular}{l}
$\alpha _{0}>\alpha _{1}$\quad and \smallskip\\
$\displaystyle\max \left\{ 1,p\beta _{0},p\frac{\alpha _{0}-\beta _{0}\gamma _{0}+N}{N-\gamma _{0}}\right\} <q_{1}<p%
\frac{p\alpha _{0}+\left( 1-p\beta _{0}\right) \gamma _{0}+p(N-1)}{p(N-1)-(p-1)\gamma_{0}}$
\end{tabular}
\]
and these conditions ensure that we can find $\xi \geq 0$ such that 
\[
\frac{1}{p}\leq \beta _{0}+\xi \leq 1\quad \text{and}\quad p\left( \beta
_{0}+\xi \right) <q_{1}<\frac{p^2 \left( \alpha _{0}+\xi \gamma _{0}\right)
+p^2 N-p\left( (p-1) \gamma _{0}+p\right) \left( \beta _{0}+\xi \right) }{(N-1)p-(p-1)\gamma
_{0}}, 
\]
i.e., 
\[
\begin{tabular}{l}
$\frac{1}{p}-\beta _{0}\leq \xi \leq 1-\beta _{0}$ \quad and \smallskip\\
$p\beta_{0}+p\xi <q_{1}<p\frac{\gamma _{0}-p}{p(N-1)-(p-1)\gamma _{0}}\xi 
+\frac{p^2 \alpha_{0}+p^2 N-p\left( (p-1)\gamma _{0}+p\right) \beta _{0}}{p(N-1)-(p-1)\gamma _{0}}.$
\end{tabular}
\]
Indeed, this is equivalent to find $\xi $ such that 
\[
\left\{ 
\begin{array}{l}
\max \left\{ 0,\frac{1-p\beta _{0}}{p}\right\} \leq \xi \leq 1-\beta
_{0}\medskip \\ 
\xi <\frac{q_{1}-p\beta _{0}}{p}\medskip \\ 
q_{1}-\frac{p^2 \alpha _{0}+p^2 N-p\left( (p-1)\gamma _{0}+p\right) \beta _{0}}{%
p(N-1)-(p-1)\gamma _{0}}<p\frac{\gamma _{0}-p}{p(N-1)-(p-1)\gamma _{0}}\xi ,
\end{array}
\right. 
\]
which, since $\frac{\gamma _{0}-p}{p(N-1)-(p-1)\gamma _{0}}>0$, amounts to 
\[
\left\{ 
\begin{array}{l}
\max \left\{ 0,\frac{1-p\beta _{0}}{p}\right\} \leq \xi \leq 1-\beta
_{0}\medskip \\ 
\xi <\frac{q_{1}-p\beta _{0}}{p}\medskip \\ 
q_{1}\frac{p(N-1)-(p-1)\gamma _{0}}{p\left( \gamma _{0}-p\right) }-\frac{p\alpha
_{0}+pN-\left( (p-1)\gamma _{0}+p\right) \beta _{0}}{\gamma _{0}-p} <\xi .
\end{array}
\right. 
\]
Such a system has a solution $\xi $ if and only if 
\[
\left\{ 
\begin{array}{l}
0<\frac{q_{1}-p\beta _{0}}{p}\medskip \\ 
\frac{1-p\beta _{0}}{p}<\frac{q_{1}-p\beta _{0}}{p}\medskip \\ 
q_{1}\frac{p(N-1)-(p-1)\gamma _{0}}{p\left( \gamma _{0}-p\right) }-\frac{p\alpha
_{0}+pN-\left( (p-1)\gamma _{0}+p\right) \beta _{0}}{\gamma _{0}-p}  <1-\beta
_{0}\medskip \\ 
q_{1}\frac{p(N-1)-(p-1)\gamma _{0}}{p\left( \gamma _{0}-p\right) }-\frac{p\alpha
_{0}+pN-\left( (p-1)\gamma _{0}+p\right) \beta _{0}}{\gamma _{0}-p} <\frac{%
q_{1}-p\beta _{0}}{p},
\end{array}
\right. 
\]
which is equivalent to 
\[
\left\{ 
\begin{array}{l}
q_{1}>p\beta _{0}\medskip \\ 
q_{1}>1\medskip \\ 
q_{1}<p\frac{p\alpha _{0}+pN-p\beta _{0}\gamma _{0}+\gamma _{0}-p}{%
p(N-1)-(p-1)\gamma _{0}}\medskip \\ 
-p\frac{\alpha _{0}+N-\beta _{0}\gamma _{0}}{\gamma _{0}-p}<q_{1}\frac{%
\gamma _{0}-N}{\gamma _{0}-p}.
\end{array}
\right. 
\]

\noindent \emph{Case }$\gamma _{0}=\frac{p}{p-1}(N-1)$\emph{.\smallskip }

\noindent In this case, $\left( \alpha _{0},q_{1}\right) \in \mathcal{A}%
_{\beta _{0},\gamma _{0}}$ means 
\[
\alpha _{0}>\alpha _{1}\quad \text{and}\quad q_{1}>\max \left\{ 1,p\beta
_{0},-p\frac{(\alpha _{0}+N)(p-1)-p(N-1)\beta_{0}}{N-p}\right\} . 
\]
This ensures that we can find $\xi \geq 0$ such that 
\[
\frac{1}{p}\leq \beta _{0}+\xi \leq 1,\quad q_{1}>p\left( \beta _{0}+\xi
\right) \quad \text{and}\quad p\left( \alpha _{0}+\xi \gamma _{0}\right)
+pN-\left( (p-1)\gamma _{0}+p\right) \left( \beta _{0}+\xi \right) >0, 
\]
i.e., 
\[
\frac{1}{p}-\beta _{0}\leq \xi \leq 1-\beta _{0},\quad q_{1}>p\beta
_{0}+p\xi \quad \text{and}\quad \alpha _{0}+N\left( 1-\beta _{0}\right)
+\frac{N-p}{p-1} \xi >0. 
\]
Indeed, this amounts to find $\xi $ such that 
\[
\left\{ 
\begin{array}{l}
\max \left\{ 0,\frac{1-p\beta _{0}}{p}\right\} \leq \xi \leq 1-\beta
_{0}\medskip \\ 
-\frac{(p-1) \left(\alpha _{0}+N\left( 1-\beta _{0}\right) \right)}{N-p}<\xi <\frac{%
q_{1}-p\beta _{0}}{p}
\end{array}
\right. 
\]
and such a system has a solution $\xi $ if and only if 
\[
\left\{ 
\begin{array}{l}
\frac{1-p\beta _{0}}{p}<\frac{q_{1}-p\beta _{0}}{p}\medskip \\ 
-\frac{(p-1) \left( \alpha _{0}+N\left( 1-\beta _{0}\right) \right)  }{N-p} <1-\beta _{0}\medskip
\\ 
-\frac{(p-1) \left( \alpha _{0}+N\left( 1-\beta _{0}\right) \right)  }{N-p}  <\frac{q_{1}-p\beta _{0}%
}{p}\medskip \\ 
0<\frac{q_{1}-p\beta _{0}}{p},
\end{array}
\right. 
\]
which means 
\[
\left\{ 
\begin{array}{l}
1<q_{1}\medskip \\ 
\alpha _{0}>-(N-1) \frac{p}{p-1} \left( 1-\beta _{0}\right) =\alpha
_{1}\medskip \\ 
\frac{q_{1}}{p}>\beta _{0}-\frac{(p-1) \left( \alpha _{0}+N\left( 1-\beta _{0}\right) \right) }{N-p}
=-\frac{ (p-1)n\left( \alpha _{0}+N \right) -p\left( N-1\right) \beta _{0}}{N-p}\medskip \\ 
p\beta _{0}<q_{1}.
\end{array}
\right. 
\]

\noindent \emph{Case }$\gamma _{0}>\frac{p}{p-1} (N-1)$.\smallskip

\noindent In this case, $\left( \alpha _{0},q_{1}\right) \in \mathcal{A}%
_{\beta _{0},\gamma _{0}}$ means 
\[
q_{1}>\max \left\{ 1,p\beta _{0},p\frac{\alpha _{0}-\beta _{0}\gamma _{0}+N}{%
N-\gamma _{0}},p\frac{p\alpha _{0}+\left( 1-p\beta _{0}\right) \gamma
_{0}+pN-p}{p(N-1)-(p-1)\gamma _{0}}\right\} 
\]
and this condition ensures that we can find $\xi \geq 0$ such that 
\[
\frac{1}{p}\leq \beta _{0}+\xi \leq 1\quad \text{and}\quad q_{1}>p\max
\left\{ \beta _{0}+\xi ,\frac{p\left( \alpha _{0}+\xi \gamma _{0}\right)
+pN-\left( (p-1)\gamma _{0}+p\right) \left( \beta _{0}+\xi \right) }{p(N-1)-(p-1)\gamma
_{0}}\right\} , 
\]
which amounts to find $\xi $ such that 
\begin{equation}
\left\{ 
\begin{array}{l}
\max \left\{ 0,\frac{1-p\beta _{0}}{p}\right\} \leq \xi \leq 1-\beta
_{0}\medskip \\ 
\frac{q_{1}}{p}>\max \left\{ \beta _{0}+\xi ,\frac{\gamma _{0}-p}{%
p(N-1)-(p-1)\gamma _{0}}\xi +\frac{p\alpha _{0}+pN-\left( (p-1)\gamma _{0}+p\right)
\beta _{0}}{p(N-1)-(p-1)\gamma _{0}}\right\} .
\end{array}
\right.  \label{ineq}
\end{equation}
In order to check this, we take into account that $\gamma _{0}>\frac{p}{p-1}(N-1)$ implies 
$\gamma _{0}>N$, and observe that 
\[
\beta _{0}+\xi =\frac{\gamma _{0}-p}{p(N-1)-(p-1)\gamma _{0}}\xi +\frac{p\alpha
_{0}+pN-\left( (p-1)\gamma _{0}+p\right) \beta _{0}}{p(N-1)-(p-1)\gamma _{0}}%
\Longleftrightarrow \xi =\frac{\alpha _{0}+\left( 1-\beta _{0}\right) N}{%
N-\gamma _{0}}. 
\]
Accordingly, we distinguish three subcases: \medskip 

\noindent (I) $\frac{\alpha _{0}+\left( 1-\beta _{0}\right) N}{N-\gamma _{0}}%
\geq 1-\beta _{0}$, i.e., $\alpha _{0}\leq -\gamma _{0}\left( 1-\beta
_{0}\right) =\alpha _{1}$;$\medskip $

\noindent (II) $\frac{\alpha _{0}+\left( 1-\beta _{0}\right) N}{N-\gamma _{0}%
}\leq \max \left\{ 0,\frac{1-p\beta _{0}}{p}\right\} $, i.e., 
\[
\alpha _{0}+\left( 1-\beta _{0}\right) N\geq \left( N-\gamma _{0}\right)
\max \left\{ 0,\frac{1-p\beta _{0}}{p}\right\} =\min \left\{ 0,\left(
N-\gamma _{0}\right) \frac{1-p\beta _{0}}{p}\right\} , 
\]
i.e., 
\[
\alpha _{0}\geq \min \left\{ 0,\left( N-\gamma _{0}\right) \frac{1-p\beta
_{0}}{p}\right\} -\left( 1-\beta _{0}\right) N=\min \left\{ \alpha
_{2},\alpha _{3}\right\} ; 
\]

\noindent (III) $\max \left\{ 0,\frac{1-p\beta _{0}}{p}\right\} <\frac{%
\alpha _{0}+\left( 1-\beta _{0}\right) N}{N-\gamma _{0}}<1-\beta _{0}$,
i.e., $\alpha _{1}<\alpha _{0}<\min \left\{ \alpha _{2},\alpha _{3}\right\}
.\medskip $

\noindent \emph{Subcase (I).\smallskip }

\noindent Since $\xi \leq 1-\beta _{0}$ implies 
\begin{eqnarray*}
&& 
\max \left\{ \beta _{0}+\xi ,\frac{\gamma _{0}-p}{p(N-1)-(p-1)\gamma _{0}}\xi 
+ \frac{p\alpha _{0}+pN-\left( (p-1)\gamma _{0}+p\right) \beta _{0}}{p(N-1)-(p-1)\gamma_{0}}\right\} \\
& = &
\frac{\gamma _{0}-p}{p(N-1)-(p-1)\gamma _{0}}\xi +\frac{p\alpha_{0}
+ pN-\left( (p-1) \gamma _{0}+p\right) \beta _{0}}{p(N-1)-(p-1)\gamma _{0}},
\end{eqnarray*}
the inequalities (\ref{ineq}) become 
\[
\left\{ 
\begin{array}{l}
\max \left\{ 0,\frac{1-p\beta _{0}}{p}\right\} \leq \xi \leq 1-\beta
_{0}\medskip \\ 
\frac{q_{1}}{p}>\frac{\gamma _{0}-p}{p(N-1)-(p-1)\gamma _{0}}\xi +\frac{p\alpha
_{0}+pN-\left( (p-1)\gamma _{0}+p\right) \beta _{0}}{p(N-1)-(p-1)\gamma _{0}},
\end{array}
\right. 
\]
i.e., 
\[
\left\{ 
\begin{array}{l}
\max \left\{ 0,\frac{1-p\beta _{0}}{p}\right\} \leq \xi \leq 1-\beta
_{0}\medskip \\ 
q_{1}\frac{p(N-1)-(p-1)\gamma _{0}}{p\left( \gamma _{0}-p\right) }-\frac{p^2 \alpha
_{0}+p^2 N-p\left( (p-1)\gamma _{0}+p\right) \beta _{0}}{p\left( \gamma
_{0}-p\right) } <\xi ,
\end{array}
\right. 
\]
which, since $\max \left\{ 0,\frac{1-p\beta _{0}}{p}\right\} \leq 1-\beta
_{0}$ is clearly true, has a solution $\xi $ if and only if 
\[
q_{1}\frac{p(N-1)-(p-1)\gamma _{0}}{p\left( \gamma _{0}-p\right) }-\frac{p^2 \alpha
_{0}+p^2 N-p\left( (p-1)\gamma _{0}+p\right) \beta _{0}}{p\left( \gamma
_{0}-p\right) }  <1-\beta _{0}, 
\]
i.e., 
\begin{eqnarray*}
q_{1}&>&\frac{p^2\alpha _{0}+p^2N-p\left( (p-1)\gamma _{0}+p\right) \beta _{0}
+p\left(\gamma _{0}-p\right) \left( 1-\beta _{0}\right) }{p(N-1)-(p-1)\gamma _{0}}\\
&=&
p\frac{p\alpha _{0}+pN-p+\gamma _{0}\left( 1-p\beta _{0}\right) }{p(N-1)-(p-1)\gamma _{0}}. 
\end{eqnarray*}

\noindent \emph{Subcase (II).\smallskip }

\noindent Since $\xi \geq \max \left\{ 0,\frac{1-p\beta _{0}}{p}\right\} $
implies $\max \left\{ \beta _{0}+\xi ,\frac{\gamma _{0}-p}{p(N-1)-(p-1)\gamma _{0}}%
\xi +\frac{p\alpha _{0}+pN-\left( (p-1)\gamma _{0}+p\right) \beta _{0}}{%
p(N-1)-(p-1)\gamma _{0}}\right\} =\beta _{0}+\xi $, the inequalities (\ref{ineq})
become 
\[
\left\{ 
\begin{array}{l}
\max \left\{ 0,\frac{1-p\beta _{0}}{p}\right\} \leq \xi \leq 1-\beta
_{0}\medskip \\ 
\xi <\frac{q_{1}}{p}-\beta _{0},
\end{array}
\right. 
\]
which has a solution $\xi $ if and only if $\max \left\{ 0,\frac{1-p\beta
_{0}}{p}\right\} \leq \frac{q_{1}}{p}-\beta _{0}$, i.e., $q_{1}>\max \left\{
1,p\beta _{0}\right\} $.$\medskip $

\noindent \emph{Subcase (III).\smallskip }

\noindent We take $\xi =\frac{\alpha _{0}+\left( 1-\beta _{0}\right) N}{%
N-\gamma _{0}}$ and thus the inequalities (\ref{ineq}) are equivalent just
to 
\[
\frac{q_{1}}{p}>\beta _{0}+\frac{\alpha _{0}+\left( 1-\beta _{0}\right) N}{%
N-\gamma _{0}}=\frac{\alpha _{0}+N-\gamma _{0}\beta _{0}}{N-\gamma _{0}}. 
\]

\end{document}